\newcommand{\setu}{{\mathrm{\mathfrak{u}}}}
\newcommand{\pd}[3]{\frac{\partial ^{#1} #2}{\partial #3}}
\newcommand{\bigO}{\mathcal{O}}
\newcommand{\eps}{\epsilon}
\DeclareMathOperator{\supp}{supp}
\DeclareMathOperator{\ind}{ind}
\DeclareMathOperator{\dottimes}{\mathbin{{.}{\ast}}}
\newcommand{\satop}[2]{\stackrel{\scriptstyle{#1}}{\scriptstyle{#2}}}
\newcommand{\N}[0]{\mathbb{N}}
\newcommand{\Z}[0]{\mathbb{Z}}
\newcommand{\R}[0]{\mathbb{R}}
\newcommand{\bszero}{{\boldsymbol{0}}}
\newcommand{\bsone}{{\boldsymbol{1}}}
\newcommand{\bsb}{{\boldsymbol{b}}}
\newcommand{\bsc}{{\boldsymbol{c}}}
\newcommand{\bse}{{\boldsymbol{e}}}
\newcommand{\bsq}{{\boldsymbol{q}}}
\newcommand{\bsv}{{\boldsymbol{v}}}
\newcommand{\bsw}{{\boldsymbol{w}}}
\newcommand{\bsx}{{\boldsymbol{x}}}
\newcommand{\bsy}{{\boldsymbol{y}}}
\newcommand{\bsz}{{\boldsymbol{z}}}
\newcommand{\bsalpha}{{\boldsymbol{\alpha}}}
\newcommand{\bsgamma}{{\boldsymbol{\gamma}}}
\newcommand{\bseta}{{\boldsymbol{\eta}}}
\newcommand{\bsnu}{{\boldsymbol{\nu}}}
\newcommand{\bsrho}{\boldsymbol{\rho}}
\newcommand{\bspsi}{\boldsymbol{\psi}}
\newcommand{\bsDelta}{{\boldsymbol{\Delta}}}
\newcommand{\bsPhi}{\boldsymbol{\Phi}}
\newcommand{\calA}{\mathcal{A}}
\newcommand{\calF}{\mathcal{F}}
\newcommand{\calG}{\mathcal{G}}
\newcommand{\calH}{\mathcal{H}}
\newcommand{\calO}{\mathcal{O}}
\newcommand{\rd}{\mathrm{d}}
\newcommand{\rmN}{\mathrm{N}}
\newcommand{\bbP}{\mathbb{P}} 
\newcommand{\bbE}{\mathbb{E}}
\newcommand{\bbR}{\mathbb{R}}
\newcommand{\bbN}{\mathbb{N}}
\newcommand{\cbar}[0]{\overline{c}}
\newcommand{\ubar}[0]{\overline{u}}
\newcommand{\ellbar}[0]{\overline{\ell}}
\newcommand{\phibar}[0]{\overline{\varphi}}
\newcommand{\uhat}[0]{\widehat{u}}
\newcommand{\amin}[0]{a_{\mathrm{min}}}
\newcommand{\amax}[0]{a_{\mathrm{max}}}
\definecolor{darkred}{RGB}{139,0,0}
\definecolor{darkgreen}{RGB}{0,100,0}
\definecolor{darkmagenta}{RGB}{170,0,120}
\definecolor{darkpurple}{RGB}{110,0,180}
\definecolor{darkblue}{RGB}{40,0,200}
\definecolor{darkbrown}{rgb}{0.75,0.40,0.15}
\newtheorem{theorem}{Theorem}
\newtheorem{lemma}[theorem]{Lemma}
\newtheorem{corollary}[theorem]{Corollary}
\newenvironment{proof}{\begin{trivlist}
    \item[\hskip\labelsep{\bf Proof.}]}{$\hfill\Box$\end{trivlist}}
\newtheorem{assumption}{Assumption}
\theoremstyle{plain} \theorembodyfont{\rmfamily}
\newtheorem{remark}[theorem]{Remark}
\newcommand{\genvec}{\boldsymbol{\bsz}_\mathrm{gen}}
\numberwithin{equation}{section}
\let\@fnsymbol\@arabic
\title{Density estimation for elliptic PDE with random input
by preintegration and quasi-Monte Carlo methods}
\date{\today}
\author{Alexander D. Gilbert\footnotemark[1] \and
             Frances Y. Kuo\footnotemark[1] \and
	     Abirami Srikumar\footnotemark[1]
	     }
\begin{document}
\maketitle

\footnotetext[1]{School of Mathematics and Statistics, UNSW Sydney, Sydney NSW 2052, Australia.\\
                           \texttt{alexander.gilbert@unsw.edu.au},\;
                           \texttt{f.kuo@unsw.edu.au},
                           \texttt{a.srikumar@student.unsw.edu.au}
                           }

\begin{abstract}
In this paper, we apply quasi-Monte Carlo (QMC) methods with an initial preintegration step to estimate cumulative distribution functions and probability density functions in uncertainty quantification (UQ). The distribution and density functions correspond to a quantity of interest involving the solution to an elliptic partial differential equation (PDE) with a lognormally distributed coefficient and a normally distributed source term. 
There is extensive previous work on using QMC to compute expected values in UQ, which have proven very successful in tackling a range of different PDE problems.
However, the use of QMC for density estimation applied to UQ problems will be explored here for the first time. Density estimation presents a more difficult challenge compared to computing the expected value due to discontinuities present in the integral formulations of both the distribution and density. Our strategy is to use preintegration to eliminate the discontinuity by integrating out a carefully selected random parameter, so that QMC can be used to approximate the remaining integral.  First, we establish regularity results for the PDE quantity of interest that
are required for smoothing by preintegration to be effective.
We then show that an $N$-point lattice rule can be constructed for the integrands corresponding to the distribution and density, such that after preintegration the QMC error is of order $\calO(N^{-1+\epsilon})$ for arbitrarily small $\epsilon>0$. This is the same rate achieved for computing the expected value of the quantity of interest. Numerical results are presented to reaffirm our theory.

\end{abstract}

\section{Introduction}\label{sec:Intro}

Quasi-Monte Carlo (QMC) methods have recently had great success in
tackling high-dimensional problems in uncertainty quantification (UQ).
While most of the research has focussed on computing the expected value of
some quantity of interest involving the solution to a partial
differential equation (PDE) with random input, see, e.g.,
\cite{GanKS21,GGrKSS19,GrKNSSS15,GrKNSS11,HerSchw19,Kaz19,KSS12},
the task of computing other statistical objects for the quantity of
interest has received less
attention. In this paper, we
introduce a method to approximate efficiently the \emph{cumulative
distribution function} (cdf) and \emph{probability density function} (pdf)
of the quantity of interest for an elliptic PDE with lognormally and normally distributed random
inputs. 

Let $D \subset \R^d$, for $d = 1, 2$ or $3$, be a bounded convex domain and
let $(\Omega, \calF, {\bbP})$ be a probability space.
Consider the
stochastic elliptic PDE
	\begin{align}
		\label{eq:pde_0}
		-\nabla \cdot (a(\bsx, \omega)\nabla u(\bsx, \omega)) \,&=\, \ell(\bsx, \omega)\,,
		&\bsx \in D\,,
		\\\nonumber
		u(\bsx, \omega)\,&=\, 0\,,
		&\bsx \in \partial D\,,
	\end{align}
where the divergence and gradient are with respect to the \emph{physical} variable $\bsx
\in D$, and where $\omega \in \Omega$ is a \emph{random} variable
modelling the uncertainty in both the diffusion coefficient $a$ and the
source term $\ell$. The random variable $\omega$ modelling the uncertainty
will be specified formally below, but for now we note that it will
typically be represented by a high number of parameters and that the
randomness in the coefficient and the source term are assumed to be
independent.

Our quantity of interest is a random variable $X \in \R$ given by a bounded linear functional $\calG$
of the solution to the PDE \eqref{eq:pde_0},
\begin{equation}
\label{eq:X}
X  \,=\, \calG(u(\cdot, \omega))\,.
\end{equation}
The cdf and pdf of $X$ are denoted by $F$ and $f$, respectively, and for
$t \in [t_0, t_1] \subset\bbR$,
they can be formulated as the expected values
\begin{align}
\label{eq:cdf0}
F(t) \,&=\, \bbP[X \leq t] \,=\, \bbE_\omega [\ind(t - X)]
\,=\, \int_\Omega \ind(t - \calG(u(\cdot, \omega))) \, \rd \bbP(\omega),
\quad \text{and}\\
\label{eq:pdf0}
f(t) \,&=\, \bbE_\omega [\delta(t - X)]
\,=\,\int_\Omega \delta(t - \calG(u(\cdot, \omega))) \, \rd \bbP(\omega),
\end{align}
where $\ind$ is the indicator function and
$\delta$ is the Dirac $\delta$ distribution, defined by
\[
\delta(z) \,=\, 0 \quad \text{for all } z \neq 0
\quad \text{and} \quad
\int_{-\infty}^\infty \phi(y) \delta(y) \, \rd y \,=\, \phi(0)
\]
for all sufficiently smooth functions $\phi$.

The strategy for approximating the cdf $F$ and the pdf $f$ is to first use
preintegration to smooth the discontinuity in each integrand,
induced by the indicator and Dirac $\delta$ functions, respectively,
and then to approximate the remaining integral
by a QMC rule.

Motivated by the case where the diffusion coefficient and the source
term are given by \emph{independent} Gaussian random fields with separate
Karhunen--Lo\'eve expansions, we assume that $a$ is a \emph{lognormal}
coefficient and $\ell$ admits a finite affine series expansion. Let $s \in \N$
and let $\bsw = (w_0, w_1, \ldots, w_s) \in \R^{s + 1}$ be a vector
of i.i.d.~normally distributed random variables, i.e., $w_i \sim
\rmN(0, 1)$. We assume that the source term is given by
\begin{align} \label{eq:lxw}
 \ell(\bsx, \omega) \,\equiv\, \ell(\bsx, \bsw)
 \,=\, \ellbar(\bsx) + \sum_{i = 0}^s w_i\, \ell_i(\bsx),
\end{align}
where $\ellbar, \ell_i \in L^2(D)$ are known and deterministic. The condition that $\ellbar,\ell_i \in L^2(D)$ will be relaxed later. Similarly,
let $\bsz = (z_1, z_2,\ldots, z_s) \in \R^s$ be a vector of
i.i.d.~normally distributed random variables, i.e., $z_j \sim \rmN(0, 1)$.
We assume that the coefficient $a$ is given by
\begin{align} \label{eq:axz}
 a(\bsx, \omega) \,\equiv\, a(\bsx, \bsz)
 \,=\, 
 \exp\bigg(
 \sum_{j = 1}^s z_j\, a_j(\bsx)\bigg),
\end{align}
where
$a_j \in L^\infty(D)$
are known and deterministic. However, the requirements on $a_j$ will be strengthened as necessary later in the analysis. In this formulation, the coefficient and the
source term are independent and the random variable $\omega$ is given as a
collection of normally distributed parameters $w_0, w_1, \ldots, w_s$ and
$z_1, \ldots, z_s$. Note that $w_i$ is indexed from $0$ to $s$ while
$z_j$ is indexed from $1$ to $s$, for the sake of convenience when we
discuss preintegration below.

To simplify the notation, we write the parameters in a single random
vector $\bsy \in \R^{2s + 1}$ with
\[
 \bsy = (y_0, y_1, \ldots, y_s, y_{s + 1}, \ldots, y_{2s})
 = (w_0, w_1, \ldots, w_s, z_1, \ldots, z_s) = (\bsw,\bsz) \in \bbR^{2s+1}.
\]
We also write the PDE solution in terms of $\bsy$ as $u(\bsx, \bsy)$ and
then we write the quantity of interest \eqref{eq:X} as
\begin{align} \label{eq:qoi}
  X = \calG(u(\cdot,\bsy)).
\end{align}
In this way, the expected values in the cdf \eqref{eq:cdf0} and pdf
\eqref{eq:pdf0} can be written as $(2s + 1)$-dimensional integrals with
respect to a product Gaussian density,
\begin{align}
\label{eq:cdf-y0}
F(t) \,&=\, \int_{\R^{2s + 1}} \ind(t - \calG(u(\cdot, \bsy)))\,
\bigg(\prod_{i = 0}^{2s} \rho(y_i) \bigg)\, \rd \bsy,
\quad \text{and} \\
\label{eq:pdf-y0}
f(t) \,&=\, \int_{\R^{2s + 1}} \delta(t - \calG(u(\cdot, \bsy))) \,
\bigg(\prod_{i= 0}^{2s} \rho(y_i)\bigg)\, \rd \bsy.
\end{align}
where $\rho(y) = \tfrac{1}{\sqrt{2\pi}}e^{-y^2/2}$ denotes the standard normal density.

The preintegration strategy is to first integrate out the variable $y_0 =
w_0$, so that evaluating the cdf (or pdf) becomes an integral of a
(hopefully smooth) function over the remaining $2s$ variables $\bsy_{1:2s}
= (\bsw_{1:s},\bsz)$. For convenience, later we will often abuse the
notation and write $\bsy$ (or $\bsw$) to denote the vector without the
initial variable $y_0$ (or $w_0$). There shall be no ambiguity from the
context whether $\bsy$ (or $\bsw$) includes the variable $y_0$ (or $w_0$).
We will make a critical assumption that
\begin{equation*}
  \ell_0(\bsx) > 0 \quad\mbox{for all}\quad \bsx\in D,
\end{equation*}
which ensures that the quantity of interest \eqref{eq:qoi} is strictly monotone with respect to the preintegration variable $y_0$, i.e.,
\begin{equation}
\label{eq:mono}
\frac{\partial X}{\partial y_0} \,=\, \frac{\partial}{\partial y_0} \calG(u(\cdot, \bsy)) \,>\, 0
\quad \text{for all } \bsy \in \R^{2s + 1}.
\end{equation}
This \emph{monotonicity property} \eqref{eq:mono} is necessary for the preintegration theory (to be explained later in the article).

Preintegration is a special case of \emph{conditional Monte Carlo}, or
\emph{conditional sampling} when more general quadrature rules are used,
and as a numerical method it applies to more general non-smooth integrands
than those in \eqref{eq:cdf-y0}, \eqref{eq:pdf-y0}. Conditional sampling
is more general in that it allows one to condition on (or integrate out)
partial information that is not restricted to a single variable. It has
been used extensively as a smoothing and variance reduction technique in
statistics and computational finance
\cite{ACN13a,ACN13b,BayB-HTemp22,Glasserman,GlaSta01,Hol11,LecLem00,LiuOw23,WWH17}.
The smoothing effect of preintegration was analysed theoretically in
\cite{GKLS18}, which builds upon the prior work \cite{GKS10,
GKS13,GKS17note}. More recently, the use of conditional sampling for
density estimation was introduced in \cite{LecPucBAb22} and the special
case of preintegration for density estimation was analysed in
\cite{GKS23}, with the latter paper \cite{GKS23} providing the setting
followed here. A key requirement for the smoothing by preintegration theory
from \cite{GKS23,GKLS18} is the monotonicity condition \eqref{eq:mono},
which was shown in \cite{GKS22b} to be necessary in the sense that if it does not hold then 
after preintegration there may still exist singularities.

There are several natural questions about how the methodology and analysis
in this paper may be extended, all of which essentially come back to 
the requirements for the preintegration theory and, in particular, the monotonicity condition \eqref{eq:mono}.
First, the method in this paper can be extended to 
a nonlinear function of the source term $\ell(\bsx, \omega)$ \eqref{eq:lxw},
provided that the function is monotone so that \eqref{eq:mono} continues to hold.
Second, if the source term is deterministic then the general idea of the method can be extended
by applying preintegration to a variable in the coefficient \eqref{eq:axz}.
However, in this case the monotonicity condition \eqref{eq:mono} would need to be proved
in order for the smoothing by preintegration theory to apply.
This condition would need to be shown to hold on a case-by-base basis depending
on the coefficient and the quantity of interest, and it
is an interesting direction for future research.
For example, if $a_1(\bsx)$ in \eqref{eq:axz} is constant, then it can be shown that the quantity of interest is
monotone with respect to $z_1$.
Note that by dividing the PDE \eqref{eq:pde_0} through 
by $e^{a_1 z_1}$ this example fits the setting described above of a monotone nonlinear source term.
Third, motivated by applications to modelling fluid flow through porous media where Gaussian
random fields are used to represent the uncertain properties of the medium, we
have considered a source term and coefficient of the form \eqref{eq:lxw}--\eqref{eq:axz},
depending on normally-distributed random variables $y_j$.
Our method can be extended to other stochastic models for the uncertain parameters,
provided that each random variable $y_j$ has full support on $\R$ and its density is sufficiently
smooth (so that the preintegration theory holds, see \cite{GKS23,GKLS18}).
However, our method \emph{cannot} be extended to the simpler affine uniform case as in, e.g., \cite{KSS12},
because in this case each $y_j \in [0, 1]$ is bounded and for bounded domains
a single preintegration step is not sufficient to recover the necessary smoothness.
For further details see \cite{GKS10} and Appendix A2 in \cite{GKS23}.
For further details on the first two points above see Remark~\ref{rem:extensions} below.

In practice, to evaluate the quantity of interest \eqref{eq:qoi} the PDE
\eqref{eq:pde_0} must be solved numerically by discretising the spatial
domain, e.g., by a finite element method. Preintegration 
for the discrete problem requires a considerable amount of additional theory so will be discussed in depth in a subsequent paper \cite{Gil24}. However, for our
numerical results we still use piecewise linear finite elements.

The structure of this paper is as follows: In Section~\ref{sec:preint}, we
introduce the background theory on smoothing by preintegration, including
the function space setting, preintegration for density estimation and QMC
methods for high-dimensional integration. In Section~\ref{sec:pde},
we introduce the PDE with random input, along with our key assumptions and
some important properties that will be used throughout the paper.
Section~\ref{sec:de4pde} introduces our method for approximating the cdf
and pdf of a quantity of interest coming from a PDE with random input.
Then in Section~\ref{sec:cond_preint}, we verify that the PDE quantity of
interest satisfies the conditions required for the smoothing by
preintegration theory to apply. In particular, we prove that \eqref{eq:mono} holds.
Following this, a full error analysis
of our method is presented in Section~\ref{sec:error}.
Section~\ref{sec:num} presents numerical results that support the
theoretical results from our error analysis and Section~\ref{sec:conc} provides concluding remarks.

\section{Smoothing by preintegration}
\label{sec:preint}

In this section we summarise the results from \cite{GKS23} for estimating
the cdf and pdf of a real-valued random variable $X = \varphi (\bsy)$,
which are given by
\begin{align}
\label{eq:cdf-y}
F(t) \,&=\, \int_{\R^{2s + 1}} \ind(t - \varphi(\bsy))\,
\bigg(\prod_{i = 0}^{2s} \rho(y_i) \bigg)\, \rd \bsy
\quad \text{and} \\
\label{eq:pdf-y}
f(t) \,&=\, \int_{\R^{2s + 1}} \delta(t - \varphi(\bsy))\,
\bigg(\prod_{i= 0}^{2s} \rho(y_i)\bigg)\, \rd \bsy,
\end{align}
for a generic function $\varphi$ (ignoring the specific PDE problem for
now), where the components $y_i$ are i.i.d.\ normally distributed
random variables. The strategy is to preintegrate the non-smooth
integrands in \eqref{eq:cdf-y} and \eqref{eq:pdf-y} with respect to $y_0$
to obtain functions in $2s$ variables $\bsy = (y_1,\ldots,y_{2s}) \in
\bbR^{2s}$:
\begin{align}
  g_{\rm cdf}(\bsy) &\,\coloneqq\, \int_{-\infty}^\infty \ind(t - \varphi(y_0,\bsy)) \rho(y_0)\,\rd y_0\,, \label{eq:g-cdf}
  \quad\mbox{and} \\
  g_{\rm pdf}(\bsy) &\,\coloneqq\, \int_{-\infty}^\infty \delta(t - \varphi(y_0,\bsy)) \rho(y_0)\,\rd y_0\,. \label{eq:g-pdf}
\end{align}
Then the cdf \eqref{eq:cdf-y} and pdf \eqref{eq:pdf-y} become
$2s$-dimensional integrals of the functions $g_{\rm cdf}$ and $g_{\rm
pdf}$,
\begin{equation}
\label{eq:preint-cdfpdf}
  F(t) = \int_{\bbR^{2s}} g_{\rm cdf}(\bsy)\bigg(\prod_{i = 1}^{2s} \rho(y_i)\bigg)\, \rd \bsy
  \quad\mbox{and}\quad
  f(t) = \int_{\bbR^{2s}} g_{\rm pdf}(\bsy)\bigg(\prod_{i = 1}^{2s} \rho(y_i)\bigg)\, \rd \bsy.
\end{equation}
It is proved in \cite{GKS23} that $g_{\rm cdf}$ \eqref{eq:g-cdf} and
$g_{\rm pdf}$ \eqref{eq:g-pdf} will be sufficiently smooth under
appropriate conditions, and therefore the $2s$-dimensional integrals can
be tackled successfully by QMC rules.

We start by introducing the function spaces needed to quantify smoothness
and then summarise the results from \cite{GKS23}.

\subsection{Function spaces} \label{sec:function_spaces}

We only define the $(2s + 1)$-variate spaces, since the
$2s$-variate spaces can be defined analogously by simply excluding the
variable $y_0$.

Let $\bbN_0 \coloneqq \{0,1,2,\ldots\}$. For $i = 0, 1, \ldots, 2s$ and a
multi-index $\bsnu \in \N_0^{2s + 1}$, let
\[
 \partial^i \coloneqq \pd{}{}{y_i}
 \qquad\mbox{and}\qquad
 \partial^\bsnu \,\coloneqq\, \prod_{i = 0}^{2s} \pd{\nu_i}{}{y_i^{\nu_i}}
\]
be the first-order derivative and the higher-order mixed derivative of
order $\bsnu$, respectively. This notation is used for derivatives
with respect to $\bsw = (y_0, y_1, \ldots, y_s)$ and $\bsz = (y_{s + 1},
y_{s + 2}, \ldots, y_{2s})$ as well. The $\bsnu$th weak derivative of a function
$\phi:\bbR^{2s+1}\to\bbR$ is the distribution $\partial^\bsnu  \phi$ that
satisfies
\begin{align*} 
 \int_{\R^{2s + 1}} & \partial^\bsnu  \phi(\bsy)\, v(\bsy) \,\rd \bsy
 \,=\, (-1)^{|\bsnu|} \int_{\R^{2s + 1}} \phi(\bsy)\, \partial^\bsnu  v(\bsy) \,\rd \bsy
 \quad \text{for all } v \in C^\infty_0(\R^{2s + 1}),
\end{align*}
where $C^\infty_0(\R^{2s + 1})$
is the space of infinitely differentiable functions with compact support.

For $\bsnu \in \N_0^{2s + 1}$, let $C^\bsnu(\R^{2s + 1})$ denote the space
of functions with continuous \emph{mixed} (classical) derivatives up
to order $\bsnu$:
\[
C^\bsnu(\R^{2s + 1}) \coloneqq \big\{\phi \in C(\R^{2s + 1}) :
\partial^{\bseta}  \phi \in C(\R^{2s + 1})
\text{ for all } \bseta \leq \bsnu\}\,,
\]
where $C(\R^{2s + 1})$ denotes the space of continuous functions on
$\R^{2s + 1}$.

Moreover, for $\bsnu \in \N_0^{2s + 1}$ we define the \emph{Sobolev space
of dominating mixed smoothness} of order~$\bsnu$, denoted by
$\calH^\bsnu_{2s + 1}$, to be the space of locally integrable functions on
$\R^{2s + 1}$ such that the norm
\begin{equation*}
\|\phi\|_{\calH^\bsnu_{2s + 1}}^2 \coloneqq
\sum_{\bseta \leq \bsnu} \frac{1}{\gamma_\bseta}
\int_{\R^{2s  +1}} |\partial^{\bseta}  \phi(\bsy)|^2\,
\bspsi_{\bseta}(\bsy_\bseta)\,\bsrho_{-\bseta}(\bsy_{-\bseta})
 \,\rd \bsy
\end{equation*}
is finite, where we introduced the shorthand notations
\[
  \gamma_\bseta \coloneqq \gamma_{\supp(\bseta)}, \quad
  \bspsi_{\bseta}(\bsy_\bseta) \coloneqq \prod_{i=0,\,\eta_i \neq 0}^{2s}
  \psi(y_i)
  \quad\mbox{and}\quad
  \bsrho_{-\bseta}(\bsy_{-\bseta}) \coloneqq \prod_{i=0,\,\eta_i = 0}^{2s}
  \rho(y_i),
\]
where $\supp(\bseta) \coloneqq \{ i\in \{0:2s\} : \eta_i\ne 0\}$ is the set
of indices for the nonzero components of $\bseta$. Here the behaviour of
derivatives as $y_i \to \pm \infty$ is controlled by a strictly positive and
integrable \emph{weight function} $\psi : \R \to \R$.
Also, here $\bsgamma \coloneqq \{\gamma_\setu > 0 : \setu \subseteq
\{0:2s\}\}$ is a collection of positive real numbers called \emph{weight
parameters}; they model the relative importance of different collections
of variables, i.e., $\gamma_\setu$ describes the relative importance of
the collection of variables $(y_i : i \in \setu)$. We set
$\gamma_\emptyset \coloneqq 1$.
We define analogously the $2s$-variate Sobolev space $\calH_{2s}^\bsnu$,
with variables indexed from $1$ to $2s$.

An important property of the Sobolev space of \emph{first-order}
dominating mixed smoothness, i.e., $\calH^{\bsone}_{2s}$ with $\bsone
\coloneqq (1, 1, \ldots, 1)$, is that it is equivalent to the
unanchored space over the unbounded domain $\R^{2s}$ introduced in
\cite{NK14}. Explicitly, it was shown recently in \cite{GKS22a} that if
the weight function $\psi$ satisfies
\begin{equation}
\label{eq:psi}
\int_{-\infty}^\infty \frac{\Phi(y)(1 - \Phi(y))}{\psi(y)} \, \rd y \,< \, \infty,
\end{equation}
where $\Phi$ is the cdf of $\rho$, then $\calH^{\bsone}_{2s}$ and the
unanchored space from \cite{NK14} are equivalent. This equivalence is
crucial as it immediately shows that the bounds on the QMC error from
\cite{NK14} also hold in $\calH^{\bsone}_{2s}$. Examples of common
pairings $(\rho, \psi)$ satisfying \eqref{eq:psi} can be found in
\cite[Table~3]{KSWW10}. Note that for simplicity, $\psi_i^2$ in
\cite{KSWW10,NK14}, and also in \cite{GKS22a,GKS23}, is replaced here by
a single $\psi$. The analysis in this paper will consider two types of
weight functions, exponential and Gaussian. Exponential weight functions
are of the form
\begin{equation}
\label{eq:psi-exp}
\psi(y) \,=\, \exp(-2\, \mu\, |y|) \quad  \text{for }  \mu > 0,
\end{equation}
whereas Gaussian weight functions have the form
\begin{equation}
\label{eq:psi-Gauss}
\psi(y) \,=\, \exp(-  \mu\, y^2)\quad  \text{for }  \mu \in (0, 1/2).
\end{equation}
Both choices of weight functions satisfy the condition \eqref{eq:psi}.

\subsection{Preintegration theory for density estimation}
\label{sec:preint-theory}

In this section we briefly summarise the theory on using smoothing by
preintegration for density estimation from \cite{GKS23}. First, we make
the following assumptions on $\varphi$ and~$\rho$.

\begin{assumption} \label{asm:phi}
For $s\ge 1$ and $\bsnu\in \bbN_0^{2s}$, let $\varphi :
\bbR^{2s+1}\to\bbR$ in \eqref{eq:g-cdf} and \eqref{eq:g-pdf} satisfy
\begin{enumerate}[label=\textnormal{(\alph*)}] 
\item \label{itm:mono} $ \partial^{0} \varphi(y_0, \bsy) > 0$ for all
    $ (y_0, \bsy) \in \R^{2s + 1}$;
\item\label{itm:inf} for each $\bsy \in \R^{2s}$, $\varphi(y_0, \bsy) \to \infty$ as
    $y_0 \to \infty$; and
\item\label{itm:phi-smooth} $\varphi \in \calH^{(\nu_0, \bsnu)}_{2s + 1} \cap C^{(\nu_0,
    \bsnu)}(\R^{2s + 1})$, where $\nu_0 \coloneqq |\bsnu|+1$.
\end{enumerate}
Additionally, suppose that $\rho \in C^{|\bsnu|}(\bbR)$.
\end{assumption}

Assumptions~\ref{asm:phi}\ref{itm:mono}--\ref{itm:inf} imply that
$\varphi$ is strictly increasing with respect to $y_0\in\bbR$ and also
tends to $\infty$ as $y_0\to\infty$. Consequently, for fixed
$\bsy\in\bbR^{2s}$, the discontinuity induced by the indicator function in
\eqref{eq:cdf-y} either occurs at a unique value of $y_0$ or not at all. For
a given $t\in [t_0,t_1]$, we define the set of $\bsy\in\bbR^{2s}$ for
which the discontinuity occurs by
\begin{align} \label{eq:Ut}
  U_t \,\coloneqq\, \{\bsy\in\bbR^{2s} \,:\, \varphi(y_0,\bsy) = t
  \mbox{ for some } y_0 \in \bbR\}.
\end{align}
Then, the implicit function theorem \cite[Theorem~3.1]{GKS23} states
that if the set
\begin{equation}
\label{eq:V}
R \coloneqq \big\{(t,\bsy) \in (t_0, t_1) \times \R^{2s} : \varphi(y_0, \bsy)  = t
\text{ for some } y_0 \in \R \big\} \,\subset\, [t_0, t_1] \times \R^{2s}
\end{equation}
is not empty, then there exists a unique function $\xi \in C^{(\nu_0,
\bsnu)}(\overline{R})$ satisfying
\begin{align} \label{eq:xi-def}
  \varphi(\xi(t,\bsy),\bsy) \,=\, t
  \qquad  \mbox{for all} \quad (t, \bsy) \in \overline{R}.
\end{align}
Thus, given $t$ and $\bsy$, the function $\xi$ determines the unique value of $y_0$ at which the discontinuity in \eqref{eq:cdf-y} occurs. (Note that the set \eqref{eq:V} was denoted by $V$ in \cite{GKS23}.)
Furthermore, for $(t, \bsy) \in R$ the first-order derivatives of $\xi$
are given by
\begin{align*}
  \partial^i \xi(t,\bsy)
  &= - \frac{\partial^{i}\varphi(\xi(t,\bsy),\bsy)}{\partial^{0} \varphi(\xi(t,\bsy),\bsy)}
  \qquad\mbox{for all } i=1,\ldots,2s, \quad \text{and}
  \\
  \frac{\partial}{\partial t} \xi(t,\bsy)
  &= \frac{1}{\partial^{0} \varphi(\xi(t,\bsy),\bsy)}.
\end{align*}

The uniqueness of $\xi$ follows from the monotonicity condition
Assumption~\ref{asm:phi}\ref{itm:mono}. Similarly, because
$\varphi(\cdot,\bsy)$ is increasing, it follows from \eqref{eq:xi-def}
that $\varphi(y_0,\bsy) < t$ if and only if $y_0 < \xi(t,\bsy)$, and so
the preintegrated functions \eqref{eq:g-cdf} and \eqref{eq:g-pdf} can be
written as
\begin{align} \label{eq:g-xi}
  g_{\rm cdf}(\bsy) 
  \,=\,
  \Phi(\xi(t,\bsy))
  \qquad\mbox{and}\qquad
  g_{\rm pdf}(\bsy) \,=\, \frac{\rho(\xi(t,\bsy))}{ \partial^{0} \varphi(\xi(t,\bsy),\bsy)},
\end{align}
for $\bsy\in U_t$, and both functions give $0$ if $\bsy\in
\bbR^{2s}\setminus U_t$.

To establish the smoothness of $g_{\rm cdf}$ and $g_{\rm pdf}$, we need
expressions for the derivatives of $g_{\rm cdf}$ and $g_{\rm pdf}$. 
The proofs of \cite[Theorems~3.2 and~3.3]{GKS23} established that the
mixed derivatives of $g_{\rm cdf}$ and $g_{\rm pdf}$ can be expressed as
finite sums of functions of the form $h_{q, \bseta}(t, \bsy)$ in
\eqref{eq:h-form} below, which arise due to repeated applications of the chain rule. The proofs also obtained bounds on the total number of terms of the form $h_{q, \bseta}(t, \bsy)$ in the sums for each derivative, which together with the constants $B_{q,\bseta}$ in \eqref{eq:h-int} yield bounds on the
$\calH_{2s}^{\bsnu}$-norm of $g_{\rm cdf}$ and $g_{\rm pdf}$.

\begin{assumption} \label{asm:phi-h}
Let $s\ge 1$, $\bsnu\in \bbN_0^{2s}$, $[t_0,t_1]\subset\bbR$, and suppose
that $\varphi$ and $\rho$ in \eqref{eq:g-cdf} and \eqref{eq:g-pdf}
satisfy Assumption~\ref{asm:phi}. Recall the definitions of $U_t$,
$R$ and $\xi$ in \eqref{eq:Ut}, \eqref{eq:V} and \eqref{eq:xi-def},
respectively.
Given $q\in\bbN_0$ 
and $\bseta\le\bsnu$ satisfying $|\bseta|+q \le
|\bsnu|+1$, we consider functions $h_{q,\bseta}: \overline{R} \to \R$
of the form
\begin{align} \label{eq:h-form}
 \hspace{-0.5cm}
 \begin{cases}
 h_{q, \bseta}(t, \bsy) = h_{q,\bseta,(r,\bsalpha,\beta)}(t,\bsy)
 \coloneqq
 \displaystyle\frac{(-1)^r {\rho}^{(\beta)}(\xi(t,\bsy))\,
       \prod_{\ell=1}^{r} \partial^{\bsalpha_\ell}  \varphi(\xi(t,\bsy),\bsy)}
      {[ \partial^{0}\varphi(\xi(t,\bsy),\bsy)]^{r+q}},
\\[4mm]
 \mbox{with $r\in\bbN_0$, $\bsalpha=(\bsalpha_\ell)_{\ell=1}^r$,
 $\bsalpha_\ell\in \N_0^{2s + 1}\!\setminus\!\{\bse_0,\bszero\}$, $\beta \in \N_0$ satisfying}
 \\[1mm]
 r\le 2|\bseta|+q-1,\;
 \alpha_{\ell,0} \le |\bseta|+q, \;
 \beta \leq |\bseta|+q-1,\;
 \beta\bse_0 + \displaystyle\sum_{\ell = 1}^{r} \bsalpha_\ell
 = (r+q-1, \bseta).
 \end{cases}
 \hspace{-2cm}
\end{align}
We assume that all such functions $h_{q,\bseta}$ satisfy
\begin{equation} \label{eq:h-lim}
 \lim_{\bsy \to \partial U_t} h_{q,\bseta}(t, \bsy) = 0
 \quad \text{for all } t \in [t_0, t_1],
\end{equation}
and there is a constant $B_{q,\bseta}$ such that
\begin{equation} \label{eq:h-int}
 \sup_{t \in [t_0, t_1]}
 \int_{U_t} |h_{q,\bseta}(t, \bsy)|^2 \,
 \bspsi_{\bseta}(\bsy_\bseta)\,\bsrho_{-\bseta}(\bsy_{-\bseta})\,\rd \bsy
 \le B_{q,\bseta} \,<\, \infty.
\end{equation}
\end{assumption}

The parameter $\bsnu$ in Assumptions~\ref{asm:phi} and \ref{asm:phi-h} determines the resulting smoothness of the preintegrated functions, while
the parameter $q$ in Assumption~\ref{asm:phi-h} relates to whether the cdf or pdf is being 
considered.
More precisely, under Assumption~\ref{asm:phi} and
Assumption~\ref{asm:phi-h} for $q=0$ and all $\bszero \ne \bseta\le\bsnu$,
the main result in \cite[Theorem~3.2]{GKS23} states that, for $t \in
[t_0, t_1]$, the preintegrated function for the cdf satisfies
\[
  g_{\rm cdf} \in\calH^{\bsnu}_{2s} \cap C^{\bsnu}(\R^{2s}),
\]
with its $\calH^\bsnu_{2s}$-norm bounded uniformly in $t$,
\begin{align} \label{eq:norm-cdf}
 \|g_{\rm cdf}\|_{\calH^\bsnu_{2s}}
 \le \Bigg(1 +
 \sum_{\bszero\ne\bseta \leq \bsnu}
 \frac{\big(8^{|\bseta|-1}(|\bseta|-1)!\big)^2 B_{0, \bseta}}{\gamma_\bseta}\Bigg)^{1/2}
 \,<\, \infty\,.
\end{align}

Similarly, under Assumption~\ref{asm:phi} and Assumption~\ref{asm:phi-h}
for $q=1$ and all $\bseta\le\bsnu$, the result in
\cite[Theorem~3.3]{GKS23} states that, for $t \in [t_0, t_1]$, the
preintegrated function for the pdf satisfies
\[
  g_{\rm pdf} \in \calH^{\bsnu}_{2s} \cap C^{\bsnu}(\R^{2s}),
\]
with its $\calH^\bsnu_{2s}$-norm bounded uniformly in $t$,
\begin{align} \label{eq:norm-pdf}
 \|g_{\rm pdf}\|_{\calH^\bsnu_{2s}}
 \le \Bigg(\sum_{\bseta \leq \bsnu}
 \frac{\big(8^{|\bseta|}|\bseta|!\big)^2 B_{1, \bseta}}{\gamma_\bseta}\Bigg)^{1/2}
 \,<\, \infty\,.
\end{align}

Later in Section~\ref{sec:cond_preint}, we will obtain explicit constants $B_{q,\bseta}$ for
the special case of the PDE quantity of interest described in the Introduction.

\subsection{Quasi-Monte Carlo methods}
\label{sec:qmc} Quasi-Monte Carlo (QMC) methods are a powerful class of
equal-weight quadrature rules that have proven to be effective at
approximating high-dimensional integrals. Originally designed for
integration on the unit cube, they can also be used for integration on
unbounded domains with respect to a product density $\bsrho$, by mapping
back to the unit cube using the inverse cdf transform. An $N$-point
transformed QMC approximation to a $2s$-dimensional integral, as in
\eqref{eq:preint-cdfpdf}, is given by
\begin{equation}
\label{eq:qmc}
I_{2s}(g)\,\coloneqq\,
\int_{\R^{2s}} g(\bsy) \bsrho(\bsy) \, \rd \bsy
\,\approx\, \frac{1}{N} \sum_{n = 0}^{N - 1} g(\bsPhi^{-1}(\bsq_n))
\,\eqqcolon\, Q_{2s, N} (g)\,,
\end{equation}
where the inverse cdf $\bsPhi^{-1}$ is applied componentwise to map the
QMC points, $\{\bsq_n\}_{n = 0}^{N - 1}$,
from the unit cube to $\R^{2s}$. For further details on QMC see, e.g., \cite{DKS13}.

In this paper, we use a simple class of randomised QMC methods called
\emph{randomly shifted lattice rules}.
For a generating vector $\genvec \in \N^{2s}$ and a uniformly
distributed random shift $\bsDelta \in [0, 1)^{2s}$,
the randomly shifted lattice points are given by
\begin{equation*}
\bsq_n \,\coloneqq\,
\bigg\{ \frac{n \genvec}{N} + \bsDelta\bigg\}
\quad \text{for } n = 0, 1, \ldots, N - 1,
\end{equation*}
where $\{\cdot\}$ denotes taking the fractional part componentwise.

Good generating vectors can be constructed using the
\emph{component-by-component} (CBC) construction. For the unbounded
setting as in \eqref{eq:qmc}, the CBC construction was developed in
\cite{NK14}. Theorem~8 in \cite{NK14} shows that under certain conditions
on the weight function $\psi$, the root-mean-square error (RMSE)  of a
CBC-generated randomly shifted lattice rule achieves close to $\bigO(1/N)$
convergence. Below we present their main result adapted to 
$\calH_{2s}^\bsone$ for the choice of exponential and Gaussian weight
functions (see also \cite{GKS22a}).

\begin{theorem} \label{thm:RMSE-bound-g}
For $s, N \in \N$, let $\psi$ be either an exponential or Gaussian weight function
and let $g\in\calH^{\bsone}_{2s}$ with weight parameters  $\bsgamma = (\gamma_{\bseta})_{\bseta\leq\bsone}$.
Then a randomly shifted lattice rule with $N$ points in $2s$ dimensions can be constructed using a CBC algorithm
such that, for $\eps\in (0, 1/2]$  depending on $\psi$, the RMSE satisfies	
\begin{align} \label{eq:qmc-rmse}
			\sqrt{\bbE_\bsDelta \big[| I_{2s}(g)- Q_{2s, N} (g)|^2\big]}
			 \leq
			\bigg(\frac{1}{\phi_{\rm{tot}}(N)}\sum_{\bszero\neq\bseta\leq\bsone}\gamma_\bseta^{1/(2(1-\eps))}\,\varrho(\eps)^{|\bseta|} \bigg)^{1 - \eps}
			\|g\|_{\calH^\bsone_{2s}}\,,
\end{align}
where $\phi_\mathrm{tot}$ is the Euler totient function and $\varrho$ also
depends on $\psi$.

For an exponential weight function, $\psi(y) = e^{-2 \mu |y|}$ with $ \mu > 0$, the bound \eqref{eq:qmc-rmse}
holds for all $\epsilon \in (0, 1/2]$ with
\begin{align}\label{eq:qmc-err-const-exp}
			\varrho(\eps) \,=\, 2\bigg(\frac{4\sqrt{2\pi}\exp(2\mu^2/\epsilon)}{\pi^{2-\epsilon}(2-\epsilon)\epsilon}\bigg)^{1/(2(1-\eps))}
			\,\zeta\bigg(\frac{1 - \eps/2}{1 - \eps}\bigg) \,,
\end{align}
whereas for a Gaussian weight function, $\psi(y) = e^{- \mu y^2}$ with $ \mu \in (0, 1/2)$,
the bound \eqref{eq:qmc-rmse} holds for all $\eps \in (\mu, 1/2]$ with
\begin{align}\label{eq:qmc-err-const-Gauss}
		\varrho(\eps) 
		\,=\, 2\bigg(\dfrac{\sqrt{2\pi}}{\pi^{2-2 \mu}(1- \mu) \mu}\bigg)^{1/(2(1-\eps))}
		\,\zeta\bigg(\frac{1 - \mu}{1 - \eps}\bigg)\,.
\end{align}
\end{theorem}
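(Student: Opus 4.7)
The plan is to reduce the statement to an existing result by using the norm-equivalence bridge established in \cite{GKS22a}. Under condition \eqref{eq:psi}, which both the exponential weight \eqref{eq:psi-exp} and the Gaussian weight \eqref{eq:psi-Gauss} satisfy, the first-order space $\calH^{\bsone}_{2s}$ defined here coincides (with equivalent norms) with the unanchored weighted Sobolev space over $\bbR^{2s}$ introduced in \cite{NK14}. Thus the theorem should follow essentially verbatim from Theorem~8 of \cite{NK14}, with only the constant $\varrho(\eps)$ requiring case-by-case computation for the two families of weights.

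Concretely, I would proceed in three steps. First, invoke the equivalence of $\calH^{\bsone}_{2s}$ with the Nichols--Kuo space, so that the RMSE of the randomly shifted lattice rule can be bounded by the shift-averaged worst-case error in that space multiplied by $\|g\|_{\calH^{\bsone}_{2s}}$. Second, apply the CBC existence result of \cite{NK14}, which produces a generating vector $\genvec\in\bbN^{2s}$ for which the shift-averaged worst-case error, raised to the power $1/(1-\eps)$, is bounded by a sum of the form
\[
 \frac{1}{\phi_{\rm tot}(N)}\sum_{\bszero\neq\bseta\le\bsone}
   \gamma_\bseta^{1/(2(1-\eps))}\,\varrho(\eps)^{|\bseta|},
\]
where $\varrho(\eps)$ depends only on the one-dimensional weight function $\psi$ through an integral of the type
\[
 \int_{-\infty}^{\infty}\bigl[\Phi(y)(1-\Phi(y))\bigr]^{1-\eps}\psi(y)^{-(1-\eps)}\,\rd y,
\]
together with a Riemann zeta factor coming from the summability of the lattice-rule coefficients. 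Raising to the $(1-\eps)$ power gives the stated bound \eqref{eq:qmc-rmse}.

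The substantive step, and the only one that is not a direct citation, is the explicit evaluation of $\varrho(\eps)$ for the two specific families of weights. For the exponential weight $\psi(y)=e^{-2\mu|y|}$, I would combine the Mill's-ratio tail bound $\Phi(y)(1-\Phi(y))\lesssim e^{-y^2/2}$ with the factor $e^{2\mu|y|(1-\eps)}$ coming from $\psi^{-(1-\eps)}$ and complete the square in the exponent; this yields a Gaussian integral producing the characteristic factor $\exp(2\mu^2/\eps)$ in \eqref{eq:qmc-err-const-exp} and imposes no restriction beyond $\eps\in(0,1/2]$. For the Gaussian weight $\psi(y)=e^{-\mu y^2}$, the same integral is purely Gaussian and converges precisely when $(1-\eps)(1-2\mu)>0$ combined with $1-\eps>0$; writing the condition for a finite value forces $\eps\in(\mu,1/2]$, and evaluating the resulting Gaussian moment yields \eqref{eq:qmc-err-const-Gauss}. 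The zeta factors $\zeta\bigl(\tfrac{1-\eps/2}{1-\eps}\bigr)$ and $\zeta\bigl(\tfrac{1-\mu}{1-\eps}\bigr)$ come from summing the reciprocals of integer powers that appear in the Fourier expansion of the reproducing kernel.

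The main obstacle is bookkeeping: tracking the exponents and constants through the norm equivalence, the worst-case error identity from \cite{NK14}, and the completion of squares in the one-dimensional integral, so that the final constants line up exactly with \eqref{eq:qmc-err-const-exp} and \eqref{eq:qmc-err-const-Gauss}. Conceptually no new difficulty arises beyond what is already in \cite{NK14,GKS22a}; the work is purely calculational.
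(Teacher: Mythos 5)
Your high-level plan mirrors the paper's proof exactly: (i) invoke the equivalence of $\calH^{\bsone}_{2s}$ with the unanchored space of \cite{NK14} established in \cite{GKS22a}; (ii) apply Theorem~8 of \cite{NK14} to obtain the CBC existence bound in terms of weight-dependent constants $C_2$, $r_2$; (iii) specialize these constants to the exponential and Gaussian weight functions. You also correctly locate the source of the restriction $\eps\in(\mu,1/2]$ in the Gaussian case: it comes from the condition $\lambda > 1/(2r_2)$ with $r_2 = 1-\mu$ once one sets $\lambda = 1/(2(1-\eps))$, and the exponential case carries no corresponding extra constraint.

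Where your sketch has a genuine gap is step (iii). The paper does not derive $\varrho(\eps)$ from a single fixed one-dimensional integral; it cites \cite{KSWW10}, Examples 4 and 5, which furnish a \emph{one-parameter family} of admissible pairs $(C_2, r_2)$ indexed by a free parameter $\delta \in (0,1/2)$ — namely $C_2 = \sqrt{2\pi}\exp(\mu^2/\delta)/[\pi^{2-2\delta}(1-\delta)\delta]$ and $r_2 = 1-\delta$ in the exponential case — and then makes the specific choice $\delta = \eps/2$. That tuning is what produces the factor $\exp(2\mu^2/\eps)$ and the zeta argument $(1-\eps/2)/(1-\eps)$ in \eqref{eq:qmc-err-const-exp}. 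Your proposed integral $\int_{-\infty}^{\infty}[\Phi(y)(1-\Phi(y))]^{1-\eps}\psi(y)^{-(1-\eps)}\,\rd y$ has the exponent hard-wired to $1-\eps$ with no free parameter; completing the square on $-(1-\eps)y^2/2 + 2\mu(1-\eps)|y|$ yields $\exp(2\mu^2(1-\eps))$, not $\exp(2\mu^2/\eps)$, so the sketch as written does not reproduce the stated constant. Similarly, the Gaussian-case constraint $\eps > \mu$ does not come from convergence of that integral (which only requires $\mu < 1/2$), but from the admissibility condition $\lambda > 1/(2r_2)$ in \cite{NK14}. The missing idea is the existence of the free parameter in the \cite{KSWW10} constants and the optimal choice $\delta = \eps/2$; once that is supplied, the rest of your argument goes through as in the paper.
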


\begin{proof}
Theorem 8 in \cite{NK14} gives for all $\lambda\in(1/(2r_2),1]$,
\begin{align} \label{eq:qmc-err}
			\sqrt{\bbE_\bsDelta \big[| I_{2s}(g)- Q_{2s, N} (g)|^2\big]}
			 \leq
			\bigg(\frac{1}{\phi_{\rm{tot}}(N)}\sum_{\bszero\neq\bseta\leq\bsone}\gamma_\bseta^{\lambda}
			\,[2C_2^\lambda \zeta(2r_2 \lambda)]^{|\bseta|} \bigg)^{1/(2\lambda)}
			\|g\|_{\calH^\bsone_{2s}}\,,
		\end{align} where $r_2 \in (0, 1)$ and $C_2$ depend on $\psi$.
Note that we have also used the equivalence of the unanchored space
from \cite{NK14} and $\calH^\bsone_{2s}$ from \cite{GKS22a} to write this
in terms of $\calH^\bsone_{2s}$.

Following Example 5 in \cite{KSWW10} (with parameters $\nu = 1$ and $\alpha = 2/\mu$),
for an exponential weight
function we have
for any $\delta \in (0, 1/2)$,
\[
C_2
  \,=\, \dfrac{\sqrt{2\pi}\exp( \mu^2/\delta)}{\pi^{2-2\delta}(1-\delta)\delta}
			\quad \text{and} \quad
			 r_2 = 1-\delta.
\]
The result \eqref{eq:qmc-rmse} with  \eqref{eq:qmc-err-const-exp} follows by letting  $\varrho(\eps) = 2C_2^\lambda \zeta(2r_2 \lambda)$
with $\delta = \eps/2$ and $\lambda = 1/(2(1 - \eps))$,
noting that $\lambda = 1/(2(1 - \eps)) > 1/(2r_2) = 1/(2(1 - \eps/2))$.

For a Gaussian weight function, Example 4 in \cite{KSWW10}
(with $\nu = 1$ and $\alpha = 1/\mu$) gives
\[
C_2
 = \dfrac{\sqrt{2\pi}}{\pi^{2-2 \mu}(1- \mu) \mu}
 \quad \text{and} \quad r_2=1- \mu,
\]
In this case, the result \eqref{eq:qmc-rmse} with \eqref{eq:qmc-err-const-Gauss}
follows by letting
$\varrho(\eps) = 2C_2^\lambda \zeta(2r_2 \lambda)$ and
$\lambda = 1/(2(1 - \eps))$ for $\eps \in (\mu, 1/2]$,
noting that $\lambda = 1/(2(1 - \eps)) > 1/(2r_2) = 1/(2(1 - \mu))$
since $\eps > \mu$.
\end{proof}

To ensure that the preintegrated functions $g_\mathrm{cdf}$ and $g_\mathrm{pdf}$
as in \eqref{eq:g-xi} belong to $\calH^{\bsone}_{2s}$, and hence achieve the QMC error bound \eqref{eq:qmc-rmse}, we require that Assumptions~\ref{asm:phi} and \ref{asm:phi-h}
hold for $\bsnu = \bsone$, which will be verified in Section~\ref{sec:cond_preint}.

\section{PDE with random input}
\label{sec:pde}

In this section, we introduce the necessary background material on PDE
with random (specifically, i.i.d.\ normally distributed) inputs. First, we
start with the following notation. Let $V \coloneqq H_0^1(D)$ denote the
\emph{first-order Sobolev space} with vanishing boundary trace,  equipped
with the norm
$\|v\|_V \coloneqq \|\nabla v\|_{L^2(D)}$,
and denote the dual space by $V^* \coloneqq H^{-1}(D)$. We will also use
the notation $\langle \cdot, \cdot \rangle$ to denote the $L^2(D)$ inner
product, which can be extended continuously to a duality paring on $V^*
\times V$. Similarly, $W^{k, p}(D)$ denotes the usual Sobolev space of
smoothness $k \in \N$ and integrability $1 \leq p \leq \infty$, with the
Hilbert spaces ($p = 2$) denoted by $H^k(D) \coloneqq W^{k, 2}(D)$.

Next, we make the following assumptions on the physical domain $D$, the
coefficient $a$ and the source term $\ell$, which will ensure that the
variational form of PDE \eqref{eq:pde_0} is well posed.

\begin{assumption} \label{asm:pde}
Let $d\in \{1,2,3\}$. For the PDE problem \eqref{eq:pde_0} we assume
that
\begin{enumerate}[label=\textnormal{(\alph*)}] 
\item \label{asm:D} $D \subset \R^d$ is a bounded convex domain
with a $C^2$ boundary for $d \geq 2$, or, for $d = 2$,
$D \subset \R^2$ is a strictly convex polygon;

\item \label{asm:a} for $\bsz\in \bbR^s$, $a(\cdot, \bsz) \in W^{1,
    \infty}(D)$ is of the form \eqref{eq:axz} with 
    $a_j \in W^{1, \infty}(D)$ and $z_j \sim \rmN(0, 1)$ for $j = 1,
    2, \ldots, s$;
\item \label{asm:ell}
    for $\bsw\in \bbR^{s+1}$, $\ell(\cdot, \bsw)
    \in V^*$ is of the form \eqref{eq:lxw} with $\ellbar$, $\ell_i \in
    V^*$ and $w_i \sim \rmN(0, 1)$ for $i = 0, 1, 2, \ldots, s$; and
\item \label{asm:pos} $\ell_0 \in L^\infty(D)$ with $\ell_{0,\inf}
    \coloneqq \inf_{\bsx \in D} \ell_0(\bsx) > 0$.
\end{enumerate}
\end{assumption}
Here $W^{1, \infty}(D)$, the Sobolev space of functions with essentially bounded weak derivatives
on $D$, is equipped with the norm
$\|v\|_{W^{1,\infty}(D)} \coloneqq \max\{\|v\|_{L^\infty(D)},\|\nabla v\|_{L^\infty(D)}\}$.

Assumption~\ref{asm:pde}\ref{asm:a} implies that for $\bsz \in
\R^s$ there exists $\amin(\bsz) < \amax(\bsz)$ such that
\begin{align}\label{eq:a_bound}
    \amin(\bsz) \,\leq\, a(\bsx,\bsz) \,\leq\, \amax(\bsz)\quad
    \text{for all } \bsx \in D.
\end{align}
Furthermore, $\amax, 1/\amin \in L_{\bsrho}^p(\R^s)$ for all $1 \leq
p < \infty$, where $L_{\bsrho}^p(\R^s)$ denotes the space of
$p$-integrable functions on $\R^s$ with respect to the product normal
density $\bsrho$.

The smoothness of $u(\cdot, \bsy)$
will be governed by further regularity of $\ellbar, \ \ell_i$
beyond $V^*$. The extra condition on $\ell_0$ in
Assumption~\ref{asm:pde}\ref{asm:pos} is required to ensure that the
monotonicity condition for preintegration
(cf.~Assumption~\ref{asm:phi}\ref{itm:mono}) is satisfied, see
Section~\ref{sec:mono}.

\subsection{Parametric variational form}

In the usual way, the variational formulation is obtained by multiplying the PDE
\eqref{eq:pde_0} by a test function $v \in V$ then using Green's formula to integrate by parts with respect to $\bsx$, giving
\begin{align}\label{eq:weak_form}
		\int_D a(\bsx,\bsz)\,\nabla u(\bsx,\bsy)\cdot\nabla v(\bsx)\,\rd\bsx = \int_D \ell(\bsx,\bsw)\,v(\bsx)\,\rd\bsx,
	\end{align}
for some parameter
\[
  \bsy = (y_0,y_1,\ldots,y_s,y_{s+1},\ldots,y_{2s})
  = (w_0,w_1,\ldots,w_s,z_1,\ldots,z_s) = (\bsw,\bsz) \in \bbR^{2s+1}.
\]%
For $\bsz \in \bbR^s$,
define also the parametric bilinear form $\calA(\bsz; \cdot, \cdot) :V\times V\to
\R$ by
\begin{align}\label{eq:bilin}
\calA(\bsz; v,v')
\,\coloneqq\,
\int_D a(\bsx,\bsz)\,\nabla v(\bsx)\cdot\nabla v'(\bsx)\,\rd\bsx \quad \text{ for } v, v'\in V.
\end{align}
For any particular $\bsy\in\R^{2s+1}$,
the variational formulation of \eqref{eq:pde_0} is:
find $u(\cdot, \bsy) \in V$ such that
\begin{align}\label{eq:varpde}
\calA(\bsz; u(\cdot,\bsy),v) = \langle \ell(\cdot,\bsw),v \rangle \quad \text{for all } v\in V.
\end{align}

The condition \eqref{eq:a_bound} on the coefficient $a(\cdot, \bsz)$
implies that the bilinear form $\calA({\bsz}; \cdot, \cdot)$ is coercive and continuous, i.e., for any given
$\bsz \in \R^s$ and all $v,v'\in V$
\begin{align*}
	\calA(\bsz;v,v) \,\geq\, \amin(\bsz)\|v\|^2_V
	\qquad \text{and} \qquad
	|\calA(\bsz;v,v')| \,\leq\, \amax(\bsz)\|v\|_V \|v'\|_V.
\end{align*}
The Lax--Milgram Lemma then implies that
the variational problem \eqref{eq:varpde} admits a unique solution $u(\cdot, \bsy) \in V$,
with the following upper  bound
\begin{equation}
\label{eq:LaxMil}
\|u(\cdot, \bsy)\|_V \,\leq\,
\frac{\|\ell(\cdot, \bsw)\|_{V^*}}{\amin(\bsz)}
\,\leq\, \frac{1}{\amin(\bsz)}\bigg( \|\ellbar\|_{V^*} + \sum_{i = 0}^{s} |w_i| \, \|\ell_i\|_{V^*}\bigg),
\end{equation}
where in the second inequality we have used the specific form
\eqref{eq:lxw} of $\ell(\cdot, \bsw)$ and the triangle inequality.
Similarly, for $\calG \in V^*$, the quantity of interest
\eqref{eq:qoi} is bounded by
\begin{equation}
\label{eq:LaxMil-G}
|\calG(u(\cdot, \bsy))|
\,\leq\,
\|\calG\|_{V^*} \|u(\cdot, \bsy)\|_V
\,\leq\,
\frac{\|\calG\|_{V^*}\|\ell(\cdot, \bsw)\|_{V^*}}{\amin(\bsz)}\,.
\end{equation}

Due to linear structure of the source term \eqref{eq:lxw}, the solution
$u(\cdot, \bsy) \in V$ of \eqref{eq:varpde} can be written as a sum of $s + 1$ solutions,
\begin{equation}
\label{eq:u-sum}
  u(\bsx,\bsy) \,=\, \ubar(\bsx,\bsz) + \sum_{i=0}^s w_i\, u_i(\bsx,\bsz),
\end{equation}
where $\ubar(\cdot,\bsz) \in V$ and $u_i(\cdot,\bsz) \in V$ denote the
solutions to \eqref{eq:varpde} with source terms $\ellbar \in V^*$ and
$\ell_i \in V^*$, respectively, and they are independent of $\bsw$. Again the Lax--Milgram Lemma ensures that the solutions $\ubar(\cdot,
\bsz) \in V$ and $u_i(\cdot, \bsz) \in V$ are unique. In addition, both of
the a priori bounds \eqref{eq:LaxMil} and \eqref{eq:LaxMil-G} also hold
for $\ubar(\cdot, \bsz), \ u_i(\cdot, \bsz)$ but with $\ellbar,\ \ell_i$
on the right, respectively. Similarly, the quantity of interest
\eqref{eq:qoi} can be expressed as
\begin{equation} \label{eq:QoI2}
\calG(u(\cdot, \bsy)) \,=\, \calG(\ubar(\cdot, \bsz)) + \sum_{i = 0}^s w_i\ \calG(u_i(\cdot, \bsz)).
\end{equation}

\subsection{Parametric regularity}

To apply the preintegration theory from Section~\ref{sec:preint-theory},
as well as the QMC theory from Section~\ref{sec:qmc}, to this PDE problem,
we require that the solution $u$ is sufficiently regular with respect to the
 parameter $\bsy$.
Parametric regularity results for an arbitrary right-hand side are given in, e.g., \cite{GrKNSSS15},
which can easily be adapted to our setting as in the following theorem.

For readability, we adopt the notation
\begin{equation}
\label{eq:bj}
    b_j \,\coloneqq\, \|a_j\|_{L^{\infty}(D)}, 
    \quad     \cbar \,\coloneqq\, \|\ellbar\|_{V^*}
        \quad \text{and}
    \quad c_i\,\coloneqq\, \|\ell_i\|_{V^*}\,.
\end{equation}

We also split the order $\bsnu \in  \N_0^{2s + 1}$ for the derivative with
respect to $\bsy = (\bsw, \bsz)$ by writing $\bsnu = (\bsnu_\bsw,
\bsnu_\bsz)$, where $\bsnu_\bsw = (\nu_0, \nu_1, \ldots, \nu_s)$
corresponds to the derivatives with respect to $\bsw = (y_0, y_1,
\ldots, y_s)$ and $\bsnu_\bsz = (\nu_{s + 1}, \nu_{s + 2}, \ldots,
\nu_{2s})$ corresponds to the derivatives with respect to $\bsz=(y_{s
+ 1}, y_{s + 2}, \ldots, y_{2s})$.

\begin{theorem}\label{thm:deriv_bnd}
Suppose that Assumption~\ref{asm:pde} holds. Then, for $\bsy = (\bsw,
\bsz) \in \R^{2s + 1}$, the order $\bsnu = (\bsnu_\bsw, \bsnu_\bsz) \in
\N_0^{2s + 1}$ derivative of the solution $u(\cdot, \bsy) \in V$ of
\eqref{eq:varpde} with respect to $\bsy$ satisfies
\begin{align} \label{eq:deriv_bnd}
  \|\partial^\bsnu  u(\cdot,\bsy)\|_{V}  
  \le
  \begin{cases}
  \displaystyle
  \frac{|\bsnu_\bsz|!}{(\ln 2)^{|\bsnu_\bsz|}}
  \bigg(\prod_{j=1}^s b_j^{\nu_{s+j}}\bigg)
  \frac{\overline{c} + \sum_{i=0}^s c_i\,|w_i|}{\amin(\bsz)}
  & \mbox{if } \bsnu_\bsw = \bszero,
  \\
  \displaystyle
  \frac{|\bsnu_\bsz|!}{(\ln 2)^{|\bsnu_\bsz|}}
  \bigg(\prod_{j = 1}^s b_j^{\nu_{s+j}}\bigg)
  \frac{c_k}{\amin(\bsz)}
  &
  \text{if } \bsnu_{\bsw} = \bse_k \text{ for } k \in \{0:s\},
\\
  0 & \mbox{otherwise}.
  \end{cases}
\end{align}
\end{theorem}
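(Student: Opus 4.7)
The strategy splits into two pieces: a trivial reduction using the affine dependence of the source term in $\bsw$, followed by a standard inductive estimate for the pure $\bsz$-derivatives that exploits the fact that $a(\bsx,\bsz)=\exp(\sum_j z_j a_j(\bsx))$, whose mixed $\bsz$-derivatives factor cleanly as $\partial^{\bsm_\bsz} a = a\prod_{j=1}^s a_j^{m_{s+j}}$. First, I would use the decomposition \eqref{eq:u-sum}, $u(\cdot,\bsy) = \ubar(\cdot,\bsz)+\sum_{i=0}^s w_i u_i(\cdot,\bsz)$, whose summands are independent of $\bsw$. Since $u$ is affine in $\bsw$, any derivative with $|\bsnu_\bsw|\ge 2$ vanishes identically, giving the third case of \eqref{eq:deriv_bnd}. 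For $\bsnu_\bsw=\bszero$, differentiation in $\bsy$ reduces to $\partial^{\bsnu}u(\cdot,\bsy) = \partial^{\bsnu_\bsz}\ubar(\cdot,\bsz) + \sum_{i=0}^s w_i\,\partial^{\bsnu_\bsz} u_i(\cdot,\bsz)$, while for $\bsnu_\bsw=\bse_k$ it reduces to $\partial^{\bsnu}u(\cdot,\bsy) = \partial^{\bsnu_\bsz} u_k(\cdot,\bsz)$. By the triangle inequality, both remaining cases follow from the single estimate
\begin{equation*}
\|\partial^{\bsnu_\bsz}v(\cdot,\bsz)\|_V
\,\le\,
\frac{|\bsnu_\bsz|!}{(\ln 2)^{|\bsnu_\bsz|}}\bigg(\prod_{j=1}^s b_j^{\nu_{s+j}}\bigg)\frac{\|g\|_{V^*}}{\amin(\bsz)},
\end{equation*}
applied to each pair $(v,g)\in\{(\ubar,\ellbar),(u_0,\ell_0),\ldots,(u_s,\ell_s)\}$, using $\|\ellbar\|_{V^*}=\cbar$ and $\|\ell_i\|_{V^*}=c_i$.

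Second, I would prove this auxiliary estimate by induction on $|\bsnu_\bsz|$, working in the energy norm $\|w\|_\calA\coloneqq\sqrt{\calA(\bsz;w,w)}$. The base case $\bsnu_\bsz=\bszero$ is obtained by testing the weak form with $\phi=v$, giving $\|v\|_\calA^2\le\|g\|_{V^*}\|v\|_V\le\|g\|_{V^*}\|v\|_\calA/\sqrt{\amin(\bsz)}$, hence $\|v\|_\calA\le\|g\|_{V^*}/\sqrt{\amin(\bsz)}$. For the inductive step, differentiating $\calA(\bsz;v,\phi)=\langle g,\phi\rangle$ in $\bsz$ via the Leibniz rule (the right-hand side is $\bsz$-independent) and isolating the top-order term yields, for $\bsnu_\bsz\neq\bszero$,
\begin{equation*}
\calA(\bsz;\partial^{\bsnu_\bsz}v,\phi)
\,=\,
-\sum_{\bszero\ne\bsm_\bsz\le\bsnu_\bsz}\binom{\bsnu_\bsz}{\bsm_\bsz}\int_D a(\bsx,\bsz)\prod_{j=1}^s a_j(\bsx)^{m_{s+j}}\,\nabla\partial^{\bsnu_\bsz-\bsm_\bsz}v\cdot\nabla\phi\,\rd\bsx.
\end{equation*}
Testing with $\phi=\partial^{\bsnu_\bsz}v$, applying Cauchy--Schwarz in the $\calA$-weighted inner product, and using $|a_j|\le b_j$ then gives
\begin{equation*}
\|\partial^{\bsnu_\bsz}v\|_\calA
\,\le\,
\sum_{\bszero\ne\bsm_\bsz\le\bsnu_\bsz}\binom{\bsnu_\bsz}{\bsm_\bsz}\bigg(\prod_{j=1}^s b_j^{m_{s+j}}\bigg)\|\partial^{\bsnu_\bsz-\bsm_\bsz}v\|_\calA.
\end{equation*}

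Substituting the inductive hypothesis, factoring out $\prod_j b_j^{\nu_{s+j}}\|v\|_\calA$, and grouping terms by $k=|\bsm_\bsz|$ via the Vandermonde identity $\sum_{|\bsm_\bsz|=k,\,\bsm_\bsz\le\bsnu_\bsz}\binom{\bsnu_\bsz}{\bsm_\bsz}=\binom{|\bsnu_\bsz|}{k}$ reduces the combinatorics to the scalar sum $\sum_{k=1}^{|\bsnu_\bsz|}(\ln 2)^k/k!\le e^{\ln 2}-1 = 1$, which closes the induction in the $\calA$-norm. The conversion $\|\cdot\|_V\le\|\cdot\|_\calA/\sqrt{\amin(\bsz)}$ combined with the base case then yields the stated $V$-norm bound, and substituting back into the case distinctions from the first paragraph delivers \eqref{eq:deriv_bnd}. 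The main obstacle is the combinatorial identity linking the $\ln 2$ factor to factorial growth; the remaining ingredients (Leibniz, Cauchy--Schwarz in $\calA$, coercivity) are routine and rely essentially on $a$ being the exponential of a function linear in $\bsz$.
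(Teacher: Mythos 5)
Your proof is correct and follows the same outer structure as the paper's: it uses the affine decomposition \eqref{eq:u-sum} in $\bsw$ to reduce to the three cases, and the heart of the matter is a $\bsz$-derivative estimate applied to each $u_i$ and $\ubar$. Where you genuinely diverge is that the paper simply cites Theorem~14 of \cite{GrKNSSS15} for the bound
\[
\|\partial^{\bsnu_\bsz} u_i(\cdot, \bsz)\|_V
\le
\frac{|\bsnu_\bsz|!}{(\ln 2)^{|\bsnu_\bsz|}} \bigg(\prod_{j=1}^s b_j^{\nu_{s+j}}\bigg)
\frac{c_i}{\amin(\bsz)},
\]
whereas you reprove it from scratch via an induction in the energy norm $\|\cdot\|_\calA$. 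Your argument is sound: the Leibniz expansion of $\partial^{\bsnu_\bsz}\calA(\bsz;v,\phi)=0$ (for $\bsnu_\bsz\ne\bszero$) together with $\partial^{\bsm_\bsz}a = a\prod_j a_j^{m_{s+j}}$ gives the recursion, testing with $\phi=\partial^{\bsnu_\bsz}v$ and Cauchy--Schwarz in the $a$-weighted inner product gives the $\calA$-norm inequality, the Vandermonde identity collapses the multi-index sum to a one-dimensional sum, and $\sum_{k\ge1}(\ln 2)^k/k! = e^{\ln 2}-1=1$ closes the induction exactly. The conversion $\|\cdot\|_V\le\|\cdot\|_\calA/\sqrt{\amin(\bsz)}$ combined with the base case $\|v\|_\calA\le\|g\|_{V^*}/\sqrt{\amin(\bsz)}$ recovers the two factors of $\sqrt{\amin(\bsz)}$ in the denominator. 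What your version buys is self-containedness: the reader sees exactly why the constant has the $|\bsnu_\bsz|!/(\ln 2)^{|\bsnu_\bsz|}$ form and how it relies on the exponential structure of $a$, rather than having to chase the citation. The trade-off is length; the paper's proof is a one-line invocation of a known result. Note also that your inductive argument is essentially the one given in \cite{GrKNSSS15} itself, so this is not a genuinely new route so much as an inlined one.
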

\begin{proof}

Differentiating the linear expansion of $u(\cdot, \bsy)$ in
\eqref{eq:u-sum} with respect to $\bsy$ gives
\begin{equation}
\label{eq:du-sum}
\partial^\bsnu  u(\bsx, \bsy)
\,=\, \partial^{\bsnu_\bsw}
\partial^{\bsnu_\bsz} \ubar(\bsx, \bsz)
+ \sum_{i = 0}^s (\partial^{\bsnu_\bsw} w_i)\,
  (\partial^{\bsnu_\bsz}  u_i(\bsx, \bsz)) \,
,
\end{equation}
where we used the fact that $u_i(\cdot, \bsz)$ is the solution to
\eqref{eq:varpde} with the (deterministic) source term $\ell_i$ and is
independent of $\bsw$. Hence, by \cite[Theorem~14]{GrKNSSS15}
\begin{align}
\label{eq:der-GrKNSSS}
\|\partial^{\bsnu_\bsz} u_i(\cdot, \bsz)\|_V
\,&\leq\,
\frac{|\bsnu_\bsz|!}{(\ln 2)^{|\bsnu_\bsz|}} \bigg(\prod_{j=1}^s \|a_j\|_{L^\infty(D)}^{\nu_{s+j}}\bigg)
\frac{\|\ell_i\|_{V^*}}{\amin(\bsz)}
 \,= \, \frac{|\bsnu_\bsz|!}{(\ln 2)^{|\bsnu_\bsz|}} \bigg(\prod_{j=1}^s b_j^{\nu_{s+j}}\bigg)
\frac{c_i}{\amin(\bsz)},
\end{align}
where we have used the definitions in \eqref{eq:bj}.
The same bounds hold for $\|\partial^{\bsnu_\bsz} \ubar(\cdot, \bsz)\|_V$
with $\|\ell_i\|_{V^*}$, $c_i$ replaced by $\|\ellbar\|_{V^*}$, $\cbar$.

If $\bsnu_\bsw = \bszero$ then in \eqref{eq:du-sum} each derivative
operator $\partial^{\bsnu_\bsw}$ disappears and by the triangle
inequality
\[
\|\partial^\bsnu  u(\cdot, \bsy)\|_V
\,\leq\, \|\partial^{\bsnu_\bsz}  \ubar(\cdot, \bsz)\|_V
+ \sum_{i = 0}^s |w_i|\,\|\partial^{\bsnu_\bsz} u_i(\cdot, \bsz)\|_V \,
.
\]
Substituting in the bounds \eqref{eq:der-GrKNSSS}
yields case 1 of \eqref{eq:deriv_bnd}.

If $\bsnu_\bsw = \bse_k$ for some $k\in \{0:s\}$, then
\eqref{eq:du-sum} becomes $\partial^\bsnu  u(\bsx, \bsy) =
\partial^{\bsnu_\bsz} u_k(\bsx, \bsz)$. Taking the $V$-norm and
substituting in \eqref{eq:der-GrKNSSS} for $i=k$ yields
case 2 of \eqref{eq:deriv_bnd}.

Finally, if $|\bsnu_\bsw| > 1$ then clearly
$\partial^{\bsnu_\bsw}\ubar(\cdot, \bsz) = 0$ and
$\partial^{\bsnu_\bsw} w_i = 0$ for all $i$.
Hence, by \eqref{eq:du-sum} we have $\partial^\bsnu  u(\cdot, \bsy) = 0$,
proving the final case in \eqref{eq:deriv_bnd}.
\end{proof}

\section{Density estimation for PDE with random input}
\label{sec:de4pde}

We now consider the special case
of approximating the cdf and pdf of the quantity of interest \eqref{eq:qoi}
corresponding to the solution of the PDE
\eqref{eq:pde_0}, then give the full details of our approximation method.
For the remainder of the paper we define
\begin{equation}
\label{eq:phi-qoi}
  \varphi(y_0,\bsy) \,\coloneqq\, \calG(u(\cdot,(y_0,\bsy))),
\end{equation}
where for notational convenience we write in this section (separating
out $y_0$)
\[
  \bsy = (y_1,\ldots,y_s,y_{s+1},\ldots,y_{2s})
  = (w_1,\ldots,w_s,z_1,\ldots,z_s) = (\bsw,\bsz) \in \bbR^{2s}.
\]

Using the expansion of the quantity of interest \eqref{eq:QoI2}, we can also
write $\varphi$ as a sum
\begin{equation}
\label{eq:phi-sum}
\varphi(y_0, \bsy) \,=\,
\varphi(\bsw, \bsz) \,=\, \phibar(\bsz) +
\sum_{i = 0}^s w_i \, \varphi_i(\bsz),
\end{equation}
where $\phibar(\bsz) \coloneqq \calG(\ubar(\cdot, \bsz))$
and $ \varphi_i(\bsz) \coloneqq \calG(u_i(\cdot, \bsz))$ for $ i = 0, 1, \ldots, s$.
It then follows that the implicit function $\xi$ defining the point of discontinuity as in
\eqref{eq:xi-def} satisfies
\begin{align*}
 \varphi(\xi(t, \bsy), \bsy) \,=\,
 \overline\varphi(\bsz) + \xi(t,\bsy) \,\varphi_0(\bsz) + \sum_{i=1}^s w_i\,\varphi_i(\bsz) = t,
\end{align*}
which can be rearranged to give the explicit expression
\begin{align} \label{eq:xi-explicit}
 \xi(t,\bsy) \,=\, \frac{t-\overline\varphi(\bsz)-\sum_{i=1}^s w_i\,\varphi_i(\bsz)}{\varphi_0(\bsz)} .
\end{align}
Of course, for $\xi$ to be well defined, we require $\varphi_0(\bsz) \ne
0$ for all $\bsz$, which follows from the Strong Maximum Principle since
we assumed in Assumption~\ref{asm:pde}\ref{asm:pos} that
$\ell_0(\bsx)> 0$ for all $\bsx\in D$. Further details on this
condition, along with an explicit lower bound on $\varphi_0(\bsz)$
are given in Section~\ref{sec:mono}.

Since $\phibar(\bsz)$ and $\varphi_i(\bsz)$ for $i = 0, 1, \ldots, s$
are independent of $\bsy_{1:s} = \bsw$, by the representation \eqref{eq:phi-sum}
the derivatives of $\varphi$ with respect to $y_i = w_i$ simplify to
\[
   \partial^{i} \varphi(y_0,\bsy) \,=\, \varphi_i(\bsz) \qquad \mbox{for all } 0, 1, \ldots, s.
\]
In particular, $ \partial^{0} \varphi(y_0,\bsy) = \varphi_0(\bsz)$
and the preintegrated function $g_\mathrm{pdf}$ in \eqref{eq:g-xi} becomes
\begin{equation}
\label{eq:g_pdf}
g_\mathrm{pdf}(\bsy) \,=\, \frac{\rho(\xi(t, \bsy))}{\varphi_0(\bsz)},
\end{equation}
where now $\xi$ is given by \eqref{eq:xi-explicit}.

To approximate the cdf of $X$ using the integral formulation
\eqref{eq:cdf-y}, preintegration is performed with respect to $y_0$ as in
\eqref{eq:preint-cdfpdf}, and then a QMC rule as in \eqref{eq:qmc} is
applied to approximate the remaining $2s$-dimensional integral. The pdf
\eqref{eq:pdf-y} is approximated in the same way. In this way, the
approximations of the cdf and pdf at $t \in [t_0, t_1]$ using
preintegration followed by an $N$-point QMC rule are denoted by
\begin{align}
\label{eq:F_N}
F(t) \,&\approx\, F_N(t)
\,\coloneqq\, Q_{2s, N} (g_\mathrm{cdf})
\,=\, Q_{2s, N} \big(\Phi(\xi(t, \cdot))\big),
\\
\label{eq:f_N}
f(t) \,&\approx\; f_N(t)
\,\coloneqq\, Q_{2s, N} (g_\mathrm{pdf})
\,=\, Q_{2s, N}
\bigg( \frac{\rho(\xi(t, \cdot))}{\varphi_0}\bigg),
\end{align}
where we have simplified the preintegrated functions $g_\mathrm{cdf}$ and
$g_\mathrm{pdf}$ as in \eqref{eq:g-xi} and \eqref{eq:g_pdf}, and the
point of discontinuity is given explicitly by \eqref{eq:xi-explicit}.

In practice, to compute the approximations $F_N(t)$ in
\eqref{eq:F_N} and $f_N(t)$ in \eqref{eq:f_N}, we must evaluate
$\xi(t, \bsy)$, which in turn requires solving the PDE numerically to
obtain discrete approximations of $\phibar(\bsz)$ and $\varphi_i(\bsz)$,
e.g., using finite element methods. To ensure that the discrete
approximation of $\xi(t, \bsy)$ in \eqref{eq:xi-explicit} is well defined,
we require that the discrete approximation of $\varphi_0(\bsz)$ is
non-zero for all $\bsz$. Verifying this monotonicity condition for the
discrete approximation of $\varphi_0(\bsz)$, as well as analysing the
extra discretisation error, adds a significant amount of {technical
analysis}, and so will be handled in a separate paper \cite{Gil24}.

\section{Conditions required for preintegration theory}
\label{sec:cond_preint}

To apply the theory for preintegration to our quantity of interest \eqref{eq:phi-qoi},
we must verify that the conditions outlined in
Assumptions~\ref{asm:phi} and~\ref{asm:phi-h} are satisfied.
This will be done in the subsections to follow. 
To apply the theory for first-order QMC rules from Section~\ref{sec:qmc}, 
we only require these assumptions to 
hold for $\bsnu = \bsone$, however, we verify the conditions for general 
$\bsnu \in \N_0^{2s}$.
Similarly, since we only consider the cdf and pdf we require $q \in \{0, 1\}$ in 
Assumption~\ref{asm:phi-h}, but we give results for general $q \in \N_0$.

First, note that the Gaussian density satisfies $\rho \in C^\infty(\R)$.
Also, since $\varphi_i$ is continuous for all $i\in\{1:s\}$ and
$\varphi_0$ is strictly positive (as we will soon verify), we also have
that $U_t = \R^{2s}$ in \eqref{eq:Ut}, implying that the condition
\eqref{eq:h-lim} is no longer necessary because there is no boundary $\partial U_t$ over which the continuity  of the functions $h_{q,\boldsymbol{\eta}}(t,\boldsymbol{y})$ needs to be ensured.

\subsection{Verifying the monotonicity condition}
\label{sec:mono}

Assumption~\ref{asm:phi}\ref{itm:mono} requires our quantity of
interest to be monotone with respect to the preintegration variable
$y_0$. This monotone property simplifies the
practical implementation of preintegration by ensuring that
the point of discontinuity \eqref{eq:xi-def} is unique.
It was shown in \cite{GKS22b} to be also necessary for the smoothing theory, since if it fails to hold then there may still be singularities after preintegration.

From \eqref{eq:QoI2}, we note that
\[
  \partial^{0}\varphi(y_0,\bsy) = \varphi_0(\bsz) = \calG(u_0(\cdot,\bsy)).
\]
As $\calG$ is a linear functional, we can guarantee monotonicity if $u_0$
is strictly positive or negative for all $\bsx\in D$ and $\bsy\in
\R^{2s+1}$, and correspondingly $\calG$ is also strictly positive or
negative. Since $u_0$ is the solution to \eqref{eq:pde_0} with $\ell_0$
as the source term, by the Weak Maximum Principle (see, e.g.,
\cite{GilbTrud}) the positivity of the solution is guaranteed if
$\ell_0(\bsx)>0$ for all $\bsx\in D$, as assumed in
Assumption~\ref{asm:pde}\ref{asm:pos}.

In particular, the lower bound on the PDE solution in
Lemma~\ref{lem:u0_lower} below can be used to obtain a lower bound on
$\varphi_0$, which implies the necessary monotonicity condition and will
also be useful later when deriving the bounds $B_{q, \bseta}$ from
Assumption~\ref{asm:phi-h}.

First, we show that $u_0(\bsx, \bszero)$, i.e., the solution to
\eqref{eq:varpde} with $\bsz = \bszero$ and right-hand side $\ell_0$, has
bounded first derivatives with respect to $\bsx$.

\begin{lemma}
\label{lem:u0-reg} Suppose that Assumption~\ref{asm:pde} holds. Then
the solution to \eqref{eq:varpde} corresponding to the right-hand
side $\ell_0$ with $\bsz = \bszero$ satisfies $u_0(\cdot, \bszero) \in
W^{1, \infty}(D)$.
\end{lemma}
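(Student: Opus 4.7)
The plan is to exploit the fact that when $\bsz = \bszero$ the coefficient reduces to $a(\bsx,\bszero) = \exp(0) = 1$, so that $u_0(\cdot,\bszero)$ solves the homogeneous-Dirichlet Poisson equation
\[
  -\Delta u_0(\cdot,\bszero) \,=\, \ell_0 \quad \text{in } D,
  \qquad u_0(\cdot,\bszero) \,=\, 0 \quad \text{on } \partial D.
\]
By Assumption~\ref{asm:pde}\ref{asm:pos} we have $\ell_0\in L^\infty(D)$, and since $D$ is bounded, $\ell_0 \in L^p(D)$ for every $p\in [1,\infty)$.

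Next I would invoke standard $L^p$ elliptic regularity for the Laplacian on the class of domains permitted by Assumption~\ref{asm:pde}\ref{asm:D}. When $d\ge 2$ and $\partial D$ is $C^2$, the classical Calderón--Zygmund theory (see, e.g., Gilbarg--Trudinger) gives $u_0(\cdot,\bszero)\in W^{2,p}(D)$ for every $p<\infty$, with a bound depending on $p$ and $\|\ell_0\|_{L^p(D)}$. For the $d=2$ strictly convex polygonal case, the same conclusion follows from the corner regularity theory of Grisvard: since every interior angle is strictly less than $\pi$, no singular exponent enters the expansion at a vertex with exponent below $1$ across \emph{all} $p<\infty$, so $u_0(\cdot,\bszero)\in W^{2,p}(D)$ remains valid for every $p<\infty$. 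The case $d=1$ is immediate by direct integration.

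Finally, I would pick any $p>d$ and apply the Sobolev embedding $W^{2,p}(D) \hookrightarrow C^{1,\alpha}(\overline D)$ with $\alpha = 1 - d/p \in (0,1)$, which yields in particular $\nabla u_0(\cdot,\bszero)\in L^\infty(D)$. Combined with $u_0(\cdot,\bszero)\in V \subset L^\infty(D)$ on a bounded domain (via the same embedding), this gives $u_0(\cdot,\bszero)\in W^{1,\infty}(D)$.

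The main obstacle is not the Sobolev embedding, which is routine, but rather justifying the $W^{2,p}$-regularity uniformly across the somewhat heterogeneous family of domains allowed in Assumption~\ref{asm:pde}\ref{asm:D}; in particular, the strictly convex polygonal case has to be handled by a separate appeal to corner regularity theory rather than by the $C^2$ boundary result. In a self-contained write-up I would state the cited regularity theorem precisely for each case and only then conclude with the embedding argument.
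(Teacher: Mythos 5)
Your treatment of the $C^2$-boundary case (and $d=1$) matches the paper's: reduce to the Poisson problem (since $a(\cdot,\bszero)\equiv 1$), invoke $L^p$ elliptic regularity to get $W^{2,p}(D)$ for all $p<\infty$, then apply the Sobolev embedding. That part is fine.

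The strictly convex polygonal case, however, contains a genuine error. You claim that because every interior angle is strictly less than $\pi$, the solution satisfies $u_0(\cdot,\bszero)\in W^{2,p}(D)$ for \emph{every} $p<\infty$. This is false. At a corner with interior angle $\omega\in(\pi/2,\pi)$, the leading singular part of the solution behaves like $r^{\pi/\omega}$, whose second derivatives scale like $r^{\pi/\omega-2}$; these lie in $L^p$ near the corner only when $p < 2/(2-\pi/\omega)$, a finite threshold that tends to $2$ as $\omega\to\pi^-$. Thus the convexity assumption $\omega<\pi$ gives $W^{2,p}$ regularity only for $p$ in a range $[2,p^*)$ with $p^*>2$ but $p^*<\infty$ in general; it does not give full $W^{2,p}$ regularity for all $p$. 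The true (and relevant) consequence of $\omega<\pi$ is that the singular exponent $\pi/\omega$ exceeds $1$, so the \emph{gradient} of the singular part, which behaves like $r^{\pi/\omega-1}$, is bounded --- and it is precisely this fact that yields $W^{1,\infty}$, not unconstrained $W^{2,p}$ regularity.

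Two correct routes are available. One is the fix to your argument: use Grisvard's theory to obtain $W^{2,p}(D)$ for \emph{some} $p>2$ (specifically $2<p<p^*$ with $p^*$ determined by the largest angle), which already suffices for the embedding $W^{2,p}(D)\hookrightarrow C^{1,1-2/p}(\overline D)\subset W^{1,\infty}(D)$. The other is what the paper actually does: invoke the Grisvard decomposition $u_0(\cdot,\bszero)=\widetilde u+\sum C_{k,-1}S_{k,-1}$ with $\widetilde u\in W^{2,p}(D)$ for a judiciously chosen $p>2$, handle $\widetilde u$ by Sobolev embedding, and verify by an explicit computation that the singular functions $S_{k,-1}$ have bounded gradients because $\pi/\omega_k-1>0$. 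As written, your intermediate claim would not survive scrutiny for polygons with an obtuse interior angle.
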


\begin{proof}
Since the smoothness assumptions on the domain are different in each
dimension, we first prove the result for $d = 1$ or $d =2,3$ with a $C^2$ boundary,
then for $D$ a convex polygon when $d = 2$ separately.

Under Assumption~\ref{asm:pde}\ref{asm:D}, $D$ is a bounded, open interval
when $d=1$, and $D$ is convex with a $C^2$ boundary when $d =$ 2,
3. Hence, since $\ell_0 \in L^\infty(D)$ and $a(\cdot, \bszero) =
a_0 \in W^{1, \infty}(D)$, it follows from
\cite[Theorem~9.15]{GilbTrud} that $u_0(\cdot,\bszero) \in W^{2,
p}(D)$ for all $1 < p < \infty$. It then follows by the Sobolev Embedding
Theorem \cite[Corollary~7.11]{GilbTrud} that $u_0(\cdot,\bszero) \in
C^1(\overline{D}) \subset W^{1, \infty}(D)$, as required.

For $d = 2$ and $D$ a convex polygon, following \cite{Gris11} we can
write $u_0(\bsx, \bszero)$ as the sum of a $W^{2, p}(D)$ function 
(for $p > 2$ specified below) and an
expansion of the singularities at each of the corners. Let $K$ denote the
number of sides in the strictly convex polygon $D$ and let $\omega_k <
\pi$, for $k = 1, 2, \ldots, K$, denote a labelling of the angles. Choose
$p > 2$ such that $p^{-1} + q^{-1} = 1$ and $q > 1$ satisfies $2\omega_k
/(q\pi) \not \in \Z$ for all $k = 1, 2, \ldots, K$. It follows from
\cite[Theorem~4.4.3.7]{Gris11} that there exists $\widetilde{u} \in W^{2,
p}(D)$ and $C_{k, m} \in \R$ such that
\begin{equation}
\label{eq:u0-singular}
 u_0(\bsx, \bszero) \,=\, \widetilde{u}(\bsx) +
\sum_{k = 1}^K\sum_{\substack{m \in \Z\\ -2/q < \lambda_{k, m} < 0 \\ \lambda_{k, m} \neq -1}}
C_{k, m}\, S_{k, m}(\bsx),
\end{equation}
where for $m \in \Z$, $k = 1, 2, \ldots, K$, we define $\lambda_{k, m} \coloneqq m \pi / \omega_k$
and the $m$th singularity at the $k$th corner is given by
\[
S_{k, m}(\bsx) = S_{k, m}(r_k\, e^{i \theta_k})
\,\coloneqq\, \frac{r_k^{-\lambda_{k, m}}}{\sqrt{\omega_k}\, \lambda_{k, m}}
\cos (\lambda_{k, m}\,\theta_k + \tfrac{\pi}{2})\, \eta_k(r_k\, e^{i\theta_k})\,.
\]
We have written the input $\bsx$ in polar coordinates $(r_k, \theta_k)$
centred at the $k$th corner and $\eta_k$ is a smooth cutoff function,
centred at the $k$th corner and with support that only intersects the
adjacent edges. Precise details can be found in \cite{Gris11}.

The Sobolev Emdedding Theorem \cite[Corollary~7.11]{GilbTrud} again implies that
$\widetilde{u} \in W^{2, p}(D) \subset W^{1, \infty}(D)$, because $p > d = 2$.

Since $\omega_k < \pi$ and $q > 1$, for $k = 1, 2, \ldots, K$ we have
$\lambda_{k, -1} < -1$, and for $m \leq -2$ we have $\lambda_{k, m} <
-m \le -2 < -2/q$. Therefore, in \eqref{eq:u0-singular} it is only
possible for $\lambda_{k, -1}$ to be in $(-2/q, 0)\setminus \{-1\}$.
Similarly, if $\omega_k \leq \pi q/2$ then $\lambda_{k, -1} \leq -2/q$,
otherwise if $\omega_k > \pi q/2$ then $-2/q < \lambda_{k, -1} < -1$.
Hence, we can simplify \eqref{eq:u0-singular} to
\begin{equation}
\label{eq:u0-singular2}
u_0(\cdot, \bszero) \,=\, \widetilde{u}(\bsx) +
\sum_{\substack{k = 1\\ \omega_k > \pi q/2}}^K
C_{k, -1} \,S_{k, -1}(\bsx).
\end{equation}

In polar coordinates, the gradient of the singularity at the $k$th corner, $S_{k, -1}$, is
\[
 \nabla S_{k, -1}(r_k\,e^{i\theta_k}) \,=\,
 \bigg(
 \pd{}{}{r_k} S_{k, -1}(r_k\, e^{-\theta_k}),
 \frac{1}{r_k} \pd{}{}{\theta_k} S_{k, -1}(r_k\, e^{i\theta _k})
 \bigg)^\top,
\]
where
\begin{align*}
\pd{}{}{r_k} S_{k, -1}(r_k\, e^{-\theta_k})
\,=\, &-\frac{r_k^{-\lambda_{k, -1} - 1}}{\sqrt{\omega_k}}
\cos(\lambda_{k, -1}\, \theta_k + \tfrac{\pi}{2})\, \eta_k(r_k\, e^{-\theta_k})
\\
&+ \frac{e^{i\theta_k}\, r_k^{-\lambda_{k, -1}}}{\sqrt{\omega_k}\, \lambda_{k, -1}}
\cos(\lambda_{k, -1}\, \theta_k + \tfrac{\pi}{2})\, \eta_k'(r_k\, e^{-\theta_k}),
\end{align*}
which is bounded since $\lambda_{k, -1} < -1$ implies that $-\lambda_{k, -1} - 1 > 0$
and $\eta_k$ is a smooth cutoff function. Similarly, we have
\begin{align*}
\frac{1}{r_k} \pd{}{}{\theta_k} S_{k, -1}(r_k\, e^{i\theta _k})
\,=\, &-\frac{r_k^{-\lambda_{k, m} - 1}}{\sqrt{\omega_k}}
\sin(\lambda_{k, -1}\, \theta_k + \tfrac{\pi}{2})\, \eta_k(r_k\, e^{i\theta_k})
\\
&+\frac{r_k^{-\lambda_{k, -1}}\, ie^{i\theta_k} }{\sqrt{\omega_k}\, \lambda_{k, m}}
\cos(\lambda_{k, -1}\, \theta_k + \tfrac{\pi}{2})\, \eta_k' (r_k\, e^{i\theta_k})\,,
\end{align*}
which is again bounded since $\lambda_{k, -1} < -1$ and $\eta_k$ is
smooth. Hence,  $S_{k, -1} \in W^{1, \infty}(D)$ for all $k = 1, 2,
\ldots, K$, and in turn from \eqref{eq:u0-singular2} it follows that
$u_0(\cdot, \bszero) \in W^{1, \infty}(D)$.
\end{proof}

\begin{remark}
For $d = 3$, it may be possible to use similar arguments as in $d = 2$
to obtain the required regularity under the relaxed assumption of a
strictly convex polyhedral domain, instead of a $C^2$ domain as in
Assumption~\ref{asm:pde}\ref{asm:D}. However, such analysis is beyond the
scope of this paper.
\end{remark}

The extra regularity of $u_0(\cdot, \bszero)$ allows us to prove the
following lower bound using the Weak Maximum Principle (e.g.,
\cite[Theorem~8.1]{GilbTrud}).

\begin{lemma}
\label{lem:u0_lower} Suppose that Assumption~\ref{asm:pde} holds, thus
$\ell_{0, \mathrm{inf}} \coloneqq \inf_{\bsx \in D} \ell_0(\bsx) > 0$. 
Then the solution $u_0(\cdot, \bsz) \in V$ to \eqref{eq:varpde} with
right-hand side $\ell_0$ satisfies
\[
u_0(\bsx, \bsz) \,\geq\, \frac{u_0(\bsx, \bszero)}{K_0(\bsz)}
\quad \text{for almost all } \bsx \in D \mbox{ and all } \bsz \in \R^{s},
\]
where
\begin{align} \label{eq:K0}
K_0(\bsz) \,\coloneqq\, \amax(\bsz)\bigg( 1
+ \frac{\|u_0(\cdot, \bszero)\|_{W^{1, \infty}(D)}}{\ell_{0, \mathrm{inf}}}
\sum_{j = 1}^s |z_j|\, \|a_j\|_{W^{1, \infty}(D)}
\bigg).
\end{align}
\end{lemma}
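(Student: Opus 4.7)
The plan is to apply the weak maximum principle to the difference
$v(\bsx) \coloneqq u_0(\bsx,\bsz) - u_0(\bsx,\bszero)/K_0(\bsz)$.
Since both summands lie in $V$ and vanish on $\partial D$, so does $v$, and showing $v\ge 0$ almost everywhere in $D$ will establish the claim. The key is to show that $v$ is a weak supersolution of the operator $L_\bsz w \coloneqq -\nabla\cdot(a(\cdot,\bsz)\nabla w)$, i.e., $\calA(\bsz; v,\phi)\ge 0$ for every nonnegative $\phi\in V$; once this is in hand, the standard weak maximum principle (e.g.\ applied to the admissible test function $\phi = \max(-v,0)\in V$, combined with coercivity of $\calA(\bsz;\cdot,\cdot)$) yields $v\ge 0$ almost everywhere.

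To compute $\calA(\bsz; v,\phi)$, I would use the variational identity $\calA(\bsz; u_0(\cdot,\bsz),\phi) = \langle\ell_0,\phi\rangle$. The subtle term is $\calA(\bsz;u_0(\cdot,\bszero),\phi) = \int_D a(\bsx,\bsz)\,\nabla u_0(\bsx,\bszero)\cdot\nabla\phi(\bsx)\,\rd\bsx$. Rather than integrating $a(\cdot,\bsz)\nabla u_0(\cdot,\bszero)$ by parts directly (which would require second derivatives of $u_0(\cdot,\bszero)$ beyond what Lemma~\ref{lem:u0-reg} guarantees), I would rewrite $a\nabla\phi = \nabla(a\phi) - \phi\,\nabla a$ and use $a(\cdot,\bsz)\phi\in V$ (since $a(\cdot,\bsz)\in W^{1,\infty}(D)$ by Assumption~\ref{asm:pde}\ref{asm:a} and $\phi\in H_0^1$) as a legitimate test function in the variational formulation of $-\Delta u_0(\cdot,\bszero) = \ell_0$. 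This yields
\begin{align*}
\calA(\bsz; v,\phi) \,=\, \int_D \ell_0(\bsx)\Big[1-\tfrac{a(\bsx,\bsz)}{K_0(\bsz)}\Big]\phi\,\rd\bsx
\,+\, \frac{1}{K_0(\bsz)}\int_D \phi(\bsx)\,\nabla u_0(\bsx,\bszero)\cdot\nabla a(\bsx,\bsz)\,\rd\bsx.
\end{align*}

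Finally, I would bound the two terms pointwise. Using $\nabla a(\bsx,\bsz) = a(\bsx,\bsz)\sum_{j=1}^s z_j\nabla a_j(\bsx)$ together with $a(\bsx,\bsz)\le\amax(\bsz)$ and Lemma~\ref{lem:u0-reg} (which gives $\nabla u_0(\cdot,\bszero)\in L^\infty(D)$), the second integrand is at least
$-(\amax(\bsz)/K_0(\bsz))\,\|u_0(\cdot,\bszero)\|_{W^{1,\infty}(D)}\,\sum_j |z_j|\,\|a_j\|_{W^{1,\infty}(D)}\,\phi(\bsx)$.
The first integrand is at least $\ell_{0,\inf}(1 - \amax(\bsz)/K_0(\bsz))\phi(\bsx)$. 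Substituting the definition of $K_0(\bsz)$ in \eqref{eq:K0} one checks that
$\ell_{0,\inf}(K_0(\bsz)-\amax(\bsz))/K_0(\bsz)$
equals exactly
$(\amax(\bsz)/K_0(\bsz))\,\|u_0(\cdot,\bszero)\|_{W^{1,\infty}(D)}\,\sum_j|z_j|\,\|a_j\|_{W^{1,\infty}(D)}$,
so the two lower bounds cancel and the integrand is nonnegative pointwise. Since $\phi\ge 0$, we conclude $\calA(\bsz;v,\phi)\ge 0$, completing the proof.

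The main obstacle is the regularity management: $u_0(\cdot,\bszero)$ is only known to lie in $W^{1,\infty}(D)$, so one cannot directly test against $\Delta u_0(\cdot,\bszero)$. The rewriting $a\nabla\phi = \nabla(a\phi)-\phi\nabla a$ and using $a\phi$ as a test function in the weak formulation is the decisive trick that keeps all computations in $H_0^1$ while still making the maximum principle machinery applicable. The second subtlety is that the constant $K_0(\bsz)$ has to be chosen precisely so that the positive first term exactly dominates the worst-case negative contribution from the cross term $\nabla u_0(\cdot,\bszero)\cdot\nabla a(\bsx,\bsz)$; any weaker bound on that cross term would force a larger $K_0$ and yield a weaker lower bound on $u_0(\cdot,\bsz)$.
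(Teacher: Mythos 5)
Your proof is correct, and the overall strategy (apply the weak maximum principle to the difference, then engineer $K_0(\bsz)$ so that the ``bad'' term involving $\nabla a\cdot\nabla u_0(\cdot,\bszero)$ is exactly absorbed) coincides with the paper's. The genuine difference lies in how the crucial term $\calA(\bsz;u_0(\cdot,\bszero),\phi)$ is handled. The paper integrates by parts directly and invokes the strong form $-\nabla\cdot\bigl(a(\cdot,\bszero)\nabla u_0(\cdot,\bszero)\bigr)=\ell_0$ almost everywhere, which tacitly uses $u_0(\cdot,\bszero)\in H^2(D)$ --- true here (by convexity, and visible in the proof of Lemma~\ref{lem:u0-reg}), but more than the stated conclusion $u_0(\cdot,\bszero)\in W^{1,\infty}(D)$ of that lemma. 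Your route stays entirely weak: you split $a\nabla\phi=\nabla(a\phi)-\phi\,\nabla a$ and feed $a(\cdot,\bsz)\phi\in V$ (legitimate since $a(\cdot,\bsz)\in W^{1,\infty}(D)$ and $\phi\in H^1_0$) into the variational identity for $u_0(\cdot,\bszero)$, so that $\langle\ell_0,a\phi\rangle$ appears without ever differentiating $u_0(\cdot,\bszero)$ twice. This buys you an argument that needs only the regularity that Lemma~\ref{lem:u0-reg} literally states, which is marginally cleaner especially in the convex-polygon case where the singular functions $S_{k,-1}$ complicate a direct $W^{2,p}$ claim. Your verification that the bound on the cross term exactly matches $\ell_{0,\inf}(K_0(\bsz)-\amax(\bsz))/K_0(\bsz)$, and your self-contained argument for the weak maximum principle via $\phi=\max(-v,0)$ plus coercivity, are both correct.
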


\begin{proof}
We prove the result by applying the Weak Maximum Principle to the
function
\begin{equation}
\label{eq:uhat}
\uhat(\bsx, \bsz) \,\coloneqq\, \frac{u_0(\bsx, \bszero)}{K_0(\bsz)} - u_0(\bsx, \bsz)
\;\in\; V\,.
\end{equation}

First we show that $\calA(\bsz; \uhat(\cdot, \bsz), v) \leq 0$ for all $v
\in V$ such that $v \geq 0$. By considering the bilinear form
\eqref{eq:bilin} (for parameter value $\bsz$) with argument $u_0(\cdot,
\bszero)$ (i.e., the solution with parameter value $\bszero$),
integrating by parts, and applying the product rule, we obtain
\begin{align*}
&\calA(\bsz; u_0(\cdot, \bszero), v)
\,=\,
\int_D a(\bsx, \bsz)\, \nabla u_0(\bsx, \bszero) \cdot \nabla v(\bsx) \, \rd \bsx
\\
&\,=\, \int_D -\nabla \cdot \big(a(\bsx, \bsz)\, \nabla u_0(\bsx, \bszero)\big) \, v(\bsx) \, \rd \bsx
\\
&\,=\, \int_D \bigg(- a(\bsx, \bsz) \nabla \cdot \big[ a(\bsx,\bszero) \nabla u_0(\bsx, \bszero)\big]
 - \nabla a(\bsx, \bsz) \cdot \nabla u_0(\bsx, \bszero) \bigg)v(\bsx) \, \rd \bsx
\\
&\,=\, \int_D \bigg( a(\bsx,\bsz)\,\ell_0(\bsx) -
      a(\bsx, \bsz) \sum_{j = 1}^s z_j \nabla a_j(\bsx) \cdot \nabla u_0(\bsx,
      \bszero) \bigg)v(\bsx) \, \rd \bsx\,,
\end{align*}
where we inserted $a(\bsx,\bszero)=1$ and used the fact that
$u_0(\bsx, \bszero)$ satisfies the strong form \eqref{eq:pde_0}, with
$\bsz = \bszero$ and right-hand side $\ell_0$, almost everywhere in $D$.

Taking the absolute value and using the fact that $v(\bsx) \geq 0$
and $\ell_0(\bsx)> 0$ gives the upper bound
\begin{align} \label{eq:Au0<Auz}
&\calA(\bsz; u_0(\cdot, \bszero), v) \,\leq\,
|\calA(\bsz; u_0(\cdot, \bszero), v) |
\nonumber\\
&\leq\,
\int_D \bigg(a(\bsx,\bsz)\,
\ell_0(\bsx)
+
a(\bsx, \bsz) \frac{\ell_0(\bsx)}{\ell_0(\bsx)}
\sum_{j = 1}^s |z_j|\, |\nabla a_j(\bsx)|\, |\nabla u_0(\bsx, \bszero) |
\bigg) v(\bsx) \, \rd \bsx
\nonumber\\
&\leq\,
\amax(\bsz)\bigg( 1
+ \frac{\|u_0(\cdot, \bszero)\|_{W^{1, \infty}(D)}}{\ell_{0,\inf}}
\sum_{j = 1}^s |z_j|\, \|a_j\|_{W^{1, \infty}(D)} \bigg) \langle \ell_0, v\rangle
\nonumber\\
&= K_0(\bsz)\, \calA(\bsz; u_0(\cdot, \bsz), v),
\end{align}
where by Lemma~\ref{lem:u0-reg} we have $u_0(\cdot, \bszero) \in W^{1, \infty}(D)$,
and we used the definition \eqref{eq:K0} of $K_0(\bsz)$, as well
as the fact that $u_0(\cdot, \bsz)$ satisfies \eqref{eq:varpde} with the
right hand side $\ell_0$.

Rearranging \eqref{eq:Au0<Auz}, it follows that the function $\uhat(\cdot, \bsz)$ in \eqref{eq:uhat} satisfies
\[
\calA(\bsz, \uhat(\cdot, \bsz), v) \,\leq\, 0
\quad \text{for all } v \in V, v \geq 0.
\]
The Weak Maximum Principle, e.g., \cite[Theorem~8.1]{GilbTrud},
then implies that, for all $\bsz$
\[
\sup_{\bsx \in D} \uhat(\bsx, \bsz) \,\leq\, \sup_{\bsx \in \partial D} \uhat(\bsx, \bsz) \,=\, 0\,,
\]
since $\uhat(\cdot, \bsz) \in V$. This implies $\uhat(\bsx, \bsz) \leq 0$
for almost all $\bsx \in D$, which can be rearranged to give the
required result.
\end{proof}

Finally,
in the case where the quantity of interest \eqref{eq:qoi} is given by a positive linear
functional, i.e., $v \geq 0$ implies $\calG(v) \geq 0$,
this lower bound can be used to prove a lower bound on the term $\varphi_0$.

\begin{corollary}
\label{cor:lower}
Let $\calG \in V^*$ be a positive linear functional, then
\begin{equation*}
\varphi_0(\bsz) \,=\, \calG(u_0(\cdot, \bsz)) \,\geq\,
\frac{\calG(u_0(\cdot, \bszero))}{K_0(\bsz)}
\,=\, \frac{\varphi_0(\bszero)}{K_0(\bsz)}.
\end{equation*}
\end{corollary}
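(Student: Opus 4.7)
The plan is to apply Lemma~\ref{lem:u0_lower} and then invoke the positivity and linearity of $\calG$ in a direct manner. There is no real obstacle here since all the hard work has been done in establishing Lemma~\ref{lem:u0_lower}; the corollary is essentially a one-line consequence.

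First, I would recall from \eqref{eq:phi-sum} that $\varphi_0(\bsz) = \calG(u_0(\cdot,\bsz))$ and $\varphi_0(\bszero) = \calG(u_0(\cdot,\bszero))$. By Lemma~\ref{lem:u0_lower}, for every $\bsz\in\R^s$ the function
\[
v_{\bsz}(\bsx) \,\coloneqq\, u_0(\bsx,\bsz) - \frac{u_0(\bsx,\bszero)}{K_0(\bsz)}
\]
lies in $V$ and satisfies $v_{\bsz}(\bsx) \ge 0$ for almost all $\bsx\in D$, since the denominator $K_0(\bsz) > 0$ (recall that $\amax(\bsz)>0$ and each term in the parenthesis of \eqref{eq:K0} is nonnegative, so $K_0(\bsz) \ge \amax(\bsz) > 0$).

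Next I would apply the positive linear functional $\calG$ to $v_\bsz$: positivity gives $\calG(v_\bsz) \ge 0$, while linearity gives
\[
\calG(v_\bsz) \,=\, \calG(u_0(\cdot,\bsz)) - \frac{\calG(u_0(\cdot,\bszero))}{K_0(\bsz)} \,=\, \varphi_0(\bsz) - \frac{\varphi_0(\bszero)}{K_0(\bsz)}.
\]
Rearranging yields the stated bound. Note that $\calG(u_0(\cdot,\bszero))>0$ by Lemma~\ref{lem:u0_lower} applied at $\bsz=\bszero$ (which forces $u_0(\cdot,\bszero)$ itself to be nonnegative and not identically zero, since $\ell_{0,\inf}>0$) together with the positivity of $\calG$, which ensures that this lower bound is genuinely useful for establishing the monotonicity condition \eqref{eq:mono}.
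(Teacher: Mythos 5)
Your proof is correct and matches the paper's argument: the function you call $v_\bsz$ is exactly $-\uhat(\cdot,\bsz)$ from the proof of Lemma~\ref{lem:u0_lower}, and the paper likewise concludes by applying the positive linear functional $\calG$ to this nonnegative function and rearranging. One minor side remark: your closing parenthetical that Lemma~\ref{lem:u0_lower} at $\bsz=\bszero$ forces $u_0(\cdot,\bszero)\geq 0$ is not quite right, since at $\bsz=\bszero$ we have $K_0(\bszero)=1$ and the lemma gives only the trivial inequality $u_0(\bsx,\bszero)\geq u_0(\bsx,\bszero)$; nonnegativity (indeed strict positivity) of $u_0(\cdot,\bszero)$ instead comes directly from the Maximum Principle with $\ell_0>0$, as discussed earlier in Section~\ref{sec:mono} — but this is tangential and does not affect the corollary's proof.
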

\begin{proof}
The proof of Lemma~\ref{lem:u0_lower} implies that $-\uhat(\cdot, \bsz) \geq 0$, cf. \eqref{eq:uhat},
then since $\calG$ is positive and linear
\[
0 \,\leq\, \calG(-\uhat(\cdot, \bsz)) \,=\, \calG(u_0(\cdot, \bsz))
- \frac{\calG(u_0(\cdot, \bszero))}{K_0(\bsz)}\,,
\]
which can be rearranged to give the required result.
\end{proof}

\begin{remark}
\label{rem:extensions}
Given the importance of the monotonicity condition for preintegration theory
and our method, one may wonder if it holds for different settings
for the source term, e.g., a different form of $\ell(\bsx, \omega)$ or a deterministic source term.
If the source term is a nonlinear function of the sum in \eqref{eq:lxw}, 
e.g., $\ell(\bsx, \omega) = L(\sum_{j = 0}^s y_j \ell_j(\bsx))$ for a sufficiently smooth function $L: \R \to \R$,
then the derivative of the solution with respect to $y_0$ again satisfies the 
variational equation \eqref{eq:varpde} but with a different source term,
\[
\calA \bigg( \bsz; \frac{\partial}{\partial y_0} u(\cdot, \bsy), v \bigg) = \bigg\langle \ell_0 L'\bigg(\sum_{j = 0}^s y_j \ell_j\bigg), v \bigg\rangle
\quad \text{for all } v \in V.
\]
If this nonlinear function $L$ is monotone, then similar arguments
as above can be used to show that the quantity of interest is also 
monotone.

Alternatively, for a deterministic source term, $\ell(\bsx, \omega) = \ell(\bsx)$,
clearly, the preintegration variable must be selected from the diffusion coefficient.
In this case, the monotonicity condition \eqref{eq:mono} must also be verified for the specific choice of coefficient in order for the preintegration theory to apply.
Consider a simple example of the PDE \eqref{eq:pde_0} with deterministic source term $\ell(\bsx)$ and a simple stochastic coefficient $a(\bsx, \bsy) = e^{y_0 + a_1(\bsx) y_1}$, i.e., $-\nabla\cdot(e^{y_0+a_1(\bsx)y_1}\nabla u(\bsx,\bsy))=\ell(\bsx)$. Since $e^{y_0}$ is independent of $\bsx$, it possible to rearrange the PDE as $-\nabla~\cdot~(e^{a_1(\bsx)y_1}\nabla u(\bsx,\bsy))= e^{-y_0}\ell(\bsx)$, which is in the same form as the problem this paper considers except with a lognormally distributed random source term. Hence, the monotonicity condition holds in this case.
With a less trivial diffusion term, e.g., $-\nabla\cdot(e^{a_0(\bsx)y_0+a_1(\bsx)y_1}\nabla u(\bsx,\bsy))=\ell(\bsx)$, determining monotonicity becomes harder.

For both cases above, the point of discontinuity $\xi(t, \bsy)$ satisfying 
\eqref{eq:xi-def} may not have a closed form, in which case to perform
preintegration in practice $\xi(t, \bsy)$ must be found numerically, which
has previously been considered in \cite{BayB-HTemp22,GKLS18}.
\end{remark}

\subsection{Verifying the smoothness of the quantity of interest}

In this section, we verify the remaining smoothness conditions from
Assumption~\ref{asm:phi} on the quantity of interest \eqref{eq:phi-qoi}
and the density $\rho$.

First, since $\varphi$ is linear with respect to $y_0 = w_0$ in
\eqref{eq:phi-sum}, it follows that $\varphi(y_0, \bsy) \to \infty$ as
$y_0 \to \infty$. Thus, Assumption~\ref{asm:phi}\ref{itm:inf} is
satisfied.

Next, we verify the ``Sobolev'' and ``classical'' smoothness conditions in
Assumption~\ref{asm:phi}\ref{itm:phi-smooth} separately. The bounds on the
derivatives of the solution in \eqref{eq:deriv_bnd} can be used to show
that $\varphi \in \calH^{\widetilde{\bsnu}}_{2s + 1}$ for any
$\widetilde{\bsnu}\in \N_0^{2s + 1}$, for both exponential
\eqref{eq:psi-exp} and Gaussian \eqref{eq:psi-Gauss} weight functions.
For the classical smoothness condition, it was shown in \cite[Proposition~3.8]{DunNguSchwZech23} that
$\ubar(\cdot, \bsz) \in V$ and each $u_i(\cdot, \bsz) \in V$ for $ i = 0,
1, \ldots, s$ are holomorphic in a complex neighbourhood of $\bsz \in
\R^s$. It follows that $\phibar(\bsz), \varphi_i(\bsz) \in \R$ are also
holomorphic in a complex neighbourhood of $\bsz \in \R^s$ and hence, the
derivatives of $\phibar, \varphi_i$ of all orders are continuous on
$\R^s$. Since $\varphi(\bsy)$ can be written as a sum as in
\eqref{eq:phi-sum}, it follows that $\varphi \in
C^{\widetilde{\bsnu}}(\R^{2s+1})$ for any $\widetilde{\bsnu} \in \N_0^{2s
+ 1}$. Hence, $\varphi$ satisfies
Assumption~\ref{asm:phi}\ref{itm:phi-smooth} for any $\bsnu \in
\N_0^{2s}$.

Finally, since $\rho$ is the Gaussian density, it is in $C^\infty(\R)$ and
so $\rho$ clearly satisfies $\rho\in C^{|\bsnu|}(\bbR)$
for any $\bsnu \in \N_0^{2s}$ as required.
  	
\subsection{Bounding the derivative terms}

Having verified Assumption~\ref{asm:phi} for our quantity of interest
\eqref{eq:phi-qoi}, we now  verify Assumption~\ref{asm:phi-h} by obtaining explicit constants $B_{q,\bseta}$ in
\eqref{eq:h-int} for all functions $h_{q,\bseta}(t,\bsy)$ of the form
\eqref{eq:h-form}. These functions represent typical functions that arise
from taking mixed derivatives of the preintegrated functions
\eqref{eq:g-xi}, and the constants $B_{q,\bseta}$ then provide bounds on
the $\calH_{2s}^{\bsnu}$-norm of the preintegrated functions for the cdf
and pdf, see \eqref{eq:norm-cdf} and \eqref{eq:norm-pdf}.

\begin{lemma}\label{lem:h_bound}
Let $\varphi$ be     the  quantity of interest \eqref{eq:phi-qoi} for the
PDE problem \eqref{eq:varpde}. Given $q\in\bbN_0$ 
and $\bseta\in\bbN_0^{2s}$, for $t\in
[t_0,t_1]$ and $\bsy = (\bsw,\bsz)\in \R^{2s}$, the functions
$h_{q,\bseta}(t,\bsy)$ of the form \eqref{eq:h-form} can be bounded by
\begin{align} \label{eq:h-bound}
 |h_{q,\bseta}(t,\bsy)|
 &\leq 
		\frac{1}{\sqrt{2\pi}}\frac{|\bseta|!\sqrt{(|\bseta| + q - 1)!}}{(\ln 2)^{|\bseta|}}
       \bigg(\prod_{j=1}^s b_j^{\eta_{j+s}} \bigg)
      \bigg(\prod_{i=1}^s c_i^{\eta_{i}} \bigg) 
\\ \nonumber
 &\quad\cdot \frac{1}{\varphi_0(\bsz)^{4|\bseta|+3q - 2}}
 \bigg(\frac{c_0\|\calG\|_{V^*}}{a_{\min}(\bsz)}
 \bigg[|t| + \frac{2\|\calG\|_{V^*}}{a_{\min}(\bsz)}\bigg(\bar{c} + c_0 + 1 + \sum_{i=1}^s c_i |w_i|\bigg)
 \bigg]\bigg)^{2|\bseta| + q - 1}.
\end{align}
\end{lemma}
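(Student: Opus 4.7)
The plan is to bound each of the three constituents in the definition of $h_{q,\bseta}(t,\bsy)$ in \eqref{eq:h-form}---the Gaussian derivative $\rho^{(\beta)}(\xi)$, the product $\prod_{\ell=1}^r \partial^{\bsalpha_\ell}\varphi(\xi,\bsy)$, and the denominator $[\partial^0\varphi(\xi,\bsy)]^{r+q}=\varphi_0(\bsz)^{r+q}$---and then consolidate the resulting factors using the combinatorial constraints on $(r,\bsalpha,\beta)$.

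For the Gaussian derivative I would write $\rho^{(\beta)}(y)=(-1)^\beta H_\beta(y)\rho(y)$ (probabilist's Hermite polynomials) and invoke Cram\'er's inequality $|H_\beta(y)|\lesssim \sqrt{\beta!}\,e^{y^2/4}$ to obtain the uniform pointwise estimate $|\rho^{(\beta)}(\xi)|\lesssim \sqrt{\beta!}/\sqrt{2\pi}$; since $\beta\le|\bseta|+q-1$ this produces the prefactor $\tfrac{1}{\sqrt{2\pi}}\sqrt{(|\bseta|+q-1)!}$ appearing in the target. For each mixed derivative $\partial^{\bsalpha_\ell}\varphi(\xi,\bsy)$ I would use $\varphi=\calG(u)$ together with Theorem~\ref{thm:deriv_bnd}, evaluating $y_0$ at $\xi(t,\bsy)$, and split into the nonvanishing cases $\bsalpha_{\ell,\bsw}\in\{\bszero,\bse_0,\bse_1,\ldots,\bse_s\}$ (the derivative vanishes as soon as $|\bsalpha_{\ell,\bsw}|\ge 2$). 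In the pure $\bsz$-case ($\bsalpha_{\ell,\bsw}=\bszero$) the bound \eqref{eq:deriv_bnd} contains the factor $\bar c+c_0|\xi(t,\bsy)|+\sum_{i=1}^s c_i|w_i|$, where $|w_0|$ has been replaced by $|\xi|$, which I would control via the explicit formula \eqref{eq:xi-explicit} and the a priori estimates $|\phibar(\bsz)|\le\|\calG\|_{V^*}\bar c/\amin(\bsz)$, $|\varphi_i(\bsz)|\le\|\calG\|_{V^*}c_i/\amin(\bsz)$ (which follow from \eqref{eq:LaxMil-G} applied to $\ubar$ and $u_i$) to get
$$
|\xi(t,\bsy)|\;\le\;\frac{1}{\varphi_0(\bsz)}\left[|t|+\frac{\|\calG\|_{V^*}}{\amin(\bsz)}\Big(\bar c+\sum_{i=1}^s c_i|w_i|\Big)\right].
$$
Strict positivity of $\varphi_0(\bsz)$, needed for the denominator, is guaranteed by Corollary~\ref{cor:lower}.

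Next I would combine the three constituent bounds. The constraint $\sum_\ell\bsalpha_{\ell,\bsz}=\bseta_\bsz$ together with submultiplicativity of factorials $\prod_\ell|\bsalpha_{\ell,\bsz}|!\le|\bseta_\bsz|!\le|\bseta|!$ yields the $|\bseta|!/(\ln 2)^{|\bseta|}$ factor and $\prod_{j=1}^s b_j^{\eta_{s+j}}$, while the constraints $\sum_\ell\alpha_{\ell,i}=\eta_i$ for $i\in\{1{:}s\}$ yield $\prod_{i=1}^s c_i^{\eta_i}$ through the $\bsalpha_{\ell,\bsw}=\bse_k$ ($k\ge 1$) factors. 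The $\bsalpha_{\ell,\bsw}=\bse_0$ and $\bsalpha_{\ell,\bsw}=\bszero$ factors must then be aligned into a common per-factor expression of the form $\bigl(c_0\|\calG\|_{V^*}/(\amin\varphi_0)\bigr)\,M$, with $M\coloneqq|t|+(2\|\calG\|_{V^*}/\amin)\bigl(\bar c+c_0+1+\sum c_i|w_i|\bigr)$, using the a priori upper bound $\varphi_0(\bsz)\le\|\calG\|_{V^*}c_0/\amin(\bsz)$ and the slack provided by the $+c_0+1$ inside $M$. Raising this common per-factor to the power $r\le 2|\bseta|+q-1$ and multiplying by $\varphi_0^{-(r+q)}$ from the denominator of $h_{q,\bseta}$ gives a total power of $\varphi_0^{-1}$ of at most $2r+q\le 4|\bseta|+3q-2$, matching the target exponent.

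The main obstacle is the final reconciliation of exponents: when $r$ or $\beta$ is strictly below its maximum, the intermediate bound has smaller exponents of $\varphi_0^{-1}$, $c_0\|\calG\|_{V^*}/\amin$, and $M$ than the uniform ones stated in \eqref{eq:h-bound}. Upgrading to the maximal exponents requires repeated use of the two universally valid inequalities $\|\calG\|_{V^*}c_0/\amin\ge\varphi_0$ (from Lax--Milgram applied to $u_0$) and $M\ge 2\|\calG\|_{V^*}/\amin$, which together let one insert the missing multiplicative factors without loss. The combinatorial bookkeeping across Cases $\bszero$, $\bse_0$, and $\bse_k$ is delicate but essentially mechanical once these per-factor bounds are in place.
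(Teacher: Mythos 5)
Your proposal follows essentially the same route as the paper: bound $\rho^{(\beta)}$ via the Hermite representation (the paper cites Indritz for $\|\rho^{(\beta)}\|_{L^\infty}\le\sqrt{\beta!/(2\pi)}$, equivalent to Cram\'er's inequality), bound each $\partial^{\bsalpha_\ell}\varphi$ via Theorem~\ref{thm:deriv_bnd} at $y_0=\xi(t,\bsy)$, control $|\xi|$ from \eqref{eq:xi-explicit} and the a priori estimates, then telescope the factorial and product factors using $\sum_\ell(\bsalpha_\ell)_{1:2s}=\bseta$. These are exactly the paper's ingredients.

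One point deserves attention. The paper does the upgrade of the exponent $r$ to its maximum $2|\bseta|+q-1$ \emph{before} substituting the bound on $\max(S(\bsy),c_0,1)$: at that stage the per-factor is $\max(S,c_0,1)\|\calG\|_{V^*}/(\varphi_0\amin)\ge c_0\|\calG\|_{V^*}/(\varphi_0\amin)\ge 1$, which is trivially $\ge 1$, and the upgrade is immediate. Your proposal substitutes first and upgrades the combined per-factor afterwards, which then requires $c_0\|\calG\|_{V^*}M/(\amin\varphi_0^2)\ge 1$. The two inequalities you cite, $c_0\|\calG\|_{V^*}/\amin\ge\varphi_0$ and $M\ge 2\|\calG\|_{V^*}/\amin$, give only $c_0\|\calG\|_{V^*}M/(\amin\varphi_0^2)\ge 2/c_0$, which is not universally $\ge 1$. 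You actually need the $+c_0$ slack inside $M$, i.e.\ $M\ge 2c_0\|\calG\|_{V^*}/\amin$, which yields $c_0\|\calG\|_{V^*}M/(\amin\varphi_0^2)\ge 2(c_0\|\calG\|_{V^*}/(\amin\varphi_0))^2\ge 2$. You gesture at this by mentioning the $+c_0+1$ slack, but the specific inequality you wrote down is the wrong one; the paper's ordering sidesteps the issue entirely. Similarly, the $\max(S,c_0,1)$ factor must be attached to \emph{every} $\ell$ in the product, including those with $\bsalpha_{\ell,\bsw}=\bse_k$, $k\ge 1$ (for which it overestimates $c_k/\amin$ via the ``$1$'' in the $\max$); your description of aligning only the $\bse_0$ and $\bszero$ cases should be broadened accordingly. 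These are both repairable bookkeeping details rather than structural gaps.
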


\begin{proof}
Starting from \eqref{eq:h-form}, for parameters $(r,\bsalpha,\beta)$,
restricted such that the conditions in \eqref{eq:h-form} hold, we have
\begin{align*}
 |h_{q,\bseta}(t,\bsy)|
 = |h_{q,\bseta,(r,\bsalpha,\beta)}(t,\bsy)|
 &= |\rho^{(\beta)}(\xi(t,\bsy))|
 \frac{\prod_{\ell=1}^r |\partial^{\bsalpha_\ell}\varphi(\xi(t,\bsy),\bsy)|}{\varphi_0(\bsz)^{r+q}} 
\\
 &\le \|\rho^{(\beta)}\|_{L^\infty(\bbR)}\,\|\calG\|_{V^*}^r
 \frac{\prod_{\ell=1}^r \|\partial^{\bsalpha_\ell}u(\cdot,\xi(t,\bsy),\bsy)\|_V}{\varphi_0(\bsz)^{r+q}},
\end{align*}
where $\rho^{(\beta)}(x) = (-1)^\beta\,{\rm He}_\beta(x)\,\rho(x)$ with ${\rm
He}_\beta$ denoting the order $\beta$ (probabilist's) Hermite polynomial.
Every polynomial is dominated by the normal density $\rho$
and in particular, it follows from \cite{Ind61} that $\|\rho^{(\beta)}\|_{L^\infty(\R)} \leq  \sqrt{\beta!/(2\pi)}$.

Ultimately, we want an  upper bound which does not depend on $(r,\bsalpha,\beta)$. To this end,                      since $\beta \leq |\bseta| + q - 1$, we have 
\begin{equation}
\label{eq:h1}
 |h_{q,\bseta}(t,\bsy)|
\le \sqrt{\frac{(|\bseta| + q - 1)!}{2\pi}}\,\|\calG\|_{V^*}^r
 \frac{\prod_{\ell=1}^r \|\partial^{\bsalpha_\ell}u(\cdot,\xi(t,\bsy),\bsy)\|_V}{\varphi_0(\bsz)^{r+q}}.
\end{equation}

Next, we bound the derivative terms in the product.
Using Theorem \ref{thm:deriv_bnd}, we obtain
\begin{align*}
 &\|\partial^{\bsalpha_{\ell}} u(\cdot,\xi(t,\bsy),\bsy)\|_{V} \\
 &\le
 \begin{cases}
 \displaystyle
\frac{|(\bsalpha_\ell)_{s+1:2s}|!}{(\ln 2)^{|(\bsalpha_\ell)_{s+1:2s}|}}
			 \bigg(\prod_{j=1}^s b_j^{\alpha_{\ell,j+s}}\bigg)
 \frac{S(\bsy)}{a_{\min}(\bsz)}
 & \mbox{if } (\bsalpha_\ell)_{0:s} = \bszero,
 \\
 \displaystyle
 \frac{|(\bsalpha_\ell)_{s+1:2s}|!}{(\ln 2)^{|(\bsalpha_\ell)_{s+1:2s}|}}
			 \bigg(\prod_{j=1}^s b_j^{\alpha_{\ell,j+s}}\bigg)
			 \frac{c_k}{a_{\min}(\bsz)}
 &
\mbox{if } (\bsalpha_\ell)_{0:s} = \bse_k \mbox{ for some } k\in \{0:s\},
 \\
 0 & \text{otherwise},
 \end{cases}
\end{align*}
where we define $S(\bsy) \coloneqq \bar{c}+c_0|\xi(t,\bsy)|+\sum_{i=1}^s c_i|w_i|$.
These three cases can be combined into a single upper bound
\begin{align}
\label{eq:du-single}
 \|\partial^{\bsalpha_{\ell}} u(\cdot,\xi(t,\bsy),\bsy)\|_{V}
 &\le
\frac{|(\bsalpha_\ell)_{s+1:2s}|!}{(\ln 2)^{|(\bsalpha_\ell)_{s+1:2s}|}}
			 \bigg(\prod_{j=1}^s b_j^{\alpha_{\ell,j+s}}\bigg)
 \bigg(\prod_{i=1}^s c_i^{\alpha_{\ell,i}}\bigg)
 \frac{\max(S(\bsy),c_0,1)}{a_{\min}(\bsz)}.
\end{align}
Indeed, for the first case $(\bsalpha_\ell)_{0:s} = \bszero$ we have
$\prod_{i=1}^s c_{i}^{\alpha_{\ell,i}} = 1$ and the factor $S(\bsy)$ in the
numerator above $a_{\min}(\bsz)$. For the second case
$(\bsalpha_\ell)_{0:s} = \bse_k$, if $k = 0$ then $\prod_{i=1}^s
c_{i}^{\alpha_{\ell,i}} = 1$ and we have the factor $c_0$, but if
$k\ne 0$ then $\prod_{i=1}^s c_{i}^{\alpha_{\ell,i}}=c_k$ which
incorporates $c_k$ so we have the factor $1$. The bound overestimates
the $0$ value for all other cases of $\bsalpha_\ell$.

Substituting \eqref{eq:du-single} into \eqref{eq:h1} gives
\begin{align} \label{eq:hT1T2bd}
 |h_{q,\bseta}(t,\bsy)|
 \le 
 \sqrt{\frac{(|\bseta| + q - 1)!}{2\pi}}
 \,T_1\, T_2(\bsy),
\end{align}
where we define
\begin{align*}
 T_1 &\coloneqq
 \prod_{\ell=1}^r \bigg[\frac{|(\bsalpha_\ell)_{s+1:2s}|!}{(\ln 2)^{|(\bsalpha_\ell)_{s+1:2s}|}}
	\bigg(\prod_{j=1}^s b_j^{\alpha_{\ell,j+s}}\bigg)
	\bigg(\prod_{i=1}^s c_i^{\alpha_{\ell,i}}\bigg)\bigg], 
\\
 T_2(\bsy)
 &\coloneqq  \frac{1}{\varphi_0(\bsz)^q}
 \bigg(\frac{\max(S(\bsy),c_0,1)\,\|\calG\|_{V^*}}{\varphi_0(\bsz)\,a_{\min}(\bsz)}\bigg)^r.
\end{align*}
We can write
\begin{align} \label{eq:T1bnd}
 T_1
 &= \bigg(\prod_{\ell=1}^r |(\bsalpha_\ell)_{s+1:2s}|!\bigg)
    \bigg(\frac{1}{(\ln 2)^{\sum_{\ell=1}^r |(\bsalpha_\ell)_{s+1:2s}|}}\bigg)
    \bigg(\prod_{j=1}^s b_j^{\sum_{\ell=1}^r\alpha_{\ell,j+s}} \bigg)
	\bigg(\prod_{i=1}^s c_i^{\sum_{\ell=1}^r\alpha_{\ell,i}} \bigg) \notag \\
 &\le \frac{|\bseta|!}{(\ln 2)^{|\bseta|}}
    \bigg(\prod_{j=1}^s b_j^{\eta_{j+s}} \bigg)
    \bigg(\prod_{i=1}^s c_i^{\eta_{i}} \bigg),
\end{align}
where we used $\sum_{\ell=1}^r (\bsalpha_\ell)_{1:2s} = \bseta$ and thus
$\prod_{\ell=1}^r |(\bsalpha_\ell)_{s+1:2s}|! \le |\bseta|!$.

It remains to bound the factor $T_2(\bsy)$. Combining the
notations \eqref{eq:bj} and \eqref{eq:phi-qoi}--\eqref{eq:phi-sum} with
\eqref{eq:LaxMil-G}--\eqref{eq:QoI2}, we can write the a priori bounds for
$\phibar$ and $\varphi_i$ as
\begin{align}\label{eq:bnd_qoi}
	|\phibar(\bsz)|
	\le  \frac{\cbar\,\|\calG\|_{V^*}}{\amin(\bsz)}\,, \quad
	|\varphi_i(\bsz)|
	\le  \frac{c_i\,\|\calG\|_{V^*}}{\amin(\bsz)}\,,
	\quad \text{and thus} \quad
   \frac{c_0\|\,\calG\|_{V^*}}{\varphi_0(\bsz)\,a_{\min}(\bsz)} \ge 1.
\end{align}
The last inequality in \eqref{eq:bnd_qoi} allows us to use 
$r\le 2|\bseta| + q - 1$
to bound $T_2(\bsy)$ independently of~$r$ by
\begin{equation}
\label{eq:T2bnd1}
T_2(\bsy) \leq  \frac{1}{\varphi_0(\bsz)^{2|\bseta| + 2q - 1}}
 \bigg(\frac{\max(S(\bsy),c_0,1)\,\|\calG\|_{V^*}}{a_{\min}(\bsz)}\bigg)^{2|\bseta| + q - 1}.
\end{equation}

It follows from \eqref{eq:xi-explicit}
that
\begin{align*}
 |\xi(t,\bsy)|
 \le \frac{|t|+|\bar\varphi(\bsz)|+\sum_{i=1}^s |\varphi_i(\bsz)|\,|w_i|}{\varphi_0(\bsz)}
 \le \frac{|t| + (\bar c +\sum_{i=1}^s c_i \,|w_i|) \frac{\|\calG\|_{V^*}}{a_{\min}(\bsz)} }{\varphi_0(\bsz)},
\end{align*}
which leads to
\begin{align*}
 S(\bsy)
 &\le \bar{c} + c_0 \frac{|t| + (\bar{c} + \sum_{i=1}^s c_i \,|w_i|) \frac{\|\calG\|_{V^*}}{a_{\min}(\bsz)} }{\varphi_0(\bsz)}
 + \sum_{i=1}^s c_i \,|w_i| \\
 &= \frac{c_0\,|t|}{\varphi_0(\bsz)}
 + \bigg(\frac{c_0}{\varphi_0(\bsz)}\frac{\|\calG\|_{V^*}}{a_{\min}(\bsz)} + 1\bigg)
 \bigg(\bar{c} + \sum_{i=1}^s c_i \,|w_i|\bigg).
\end{align*}
Adding $c_0+1$ after $\cbar$ and using the last inequality in
\eqref{eq:bnd_qoi} again yields
\begin{align} \label{eq:maxMbd}
 \max(S(\bsy), c_0, 1)
 &\le \frac{c_0}{\varphi_0(\bsz)} \bigg(|t| + \frac{2\|\calG\|_{V^*}}{a_{\min}(\bsz)}
 \bigg(\bar{c} + c_0 + 1 + \sum_{i=1}^s c_i \,|w_i|\bigg)
 \bigg).
\end{align}
Combining \eqref{eq:hT1T2bd}, \eqref{eq:T1bnd}, \eqref{eq:T2bnd1}, and
\eqref{eq:maxMbd} gives the final bound \eqref{eq:h-bound}.
\end{proof}

The following Lemma shows that the bound \eqref{eq:h-int} holds with an explicit constant
$B_{q,\bseta}$. In the proof, we will take a more liberal upper bound on
$|h_{q,\bseta}(t,\bsy)|$ so that it can be integrated easily.

\begin{lemma} \label{lem:B}
Let $\varphi$ be the quantity of interest \eqref{eq:phi-qoi} for the PDE
problem \eqref{eq:varpde}. Given $q\in\bbN_0$ and $\bseta\in\bbN_0^{2s}$, 
define
\begin{align} \label{eq:B-def}
 B_{q,\bseta}
 &\coloneqq A_{q,\bseta} \frac{(|\bseta| + q - 1)!\, (|\bseta|!)^2}{(\ln 2)^{2|\bseta|}}
 \bigg(\prod_{j=1}^s b_j^{2\eta_{j+s}} \bigg)
 \bigg(\prod_{i=1}^s c_i^{2\eta_{i}} \bigg)
\end{align}
where
\begin{align}
\label{eq:Aq,eta}
 A_{q,\bseta}
 &\coloneqq  
\frac{1}{2\pi}
 \bigg(\frac{\ell_{0,\inf} + \|u_0(\cdot,\bszero)\|_{W^{1,\infty}(D)}}{\varphi_0(\bszero)\,\ell_{0,\inf}}\bigg)^{4|\bseta|+2q}
\nonumber\\
 &\qquad\cdot
 \bigg(c_0\|\calG\|_{V^*}\big(\max(|t_0|,|t_1|) +2\|\calG\|_{V^*}(\bar{c} + c_0+ 1)\big)\bigg)^{2|\bseta|}
\nonumber\\
 &\qquad\cdot
 \bigg(\prod_{\satop{i=1}{\eta_i\ne 0}}^s I_\psi \big( (2|\bseta| + q - 1)c_i \big) \bigg)
 \bigg(\prod_{\satop{i=1}{\eta_i= 0}}^s I_\rho \big( (2|\bseta| + q - 1) c_i \big) \bigg) 
 \nonumber\\&\qquad \cdot
 \bigg(\prod_{\satop{j=1}{\eta_{s+j} \neq 0}}^s \!\!\!
 I_\psi \big( 2(6|\bseta|+4q - 3) \widehat{b}_j \big) \bigg)
 \bigg(\prod_{\satop{j=1}{\eta_{s+j}= 0}}^s \!\!\!
 I_\rho \big( 2(6|\bseta|+4q - 3)\widehat{b}_j \big) \bigg),
\end{align}
with $\widehat{b}_j \coloneqq \|a_j\|_{W^{1,\infty}(D)} \ge \|a_j\|_{L^\infty(D)} = b_j$,
\begin{align} 
\label{eq:I1I2}
 I_\psi(\Theta) &\coloneqq \int_{-\infty}^\infty e^{2 \Theta |y|}\,\psi(y) \,\rd y
 \quad\mbox{and}\quad
 I_\rho(\Theta) \coloneqq \int_{-\infty}^\infty e^{2 \Theta |y|}\,\rho(y) \,\rd y.
\end{align}
Suppose that the weight function $\psi$ is such that the integrals
$I_\psi(\cdot)$ in \eqref{eq:Aq,eta} are finite, then
the function
$h_{q,\bseta}$ of the form \eqref{eq:h-form} satisfies \eqref{eq:h-int}
with $B_{q, \bseta}$ defined above.
\end{lemma}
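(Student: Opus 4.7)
The plan is to square the pointwise bound from Lemma~\ref{lem:h_bound}, multiply by the weight product $\bspsi_{\bseta}(\bsy_\bseta)\bsrho_{-\bseta}(\bsy_{-\bseta})$, and show that the resulting $2s$-dimensional integral factorises into an explicit $\bsy$-independent constant (equal to $B_{q,\bseta}$) multiplied by a product of one-dimensional integrals, each of the form $\int e^{2\Theta|y|}\psi(y)\,\rd y = I_\psi(\Theta)$ or $\int e^{2\Theta|y|}\rho(y)\,\rd y = I_\rho(\Theta)$. To enable this factorisation I will replace every $\bsz$-dependent denominator in the squared bound (namely $\varphi_0(\bsz)$ and $a_{\min}(\bsz)$), as well as the $\bsz$-dependent part of the bracketed factor, by explicit exponentials in $\sum_j|z_j|\widehat{b}_j$, and split the $\bsw$-dependence of the bracket cleanly as $\prod_{i=1}^s e^{c_i|w_i|}$.

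Concretely, I would carry out four substitutions. First, apply Corollary~\ref{cor:lower} to obtain $1/\varphi_0(\bsz)\le K_0(\bsz)/\varphi_0(\bszero)$, and further bound $K_0(\bsz) \le \frac{\ell_{0,\inf}+\|u_0(\cdot,\bszero)\|_{W^{1,\infty}(D)}}{\ell_{0,\inf}}\, e^{2\sum_j|z_j|\widehat{b}_j}$, using $a_{\max}(\bsz)\le e^{\sum_j|z_j|b_j}$, the inequality $1+x\le e^x$, and $b_j\le \widehat{b}_j$. Second, use the lognormal form of $a$ to write $1/a_{\min}(\bsz)\le e^{\sum_j|z_j|\widehat{b}_j}$. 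Third, decompose $|t|+\frac{2\|\calG\|_{V^*}}{a_{\min}(\bsz)}(\bar c+c_0+1+\sum_i c_i|w_i|) = M_1(t,\bsz) + \frac{2\|\calG\|_{V^*}}{a_{\min}(\bsz)}\sum_i c_i|w_i|$, with $M_1(t,\bsz)\coloneqq |t|+\frac{2\|\calG\|_{V^*}(\bar c+c_0+1)}{a_{\min}(\bsz)}$, and apply $A+B\le A e^{B/A}$ together with the key estimate $\frac{2\|\calG\|_{V^*}/a_{\min}(\bsz)}{M_1(t,\bsz)}\le \frac{1}{\bar c+c_0+1}\le 1$ (which uses only $|t|\ge 0$ and $\bar c+c_0+1\ge 1$) to obtain the clean split $M(t,\bsy)\le M_1(t,\bsz)\prod_{i=1}^s e^{c_i|w_i|}$. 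Fourth, bound $M_1(t,\bsz)\le \big(\max(|t_0|,|t_1|)+2\|\calG\|_{V^*}(\bar c+c_0+1)\big)\,e^{\sum_j|z_j|\widehat{b}_j}$ by using $|t|\le\max(|t_0|,|t_1|)$ and $\max(1,1/a_{\min}(\bsz))\le e^{\sum_j|z_j|\widehat{b}_j}$.

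Raising these substitutions to the powers dictated by the squared Lemma~\ref{lem:h_bound} bound and multiplying through by $\bspsi_\bseta\bsrho_{-\bseta}$ produces an expression of the shape $(\text{explicit }\bsy\text{-independent constant})\times \prod_i e^{R^{(w)}_i\, c_i|w_i|}\prod_j e^{R^{(z)}\widehat{b}_j|z_j|}\cdot \bspsi_\bseta\bsrho_{-\bseta}$, which integrates factor-wise. For each variable the appropriate density $\psi$ (if $\eta_i\ne 0$) or $\rho$ (if $\eta_i=0$) appears, yielding $I_\psi$ or $I_\rho$ with arguments $\Theta=(2|\bseta|+q-1)c_i$ for the $w_i$-variables and $\Theta=2(6|\bseta|+4q-3)\widehat{b}_j$ for the $z_j$-variables, as listed in \eqref{eq:Aq,eta}. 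The finiteness hypothesis on $I_\psi(\cdot)$ stated in the lemma then gives $B_{q,\bseta}<\infty$ uniformly in $t\in[t_0,t_1]$. The main obstacle is the bookkeeping required to check that, after raising each of the four substitutions to the correct power from Lemma~\ref{lem:h_bound}, the summed exponential rates $R^{(w)}_i$ and $R^{(z)}$ match precisely the arguments of $I_\psi$ and $I_\rho$ in \eqref{eq:Aq,eta}, and that the accumulated $\bsy$-independent prefactors collapse to the constant $A_{q,\bseta}$.
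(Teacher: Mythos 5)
Your proposal follows essentially the same strategy as the paper's proof: square the pointwise bound from Lemma~\ref{lem:h_bound}, bound $1/\varphi_0(\bsz)$ via Corollary~\ref{cor:lower} and the explicit form \eqref{eq:K0} of $K_0(\bsz)$, replace $1/a_{\min}(\bsz)$ and $a_{\max}(\bsz)$ by $e^{|\bsb \dottimes \bsz|}$ using the lognormal form, absorb the $\bsw$-dependence into $\prod_i e^{c_i|w_i|}$ via $1+x\le e^x$, and finally integrate variable-by-variable to produce the $I_\psi$ and $I_\rho$ factors. Your fourth substitution and the $A+B\le Ae^{B/A}$ device are just a reorganisation of the same $1+x\le e^x$ step the paper applies inside \eqref{eq:maxMbd}, and your three exponential bounds recombine to the single estimate $T_2(\bsy)\le e^{2(6|\bseta|+4q-3)|\widehat\bsb\dottimes\bsz| + (2|\bseta|+q-1)|\bsc\dottimes\bsw|}\,(\text{const})$ that the paper records as \eqref{eq:T2bnd2}; you correctly identify the remaining work as bookkeeping. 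No genuine gap.
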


\begin{proof}

Here we continue
from the proof of Lemma~\ref{lem:h_bound} to obtain a more liberal upper
bound on~$T_2(\bsy)$.
To simplify the notation, using $\dottimes$ to denote componentwise multiplication of two vectors and $|\bsv| \coloneqq \sum_j |v_j|$ we abbreviate
\[
  |\widehat\bsb\dottimes \bsz| = \sum_{j = 1}^s |\widehat{b}_j z_j| \,=\, \sum_{j = 1}^s \widehat{b}_j|z_j|, \quad
  |\bsb\dottimes\bsz| = \sum_{j = 1}^s b_j|z_j|
  \quad\mbox{and}\quad
  |\bsc\dottimes\bsw| = \sum_{i=1}^s c_i |w_i|.
\]

From the definition \eqref{eq:axz} it follows that
\[
 \amin(\bsz) \ge e^{-|\bsb\dottimes\bsz|}
 \quad\mbox{and}\quad
 \amax(\bsz) \le e^{|\bsb\dottimes \bsz|}.
\]
Using also $1\le e^x$ and $1 + x \le e^x$ for $x\ge 0$, we have from
\eqref{eq:maxMbd} that
\begin{align*}
  \max(S(\bsy), c_0, 1)
 &\le \frac{c_0}{\varphi_0(\bsz)} \Big( |t| + 2\|\calG\|_{V^*} (\bar{c}+c_0+1)\Big)
 e^{|\bsb\dottimes\bsz| + |\bsc\dottimes\bsw|}, 
\end{align*}
and thus, from \eqref{eq:T2bnd1} we have
\begin{align*}
 T_2(\bsy) \le 
 \frac{e^{(2|\bseta| + q - 1)(2|\bsb \dottimes \bsz| + |\bsc\dottimes\bsw|)}}
 {\varphi_0(\bsz)^{4|\bseta|+3q - 2}}
 \big[c_0\|\calG\|_{V^*}
 \big( |t| + 2\|\calG\|_{V^*} (\bar{c}+c_0+1)\big) \big]^{2|\bseta| + q - 1}
 .
\end{align*}
Next, the lower bound on $\varphi_0(\bsz)$ from Corollary~\ref{cor:lower}
together with \eqref{eq:K0} gives
\begin{align*}
 \frac{1}{\varphi_0(\bsz)}
 &\le \frac{e^{|\bsb\dottimes\bsz|}}{\varphi_0(\bszero)}
 \bigg( e^{|\bsb\dottimes\bsz|}
 + \frac{\|u_0(\cdot,\bszero)\|_{W^{1,\infty}(D)}}{\ell_{0,\inf}}\,|\widehat\bsb\dottimes\bsz|
 \bigg) \\
 &\le
  e^{2|\widehat\bsb\dottimes\bsz|}
  \,\bigg(\frac{\ell_{0,\inf} + \|u_0(\cdot,\bszero)\|_{W^{1,\infty}(D)}}{\varphi_0(\bszero)\,\ell_{0,\inf}} \bigg) ,
\end{align*}
and hence,
\begin{align}
\label{eq:T2bnd2}
 T_2(\bsy) &\le e^{2(6|\bseta|+4q - 3) |\widehat\bsb\dottimes \bsz| + (2|\bseta| + q - 1) \,|\bsc\dottimes \bsw|}
 \,\bigg(\frac{\ell_{0,\inf}  + \|u_0(\cdot,\bszero)\|_{W^{1,\infty}(D)}}{\varphi_0(\bszero)\,\ell_{0,\inf}}
 \bigg)^{4|\bseta|+3q - 1}
  \nonumber\\
 &\qquad \cdot\Big(c_0\|\calG\|_{V^*}
 \big(|t| + 2\|\calG\|_{V^*}(\bar{c}+c_0+1)\big)\Big)^{2|\bseta| + q - 1} ,
\end{align}
which replaces the second line in the upper bound \eqref{eq:h-bound}.

To compute the integral \eqref{eq:h-int}, we square the bound on $|h_{q,\bseta}(t,\bsy)|$ then integrate with respect to $\bsy$ against
the product $\bspsi_\bseta \cdot \bsrho_{-\bseta}$.
First, note that the terms on the first line of the bound \eqref{eq:h-bound}
are independent of $\bsy$, giving the factorial and product factors 
in $B_{q, \bseta}$ from \eqref{eq:B-def}, with the leading $1/\sqrt{2\pi}$
factor incorporated into $A_{q,\bseta}$ as defined in \eqref{eq:Aq,eta}.
Collecting the $\bsy$-independent factors from \eqref{eq:T2bnd2} gives the next two factors in $A_{q,\bseta}$.
Finally, the integral factor comes from the exponential in \eqref{eq:T2bnd2}
and is given by
\[
 \int_{\bbR^{2s}} e^{2(6|\bseta|+4q - 3) |\widehat\bsb\dottimes \bsz| + (2|\bseta| + q - 1) \,|\bsc\dottimes \bsw|}
\,\bspsi_{\bseta}(\bsy_\bseta)\,\bsrho_{-\bseta}(\bsy_{-\bseta})\,\rd\bsy,
\]
which expands to become products of univariate integrals of the form
\eqref{eq:I1I2}, leading to the last four product factors in $A_{q, \bseta}$
and yielding the formula \eqref{eq:B-def} for $B_{q,\bseta}$.
\end{proof}

We have $I_\rho(\Theta) = 2 e^{2\Theta^2} \Phi(2\Theta)$, where $\Phi$ is
the cumulative distribution function of $\rho$, and
\begin{align*}
 I_\psi(\Theta)
 = \begin{cases}
 \displaystyle\frac{1}{\mu-\Theta} & \mbox{for exponential weight function \eqref{eq:psi-exp} with $\mu>\Theta$}, \\
 \displaystyle {2}\sqrt{\frac{\pi}{\mu}}\exp\bigg(\frac{\Theta^2}{\mu}\bigg)
 {\Phi\bigg(\sqrt{\frac{2}{\mu}}\Theta\bigg)}
 & \mbox{for Gaussian weight function \eqref{eq:psi-Gauss} with $\mu<1/2$}.
 \end{cases}
\end{align*}

In \eqref{eq:Aq,eta} we have factors such as $I_\psi\big(2(6|\bseta|+4q - 3)\widehat{b}_j\big)$ that must be finite. If we use the exponential weight function, then we
would require $\mu > 2(6|\bseta|+4q - 3)\widehat{b}_j$ for all $j$,
which would be too large, resulting in an impractical, peaky weight function.
In particular, for the case $\bsnu = \bsone$,
as required for the QMC theory from Section~\ref{sec:qmc}, 
the maximum order of 
$\bseta \leq \bsone$ is $2s$ and the condition on $\mu$ becomes 
$\mu > 24s + {2}$ (if we suppose additionally that $\max_j \widehat{b}_j = 1$ and $q = 1$).
As such, in our computations we will take the Gaussian weight function given by \eqref{eq:psi-Gauss} for which the result holds for all $\mu< 1/2$. Recall that the requirement $\mu<1/2$ has additional importance as it is needed to ensure condition \eqref{eq:psi} holds.

The above lemmas finish verifying the stipulations of Assumption~\ref{asm:phi-h}, thereby ensuring that the bounds on the norms of $g_\mathrm{cdf}$ in \eqref{eq:norm-cdf} and $g_\mathrm{pdf}$ in \eqref{eq:norm-pdf} hold. 

\section{Error analysis}
\label{sec:error}

Having verified that the PDE quantity of interest \eqref{eq:qoi}
satisfies the necessary conditions for the smoothing by preintegration theory,
we now present bounds for the error of the QMC approximations of the
distribution and density functions.

First, we recall the bounds on the norms for the preintegrated expressions for the cdf and pdf. We restrict ourselves to $\calH_{2s}^\bsone$,
the Sobolev space of first-order dominating mixed smoothness, since that is what is necessary to apply the CBC error bound \eqref{eq:qmc-err}. 
The following bounds will
apply to all points in the interval $t\in[t_0,t_1]$  over which we
evaluate the pdf and cdf. 
Following from equations \eqref{eq:norm-cdf} and \eqref{eq:norm-pdf}, the upper bounds on the norms for the preintegrated function $g$, in both the cdf and pdf case, can be expressed generally as
\begin{align}\label{eq:norm-general}
 \|g\|_{\calH^{\bsone}_{2s}} \leq \Bigg(\sum_{\bseta\leq \bsone}
 \frac{\Lambda_{\bseta}}{\gamma_{\bseta}}\Bigg)^{1/2},
 \quad
 \Lambda_{\bseta}=
 \begin{cases}
 1 & \mbox{if $g = g_{\rm cdf}$ and } \bseta = \bszero\\
 \big(3^{|\bseta|-1}(|\bseta|-1)!\big)^2 B_{0, \bseta} & \mbox{if $g = g_{\rm cdf}$ and } \bseta \neq \bszero\\
 \big(3^{|\bseta|}|\bseta|!\big)^2 B_{1, \bseta} & \mbox{if $g = g_{\rm pdf}$},
 \end{cases}
\end{align}
where the factor $8^{|\bseta|}$ in \eqref{eq:norm-cdf} and \eqref{eq:norm-pdf} is now replaced by $3^{|\bseta|}$, due to additional simplification that can be done in the proofs of \cite[Theorems 3.2 and 3.3]{GKS23} as a result of the linear source term we consider.

The following theorem presents the main result on the error of our QMC with preintegration approximations of the cdf and pdf, for the case of a Gaussian weight function.

\begin{theorem}\label{thm:qmc_rate} Let Assumption \ref{asm:pde} hold. Let $F$ and $f$ be the cdf and pdf of the quantity of interest $\calG(u(\cdot,\bsy))$, where $\calG\in V^*$ is a positive linear functional and $u$ is the solution to \eqref{eq:weak_form}. For $\epsilon \in(0,1/2]$, consider Gaussian weight functions $\psi$  defined by \eqref{eq:psi-Gauss} with parameter $\mu\in(0,\epsilon)$.
For $N\in\N$, where $N$ is a prime power, a single randomly shifted lattice rule can be constructed for both the approximation of the cdf $F_N(t)$ as in \eqref{eq:F_N} and pdf $f_N(t)$ as in \eqref{eq:f_N}, for all $t\in[t_0,t_1]$,
such that the corresponding errors satisfy
\begin{align}
\sqrt{\bbE_{\bsDelta}[|F(t)-F_N(t)|^2]}\, &\leq\,  C_{\bsgamma,s,\epsilon}\, N^{-1+\epsilon} \quad \text{and} \label{eq:qmc-err-F}\\
\sqrt{\bbE_{\bsDelta}[|f(t)-f_N(t)|^2]}\, &\leq\, C_{\bsgamma,s,\epsilon}\, N^{-1+\epsilon}, \label{eq:qmc-err-f}
\end{align}
where $C_{\bsgamma,s,\epsilon}<\infty$ is independent of $N$, but depends on $s$.

The weight parameters $\bsgamma=(\gamma_\bseta)_{\bseta\leq \bsone}$ that minimise the constant $C_{\bsgamma,s,\epsilon}$ are
\begin{align*}
\gamma_{\bseta}^* = \bigg[\frac{\Gamma_{|\bseta|}}{\varrho(\epsilon)^{|\bseta|}}\, 
 \bigg(\prod_{j=1}^s b_j^{2\eta_{j+s}} \bigg)
 \bigg(\prod_{i=1}^s c_i^{2\eta_{i}} \bigg)\Bigg]^{\frac{2(1-\epsilon)}{3-2\epsilon}},
\end{align*}
where $\Gamma_{|\bseta|}$ is defined below in \eqref{eq:Gamma_m}, $b_j,c_i$ are defined in \eqref{eq:bj} and $\varrho$ is given by \eqref{eq:qmc-err-const-Gauss}.
\end{theorem}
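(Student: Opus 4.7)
The plan is to combine the randomly shifted lattice rule RMSE bound of Theorem~\ref{thm:RMSE-bound-g} (Gaussian case) with the norm bounds \eqref{eq:norm-general} for $g_{\rm cdf}$ and $g_{\rm pdf}$, and then optimise the weight parameters $\bsgamma$ to minimise the resulting constant. This is a standard weighted-QMC calculation; the key preliminary step is to extract from Lemma~\ref{lem:B} the $\bseta$-dependence of $\Lambda_\bseta$ in a clean multiplicative form.

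First, since $\mu\in(0,\epsilon)\subseteq(0,1/2)$, Theorem~\ref{thm:RMSE-bound-g} applies to the Gaussian weight function $\psi$ and gives, for any $g\in\calH^\bsone_{2s}$,
\[
\sqrt{\bbE_\bsDelta[|I_{2s}(g)-Q_{2s,N}(g)|^2]} \,\le\, \bigg(\frac{1}{\phi_{\rm tot}(N)}\sum_{\bszero\ne\bseta\le\bsone}\gamma_\bseta^{1/(2(1-\epsilon))}\,\varrho(\epsilon)^{|\bseta|}\bigg)^{1-\epsilon}\|g\|_{\calH^\bsone_{2s}}.
\]
Substituting \eqref{eq:norm-general} with $\Lambda_\bseta$ expressed via $B_{q,\bseta}$ from Lemma~\ref{lem:B}, I would isolate the pattern-dependence of $\Lambda_\bseta$ by writing
\[
\Lambda_\bseta \,=\, \Gamma_{|\bseta|}\,\prod_{j=1}^s b_j^{2\eta_{j+s}}\,\prod_{i=1}^s c_i^{2\eta_i},
\]
where $\Gamma_{|\bseta|}$ absorbs every factor depending on $\bseta$ only through $|\bseta|$: the combinatorial prefactors $(3^{|\bseta|}|\bseta|!)^2$, the factorial $(|\bseta|+q-1)!$ and the $\bsy$-independent leading block of $A_{q,\bseta}$. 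The integrals $I_\psi(\cdot)$, $I_\rho(\cdot)$ in $A_{q,\bseta}$ can be uniformly bounded across the $2^{2s}$ multi-indices (using the trivial inequality $I_\rho\le I_\psi$ when $\psi$ is lighter-tailed than $\rho$, and vice versa) so that their product also contributes a $|\bseta|$- and $s$-dependent factor to $\Gamma_{|\bseta|}$. To obtain one lattice rule serving both the cdf and pdf, I would take $\Lambda_\bseta$ to be the pointwise maximum of the two expressions in \eqref{eq:norm-general}, which preserves the factored form above.

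Next, writing $\lambda = 1/(2(1-\epsilon))$, the product RMSE bound becomes $(S_1(\bsgamma))^{1-\epsilon}(S_2(\bsgamma))^{1/2}/\phi_{\rm tot}(N)^{1-\epsilon}$ with $S_1(\bsgamma)=\sum_{\bszero\ne\bseta}\gamma_\bseta^\lambda \varrho^{|\bseta|}$ and $S_2(\bsgamma)=\sum_\bseta \Lambda_\bseta/\gamma_\bseta$. Differentiating the logarithm in $\gamma_\bseta$ and setting it to zero produces the standard optimality condition $\gamma_\bseta^{\lambda+1}\varrho^{|\bseta|}\propto \Lambda_\bseta$, i.e.
\[
\gamma_\bseta^* \,\propto\, \bigg(\frac{\Lambda_\bseta}{\varrho(\epsilon)^{|\bseta|}}\bigg)^{1/(\lambda+1)} \,=\, \bigg(\frac{\Gamma_{|\bseta|}}{\varrho(\epsilon)^{|\bseta|}}\prod_{j=1}^s b_j^{2\eta_{j+s}}\prod_{i=1}^s c_i^{2\eta_i}\bigg)^{2(1-\epsilon)/(3-2\epsilon)},
\]
since $1/(\lambda+1)=2(1-\epsilon)/(3-2\epsilon)$, which is the stated formula (the overall scaling constant cancels in the combined bound). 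Plugging $\gamma_\bseta^*$ back collapses $S_1$ and $S_2$ into a common $\bseta$-sum, and using $\phi_{\rm tot}(N)\ge N/2$ for prime powers $N$ produces the $O(N^{-1+\epsilon})$ rate with an explicit constant $C_{\bsgamma,s,\epsilon}$ that is finite because $\bseta$ ranges over the finite set $\{0,1\}^{2s}$. I expect the main obstacle to be the bookkeeping in the first step, namely reading off from Lemma~\ref{lem:B} precisely which parts of $B_{q,\bseta}$ depend on $\bseta$ only through $|\bseta|$ and which carry the pattern $\prod b_j^{2\eta_{j+s}}\prod c_i^{2\eta_i}$; once this clean separation is made, and the $I_\psi,I_\rho$ integrals are uniformly bounded, the remaining algebra is the standard Lagrange-type optimisation.
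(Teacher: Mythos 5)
Your proposal is correct and takes essentially the same route as the paper: bound $\Lambda_\bseta$ from \eqref{eq:norm-general} by a common $\overline\Lambda_\bseta = \Gamma_{|\bseta|}\prod_j b_j^{2\eta_{j+s}}\prod_i c_i^{2\eta_i}$ (using $q=1$ in $B_{q,\bseta}$ to dominate both cdf and pdf cases), plug into Theorem~\ref{thm:RMSE-bound-g}, optimise $\bsgamma$ via the standard Lagrange calculation (the paper cites \cite[Lemma 6.2]{KSS12} for exactly the condition $\gamma_\bseta^{\lambda+1}\varrho^{|\bseta|}\propto\Lambda_\bseta$ you derive), and finish with $1/\phi_{\rm tot}(N)\le 2/N$. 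The one detail worth correcting in your sketch: the paper does not rely on a crude comparison $I_\rho\le I_\psi$ (and the Gaussian $\psi$ with $\mu<1/2$ is actually heavier-tailed than $\rho$, so your stated direction is reversed); instead it bounds both $I_\psi(\Theta)\le 2\sqrt{\pi/\mu}\,e^{\Theta^2/\mu}$ and $I_\rho(\Theta)\le 2e^{\Theta^2/\mu}$ so that the product over all $2s$ coordinates factors as $2^s(\sqrt{\pi/\mu})^{|\bseta|}$ times exponentials of $\sum c_i^2$ and $\sum\widehat b_j^2$, which is what populates $\Gamma_{|\bseta|}$.
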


\begin{proof}

We start by bounding $\Lambda_\bseta$ in  \eqref{eq:norm-general} so that the RMSE of the QMC approximation for both the cdf and pdf can be bounded by the same expression. This upper bound on $\Lambda_{\bseta}$ is 
\begin{align*}
 \overline{\Lambda}_{\bseta} \,\coloneqq\, 
 \Gamma_{|\bseta|} \bigg(\prod_{j=1}^s b_j^{2\eta_{j+s}} \bigg)
\bigg(\prod_{i=1}^s c_i^{2\eta_{i}} \bigg) \quad \text{for} \quad \bseta \leq \bsone,
\end{align*}
where we define 
	\begin{align}\label{eq:Gamma_m}
	\Gamma_{m} &\,\coloneqq\, \frac{2^{s-1}}{\pi} {\bigg(\frac{9\sqrt{\pi}}{(\ln2)^2\sqrt{\mu}}\bigg)^m}
	\max\left\{1,\bigg(\frac{\ell_{0,\inf} + \|u_0(\cdot,\bszero)\|_{W^{1,\infty}(D)}}{\varphi_0(\bszero)\,\ell_{0,\inf}}\bigg)\right\}^{4m+2} \notag\\
	  &\qquad\cdot\bigg(c_0\|\calG\|_{V^*}\big(\max(|t_0|,|t_1|) +2\|\calG\|_{V^*}(\bar{c} + c_0+ 1)\big)\bigg)^{2m}\notag\\
	  &\qquad\cdot (m!)^5 \,\exp\bigg(\frac{4m^2}{\mu} \sum_{i=1}^{s} c_i^2\bigg)\exp\bigg(\frac{(12m+2)^2}{\mu}\sum_{j=1}^{s} \widehat{b}_j^2\bigg)
	\end{align}
and $\widehat b_j = \|a_j\|_{W^{1,\infty}(D)}$. {Note that we arrive at $\Gamma_m$ by combining all terms depending on $|\bseta|$ explicitly while setting $q=1$ in $B_{q,\bseta}$ and $A_{q,\bseta}$ defined by \eqref{eq:B-def} and \eqref{eq:Aq,eta}, respectively}. We have also bounded the products involving $c_i$ in $A_{q, \bseta}$ from above by
	\begin{align*}
	\bigg(\prod_{\satop{i=1}{\eta_i\ne 0}}^s I_\psi \big( (2|\bseta| + q - 1)c_i \big) \bigg)
 \bigg(\prod_{\satop{i=1}{\eta_i= 0}}^s I_\rho \big( (2|\bseta| + q - 1) c_i \big) \bigg) 
 \leq 2^{s}{\bigg(\sqrt{\frac{\pi}{\mu}}\bigg)^{|\bseta|}} \exp\bigg(\frac{4|\bseta|^2}{\mu} \sum_{i=1}^{s} c_i^2\bigg)
	\end{align*}
by  using the bounds $I_\psi(\Theta) \leq 2\sqrt{\pi/\mu}\,e^{\Theta^2/\mu}$ and $I_\rho(\Theta) {\leq} 2e^{2\Theta^2} \leq 2e^{\Theta^2/\mu}$ for $0<\mu<1/2$. A similar argument is used to bound the product of integrals involving $\widehat b_j$.
	
Applying Theorem \ref{thm:RMSE-bound-g} with $\overline\Lambda_{\bseta}$
gives a generic form for the RMSE bound for the cdf,
\begin{align}\label{eq:RMSE-g}
		\sqrt{\bbE_{\bsDelta}[|F(t)-F_N(t)|^2]} \notag
		&= \sqrt{\bbE_{\bsDelta}[|I_{2s}(g_\mathrm{cdf})-Q_{2s,N}(g_\mathrm{cdf})|^2]} \notag \\
		&\leq
		\bigg(\frac{1}{\phi_{\rm{tot}}(N)}
		\sum_{\bszero\neq\bseta\leq\bsone}\gamma_\bseta^{1/(2(1-\epsilon))}\,\varrho(\epsilon)^{|\bseta|}
		 \bigg)^{1-\epsilon}
		\left(\sum_{\bseta\leq \bsone} \frac{\overline\Lambda_{\bseta}}{\gamma_{\bseta}}\right)^{1/2},
	\end{align}
and the same bound holds for the RMSE of $f_N$ for the pdf case.

From \cite[Lemma 6.2]{KSS12}, the  weight parameters that minimise the upper bound \eqref{eq:RMSE-g} are
	\begin{align*}
		\gamma_\bseta^*
		= \Bigg(\frac{\overline\Lambda_{\bseta}}{\varrho(\epsilon)^{|\bseta|}}\Bigg)^{\frac{2(1-\epsilon)}{3-2\epsilon}},
	\end{align*}
which can be substituted into \eqref{eq:RMSE-g} to give the corresponding bound on the RMSE,
\begin{align*}
			\sqrt{\bbE_{\bsDelta}[|F(t)-F_N(t)|^2]}  \leq 
			\Bigg(\sum_{\bseta\leq\bsone} 
			\left(\varrho(\epsilon)^{|\bseta|}\,\overline\Lambda_{\bseta}^{1/(2(1-\epsilon))}\right)^{\frac{2(1-\epsilon)}{3-2\epsilon}} \Bigg)^{3-2\epsilon}[\phi_{\rm{tot}}(N)]^{-1+\epsilon}.
		\end{align*} 
Since $N$ is a prime power, we have $1/\phi_{\rm{tot}}(N) \leq 2/N$, 
which can be substituted into the bound above to give the required results. 
\end{proof}

\begin{remark}
We emphasise that the convergence rate of $\calO(N^{-1+\epsilon})$ for arbitrarily small $\epsilon$ proven above is the same rate obtained when 
computing the expected value of the quantity of interest using QMC,
as in, e.g., \cite{GrKNSSS15}. 
Considering the added difficulty of approximating distributions and densities, due to the discontinuities in the integrals \eqref{eq:cdf-y0} and \eqref{eq:pdf-y0}, this is a significant result. 
We note that the implied constants for the two problems, however, are not the same, with the constant here depending on dimension.
The dependence on dimension is an artefact of the proof, resulting from the bounds on the norms of $g_\mathrm{cdf}$ and $g_\mathrm{pdf}$, which depend non-linearly on the quantity of interest $\varphi$, see \eqref{eq:g-xi}, and so require the chain rule to compute the derivatives in the norms. 
Dimension dependence in the implied constant is the price of tackling a much more difficult integrand while still obtaining with same convergence rate as the simpler problem.
\end{remark}

\section{Numerical results}\label{sec:num}
In this section we present the results of numerical experiments on approximating the cdf and pdf. We consider the spatial domain $D = (0,1)^2$ with $s=64$, i.e., the full parametric dimension is $2s+1= 129$. Solutions to the PDE are obtained using a FEniCS \cite{fenics1,fenics2, fenics3, fenics4} finite element solver with mesh width $h=2^{-8}$. The experiments are conducted using the computational cluster Katana \cite{Katana} supported by Research Technology Services at UNSW Sydney. 

In \eqref{eq:lxw} and  \eqref{eq:axz}, we set  $\ellbar \equiv \ell_0 \equiv 1$, 
	\begin{align*}
	\ell_{i}(\bsx) &= \frac{1}{1+(i\pi)^\theta}\sin(i\pi x_1)\sin((i+1)\pi x_2)
	\quad \,\,\text{for} \quad i=1,\ldots,64, \mbox{ and }\\
	a_j(\bsx) &= \frac{\alpha}{1+(j\pi)^\theta}\sin(j\pi x_1)\sin((j+1)\pi x_2)
	\quad \text{for}\quad j=1,\ldots,64,
	\end{align*}
where $\alpha>0$ is the scaling parameter controlling the level of influence of the lognormal random coefficient and $\theta>0$ is the decay parameter controlling how quickly the importance of each random parameter decreases. The linear functional $\calG$ used for the results is point evaluation of the solution at $\bsx = (1/\sqrt{2},1/\sqrt{2})$.

We construct our lattice rule with a modified version of the weights $\bsgamma$ presented in Theorem~\ref{thm:qmc_rate}. The weights in Theorem \ref{thm:qmc_rate} are of POD (product and order-dependent) form \cite{KSS12}, however, as the magnitude of the order-dependent part grows too quickly with dimension, we will only include the factorial component of the order-dependent part. We used liberal bounds in our derivations in Section \ref{sec:error} and thus the large order-dependent part is likely overstimated, motivating its modification. Recall we are using Gaussian weight functions, hence $\varrho$ will be of the form \eqref{eq:qmc-err-const-Gauss} where we take $\mu=0.05$ and $\epsilon = 2\mu=0.1$. The new weights are given by
\begin{align}\label{eq:prod_weights}
\gamma_{\bseta} = {\Gamma_{|\bseta|}}\prod_{j\in\supp(\bseta)} \gamma_j, \quad \text{where} \quad 
\gamma_j \,\coloneqq\,
\begin{dcases}
\bigg(\frac{c_j^{2}}{\varrho(\epsilon)} \bigg)^{\frac{2(1-\epsilon)}{3-2\epsilon}}
\quad \text{if} \quad j\in \{1,\ldots,s\},\\
\bigg(\frac{b_j^{2}}{\varrho(\epsilon)} \bigg)^{\frac{2(1-\epsilon)}{3-2\epsilon}} 
\quad \text{if} \quad j\in \{s+1,\ldots,2s\}\\
\end{dcases}
\end{align}
and $\Gamma_{|\bseta|}=(|\bseta|!)^5$. Note that with this change we still preserve the theoretical $\calO(N^{-1+\epsilon})$ rate of convergence shown in Theorem \ref{thm:qmc_rate} and it is only the implied constant in \eqref{eq:qmc-err-F} and \eqref{eq:qmc-err-f} that will increase accordingly. 
There is  precedence for modifying the order-dependent components of weights in \cite{KKS23}, where the  simplified product weights performed better than the theoretical weights. This is likely due to large order-dependent factors in the theoretical weights which are artefacts of the proof techniques used.

Since the CBC construction is a greedy algorithm, we choose the components of the generating vector in order of decreasing $\gamma_j$ and then permute the components back to the ordering presented in \eqref{eq:prod_weights}.

For our experiments, we  construct lattice rules for prime $N$ where
$$N\in \{503,1009,2003,4001,8009,16007,32003\},$$	
and we take $16$ random shifts to estimate the RMSE. We compare the performance of QMC and Monte Carlo (MC), both with and without preintegration. For the MC experiments, we use $16N$ samples to ensure that the results are comparable to those of the QMC experiments. We consider three choices of decays, $\theta \in \{0.1, 2, 5\}$, and compare the densities obtained for different choices of scaling parameters, $\alpha\in \{0.1,1,30\}$.

\begin{figure}[t]
    \centering
        \hspace*{-0.7cm}
        \includegraphics[scale = 0.4]{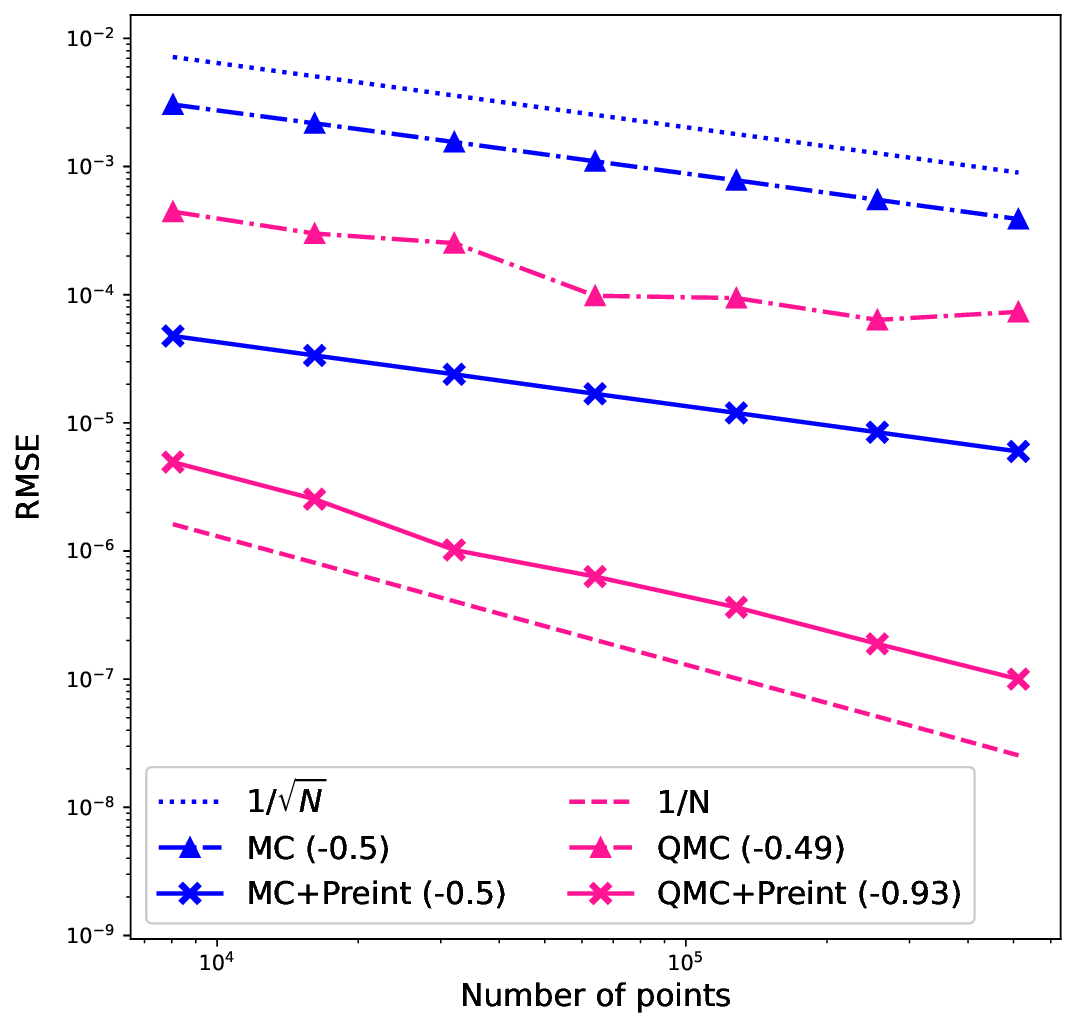}
        \includegraphics[scale = 0.4]{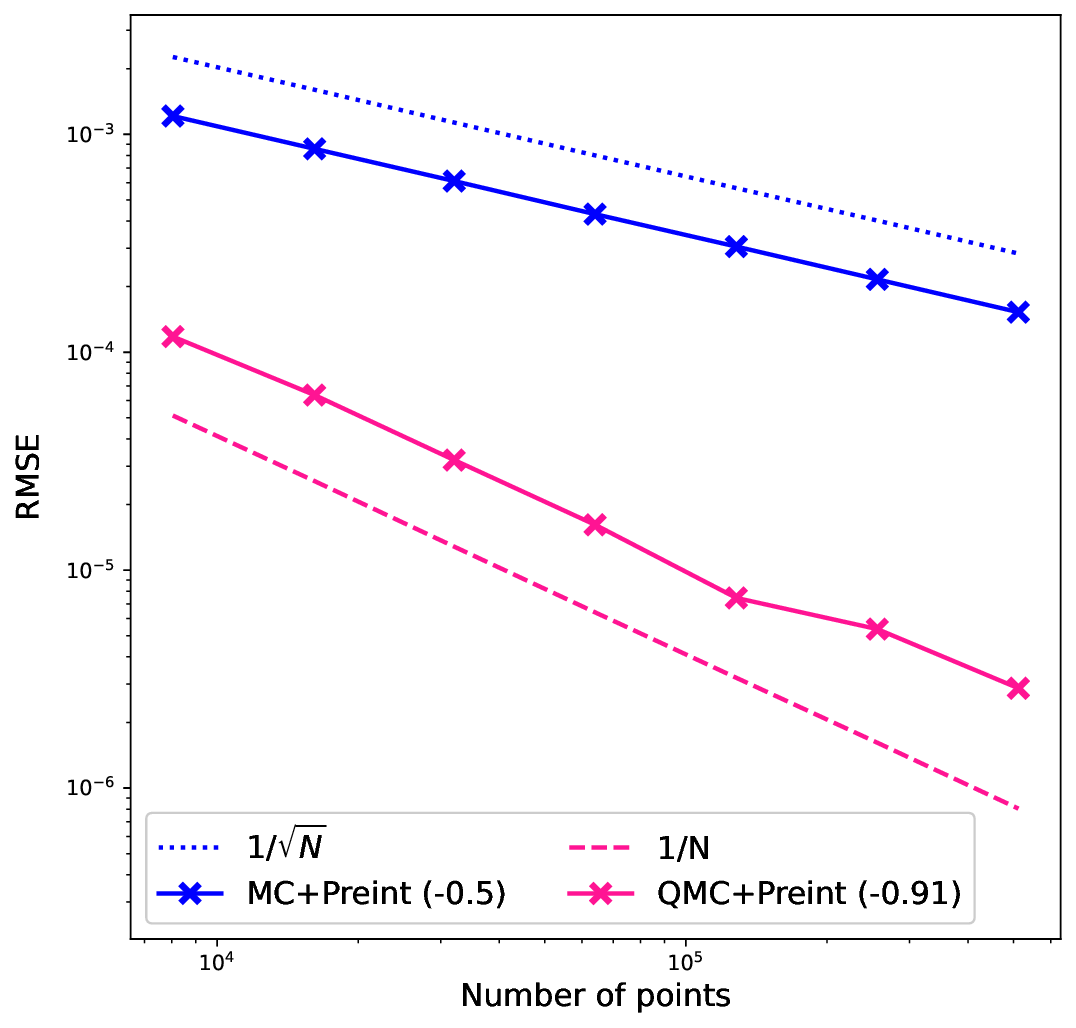}
    \caption{Convergence plots comparing MC and QMC performance (with and without preintegration) for computing the cdf (left) and pdf (right) evaluated at $t=-0.02$ with $\alpha=1$, $\theta=2$ and $16$ random shifts. The numbers in parentheses indicate the empirical rate of convergence.}
    \label{fig:a1_b1_convergence}
\end{figure}

Figure~\ref{fig:a1_b1_convergence} compares the performance of MC and QMC, both with and without preintegration, for the cdf and pdf evaluated at $t=-0.02$ with parameters $\alpha=1$ and $\theta=2$. The left plot shows the performance for the cdf problem. Notice that the results for plain MC, MC with preintegration and
plain QMC all exhibit a convergence rate of approximately $\calO(N^{-1/2})$, matching the rates predicted by the classical theory. Our QMC with preintegration computation offers the best convergence rate of close to $\calO(N^{-1})$, matching the
rate proven in Theorem~\ref{thm:qmc_rate}. As expected both QMC results are superior in absolute terms compared to their MC counterparts.

The right plot in Figure~\ref{fig:a1_b1_convergence} compares QMC 
with preintegration and MC with preintegration for the pdf with the same parameters. Since the Dirac $\delta$ cannot be evaluated explicitly, we are unable to present results for computing the pdf without preintegration. 
Again, we observe the classical rate of $\calO(N^{-1/2})$ for MC and the theoretically predicted rate of almost $\calO(N^{-1})$ for QMC.

Figure \ref{fig:cdf_pdf_a1_b1_theta2} plots the corresponding cdf and pdf for the problem with $\alpha=1$ and $\theta=2$ using $N=503$ points over the interval $t\in[-0.2,0.3]$. We also overlay the cdf and pdf histograms of 32000 randomly sampled solutions (i.e., MC) to verify whether the pdf and cdf computed using QMC and preintegration coincide with what is observed empirically. Even with as few as 503 QMC lattice points, we find a good fit. Performing a Kolmogorov--Smirnov test to assess the goodness of fit using an empirical cdf constructed from 1000 random samples, we fail to reject the null hypothesis at a 5\% significance level (i.e., we are confident the randomly sampled solutions are from the same distribution defined by our cdf computed using preintegration).

\begin{figure}[t]
    \centering
        \hspace*{-0.7cm}
        \includegraphics[scale = 0.4]{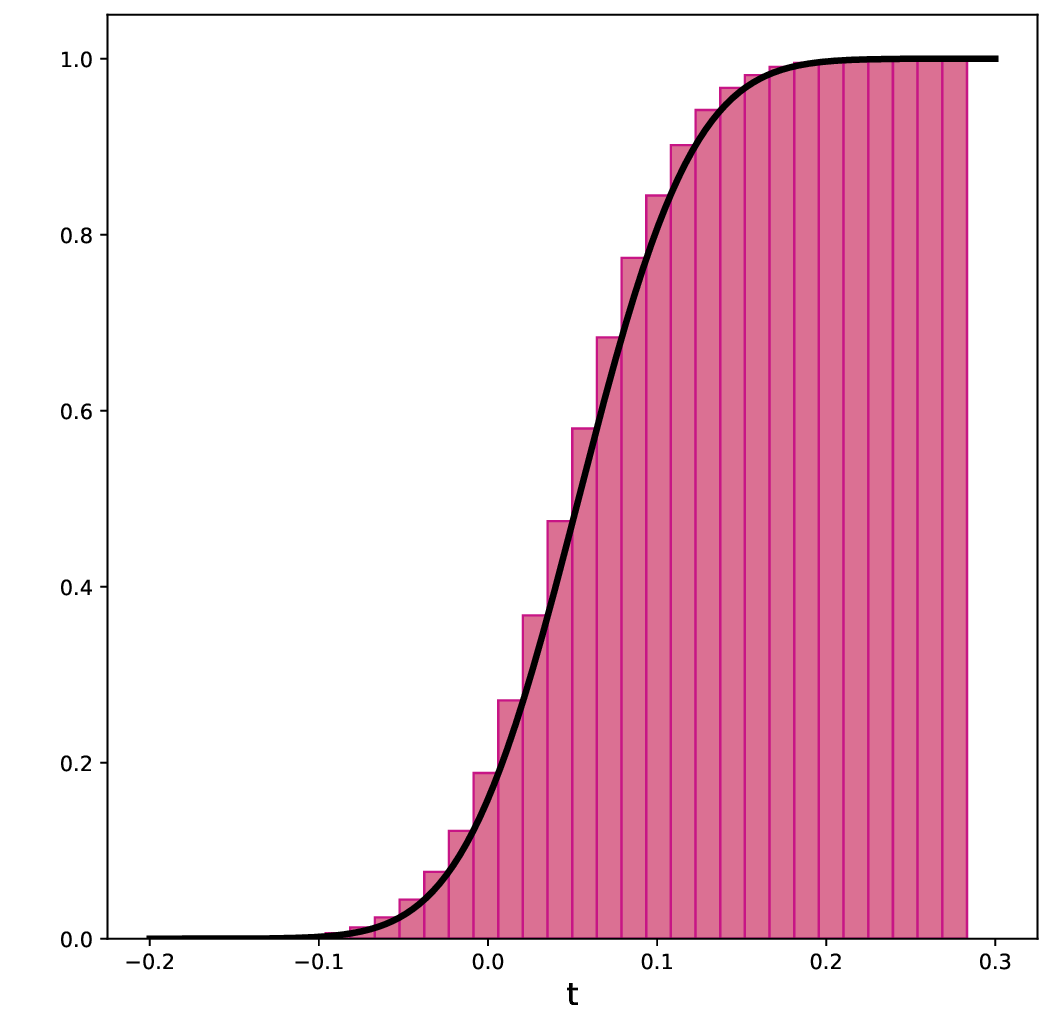}
        \includegraphics[scale = 0.4]{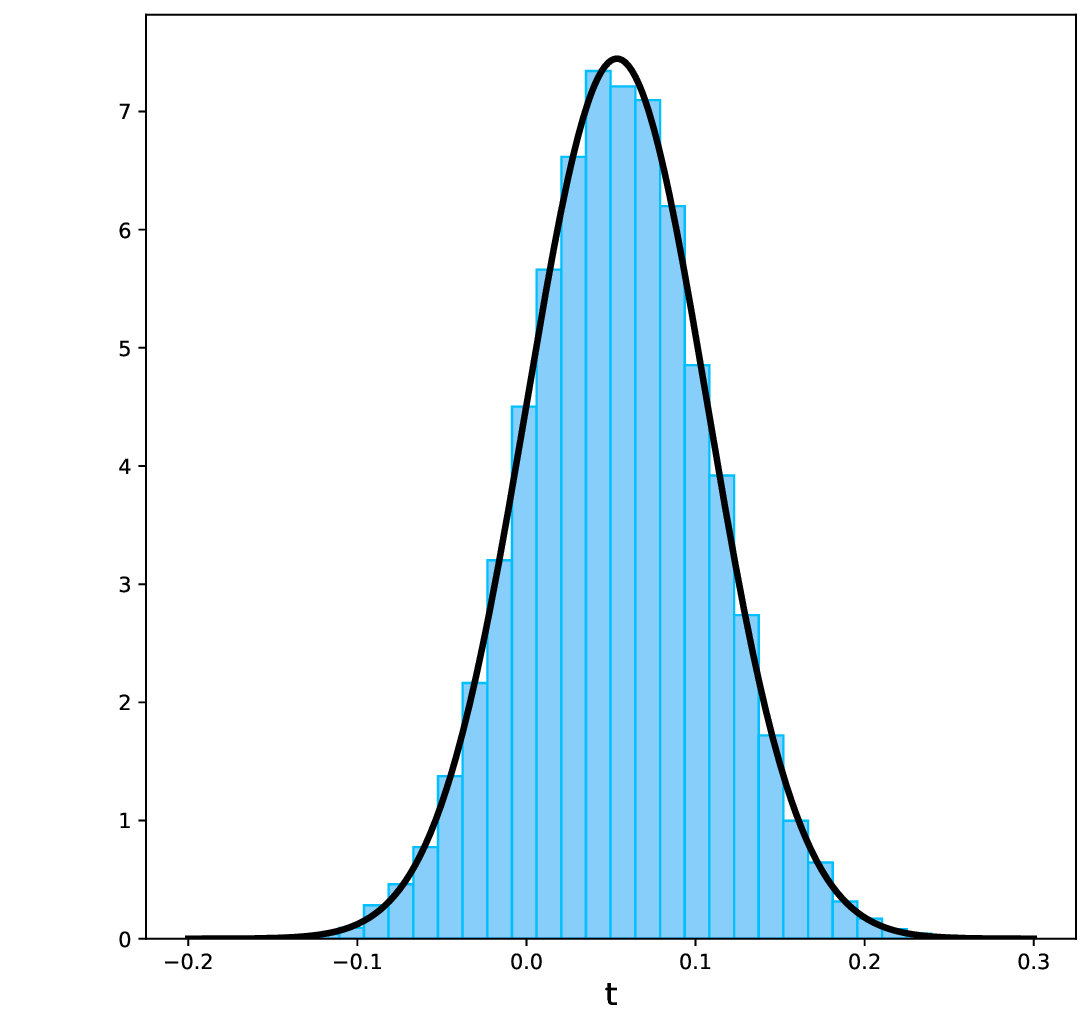}
\caption{Plots of the computed cdf (left) and pdf (right) for the problem with $\alpha=1$, $\theta=2$ and $N=503$ lattice points. Histograms of 32000 randomly sampled solutions are overlaid.}
\label{fig:cdf_pdf_a1_b1_theta2}
\end{figure}

\begin{figure}[t]
    \centering
        \hspace*{-0.7cm}
        \includegraphics[scale = 0.4]{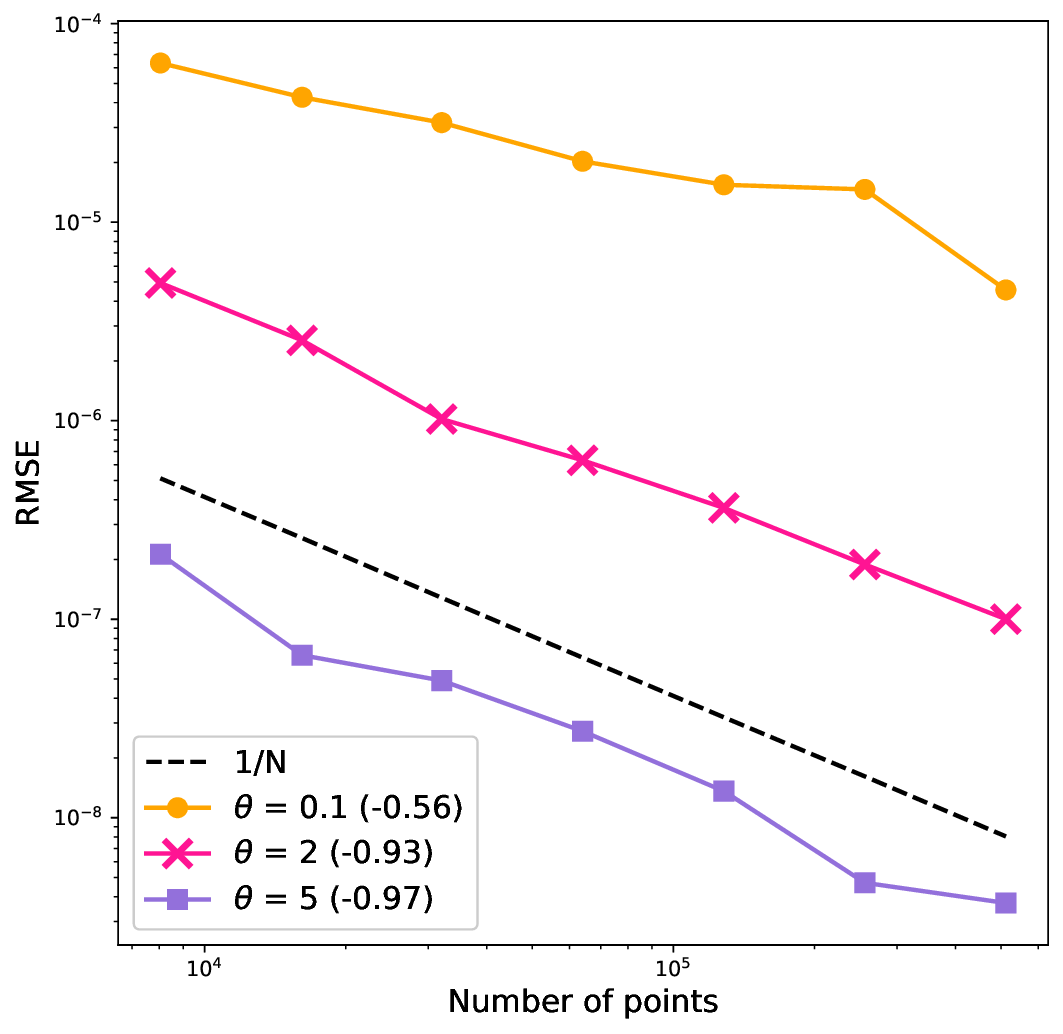}
        \includegraphics[scale = 0.4]{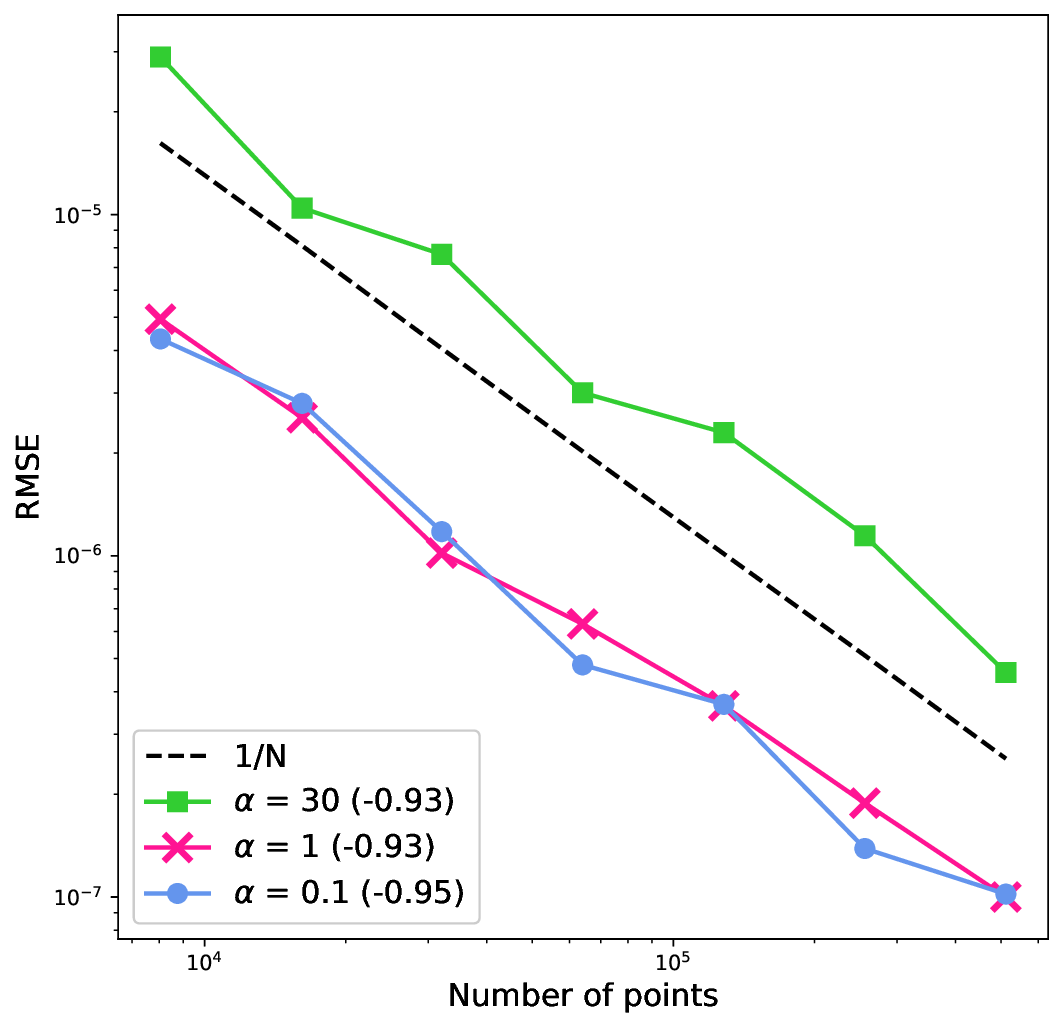}
    \caption{Convergence plots comparing QMC with preintegration performance for computing the cdf at $t=-0.02$ for fixed $\alpha=1$ and varying theta (left) and fixed $\theta=2$ and varying alpha (right) with $16$ random shifts.  The magenta lines in both plots represent the same data ($\theta = 2$ and $\alpha=1$).}    \label{fig:param_convergence}
\end{figure}

The left plot in Figure \ref{fig:param_convergence} compares the convergence for the cdf computed using QMC with preintegration for different levels of decay $\theta\in\{0.1,2,5\}$ with $\alpha=1$. For each $\theta$, we construct a new QMC rule using the weights \eqref{eq:prod_weights}. We find that the convergence rate is poor for a small decay parameter, whereas for larger decay parameters the convergence rate is close to $\calO(N^{-1})$. This is expected since the ``effective" dimension of the problem  decreases as a result of the importance of each parameter decaying faster as $\theta$ increases (see \cite{CMO97} for a precise definition of effective dimension). The magnitude of the RMSE for the three problems reflects the difficulty of the problems, with the RMSE being largest for the most difficult problem ($\theta = 0.1$) and smallest for the easiest ($\theta = 5$).

We are also interested in how the performance of QMC changes as when different random parameters become more dominant. The right plot in Figure \ref{fig:param_convergence} compares convergence when $\theta=2$ is fixed and we vary $\alpha \in\{0.1,1,30\}$. We see that the magnitude of the RMSE decreases as the impact of the lognormal random coefficient decreases. The value of $\alpha$ has no obvious impact on the rate of convergence. Note that the RMSE corresponding to $\theta = 2$ and $\alpha=1$ in Figure \ref{fig:param_convergence} are the same as the RMSE of QMC with preintegration for the CDF (left plot) in Figure \ref{fig:a1_b1_convergence}. To indicate this, these lines use the same style.

\begin{figure}[t]
\centering
        \hspace*{-0.7cm}
        \includegraphics[scale = 0.4]{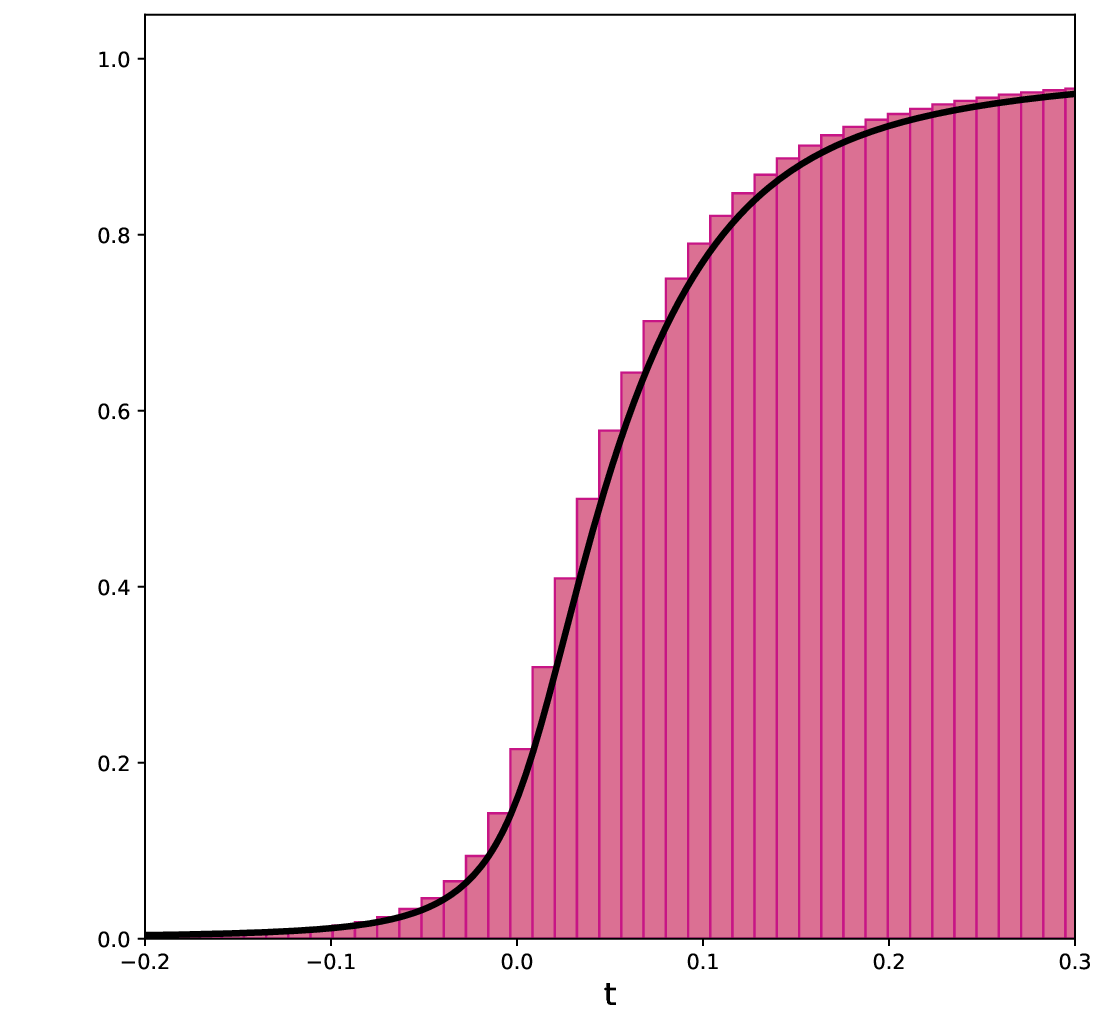}
        \includegraphics[scale = 0.4]{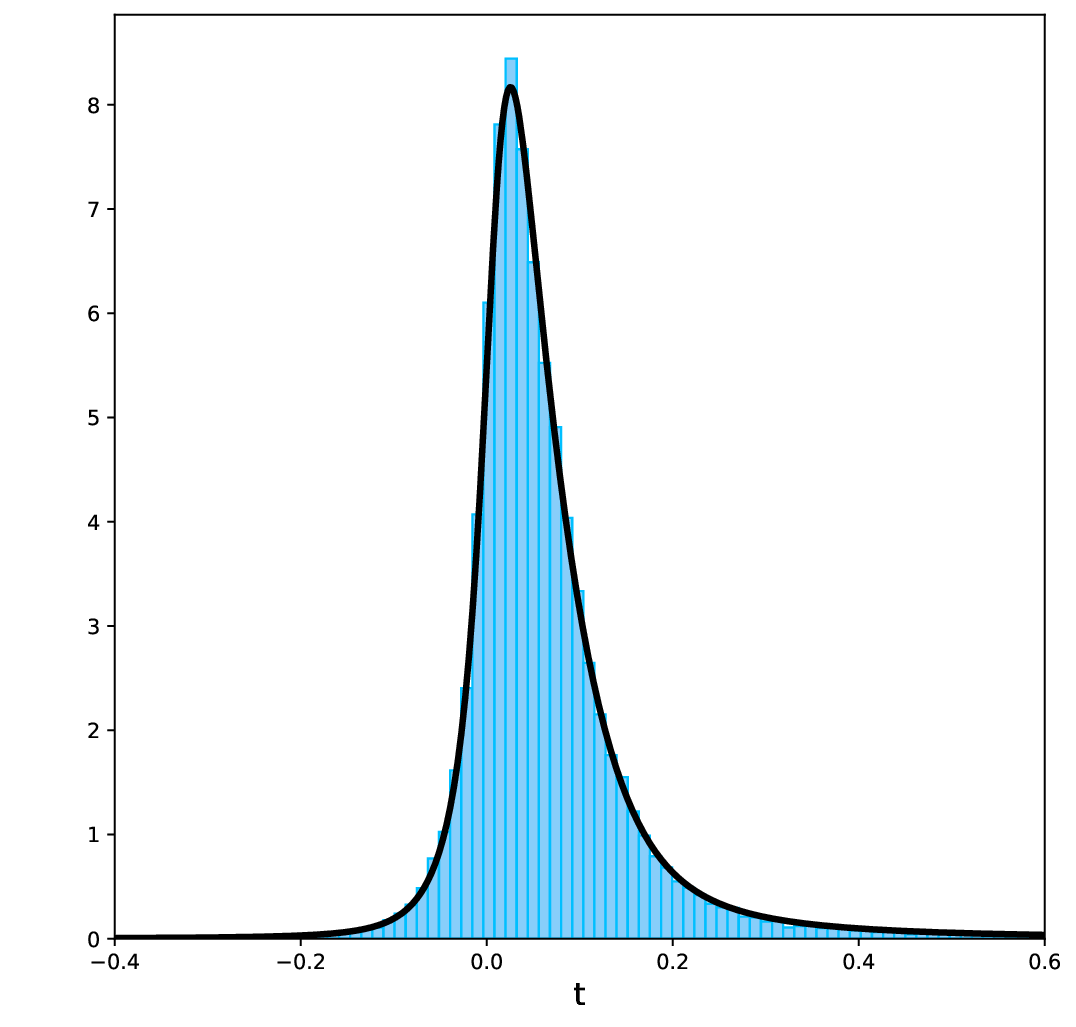}
\caption{Plots of the cdf (left) and pdf (right) for the problem with $\alpha=30$, $\theta=2$ and $N=503$ lattice points. Histograms of 32000 randomly sampled solutions are overlaid.}
\label{fig:cdf_pdf_a1_b30_theta2}
\end{figure}

Figure \ref{fig:cdf_pdf_a1_b30_theta2} shows the cdf and pdf for the case when $\alpha\gg 1$. Here we see the pdf corresponding to $\alpha =30 $ is far less symmetrical and is instead positively skewed compared to the density presented in Figure \ref{fig:cdf_pdf_a1_b1_theta2} when $\alpha=1$. This is due to the dominance of the lognormal random coefficient in comparison to the Gaussian source term resulting in heavier right tails as one would expect. In Figure \ref{fig:cdf_pdf_a1_b1_theta2}, where $\alpha = 1$, we observe a more symmetrical distribution which looks similar to the Gaussian density. 
For the case $\alpha \ll 1$ we tested $\alpha = 0.1$, in which case the cdf and pdf look very similar to those in Figure~\ref{fig:cdf_pdf_a1_b1_theta2} and thus have been been omitted.

\section{Conclusion}
\label{sec:conc}

We have presented a new method using preintegration followed by QMC to approximate the cdf and pdf of a quantity of interest from the uncertainty quantification of an elliptic PDE with a lognormally distributed coefficient and a normally distributed source term.
The key idea is to formulate the cdf and pdf as an integral of an indicator and Dirac $\delta$ function, respectively. Preintegration is used to smooth the discontinuity in the integrands and then a tailored QMC rule is applied to approximate the remaining integral.
We proved that the RMSE of our cdf and pdf approximations converge
at a rate close to $1/N$, which is the same rate as for computing the expected value of the quantity of interest.
Finally, we presented numerical results showing that the convergence in practice matches the theoretical rates and which demonstrate the superior efficiency of our method compared to plain QMC,
as well as MC.

\textbf{Acknowledgements} We would like to extend our appreciation to Ian Sloan for his valuable comments and insights. We also acknowledge the financial support from the Australian Research Council
for the project DP210100831.

\bibliographystyle{plain}

\begin{thebibliography}{10}

\bibitem{ACN13a}
 N.~Achtsis, R.~Cools, D.~Nuyens,
 \emph{Conditional sampling for barrier option pricing under the LT method},
 SIAM J.\ Financial Math.~\textbf{4} (2013), 327--352.

\bibitem{ACN13b}
 N.~Achtsis, R.~Cools, D.~Nuyens,
 \emph{Conditional sampling for barrier option pricing under the Heston
 model}, in: J.~Dick, F.Y.~Kuo, G.W.~Peters, I.H.~Sloan (eds.), {M}onte {C}arlo
 and Quasi-{M}onte {C}arlo Methods (2012), pp.~253--269, Springer-Verlag,
 Berlin/Heidelberg (2013).

\bibitem{fenics1}
M.~S.~Alnaes, J.~Blechta, J.~Hake, A.~Johansson, B.~Kehlet, A.~Logg, C.~Richardson, J.~Ring, M.~E.~Rognes and G.~N.~Wells. \emph{The FEniCS Project Version 1.5}, Archive of Numerical Software 3 (2015). doi:doi.org/10.11588/ans.2015.100.20553


\bibitem{BayB-HTemp22}
 C.~Bayer, C.~Ben Hammouda, R.~Tempone,
    \emph{Numerical smoothing with hierarchical adaptive sparse grids and
    quasi-Monte Carlo methods for efficient option pricing}, Quant. Finance, \textbf{23} (2) (2023), 209--227.
    
\bibitem{CMO97}
 R.~E.~Caflisch, W.~Morokoff and A.~Owen, \emph{Valuation of mortgage backed
securities using Brownian bridges to reduce effective dimension}, J. Comput.
Finance \textbf{1} (1997), 27--46.

\bibitem{CGSS2000} K.~A.~Cliffe, I.~G.~Graham, R.~Scheichl, L.~Stals,
\emph{Parallel computation of flow in heterogeneous media using mixed finite elements},
 J.~Comput.~Phys.~\textbf{164} (2000), 258--282.

\bibitem{CST2013} J.~Charrier, R.~Scheichl, A.~L.~Teckentrup,
\emph{Finite element error analysis of elliptic PDEs with random coefficients and its application to multilevel Monte Carlo methods},
 SIAM J.~Numer.~Anal.~\textbf{ 51} (2013), 322--352.

 \bibitem{Ciar02} P.~G.~Ciarlet,
 \emph{The Finite Element Method for Elliptic Problems}
 SIAM, Philadelphia, PA, USA, 2002.

 \bibitem{CiarRav73} P.~G.~Ciarlet, P.~A.~Raviart,
 \emph{Maximum principle and uniform convergence for the finite element method},
 Comput.~Methods Appl.~Mech.~Eng.~\textbf{2} (1973), 17--31.

\bibitem{CGST2011} K.~A.~Cliffe, M.~B.~Giles, R.~Scheichl, A.~L.~Teckentrup,
\emph{Multilevel Monte Carlo methods and applications to elliptic PDEs with random coefficients},
 Comput.~Visual.~ Sc.~\textbf{14} (2011), 3--15.

\bibitem{DKS13}
  J.~Dick, F.~Y.~Kuo, and I.~H.~Sloan,
  \emph{High dimensional integration --- the quasi-Monte Carlo way},
  Acta Numer.\ 22 (2013), 133--288.


\bibitem{DunNguSchwZech23}
D.~D\~ung, V.~K.~Nguyen, C.~Schwab and J.~Zech,
\emph{Analyticity and Sparsity in Uncertainty Quantification for PDEs with Gaussian Random Field Inputs},
Springer, Cham (2023). 

\bibitem{Evans}
L.~C.~Evans, \emph{Partial Differential Equations},
American Mathematical Society, Providence, RI, USA, 2010.

\bibitem{GanKS21}
M.~Ganesh, F.~Y.~Kuo, and I.~H.~Sloan,
\emph{Quasi-Monte Carlo finite element analysis for wave propagation in heterogeneous random media},
SIAM-ASA J.~Uncertain.~Quantif.\ \textbf{9} (2021), 106--134.

\bibitem{GilbTrud}
D.~Gilbarg and N.~S.~Trudinger,
\emph{Elliptic Partial Differential Equations of Second Order},
Springer-Verlag, Berlin-Heidelberg, Germany, 2001.

\bibitem{Gil24}
A.~D.~Gilbert,
\emph{Finite element analysis of density estimation using preintegration for elliptic PDE with random input},
in preparation, (2024).

\bibitem{GGrKSS19}
 A.~D.~Gilbert, I.~G.~Graham, F.~Y.~Kuo, R.~Scheichl and I.~H.~Sloan,
 \emph{Analysis of quasi-Monte Carlo methods for elliptic eigenvalue problems with stochastic coefficients},
 Numer.~Math., \textbf{142} (2019), 863--915.

\bibitem{GKS22a}
 A.~D.~Gilbert, F.~Y.~Kuo and I.~H.~Sloan,
\emph{Equivalence between Sobolev spaces of first order dominating mixed smoothness and unanchored
 ANOVA spaces on $\R^d$},
 Math.~Comp., \textbf{91} (2022), 1837--1869.


\bibitem{GKS22b}
A.~D.~Gilbert, F.~Y.~Kuo and I.~H.~Sloan,
 \emph{Preintegration is not smoothing when monotonicity fails}, 
in: Z.~Botev et al. (eds.) Advances in Modeling and Simulation, 
Springer, Cham (2022), 169--191.


\bibitem{GKS23}
 A.~D.~Gilbert, F.~Y.~Kuo and I.~H.~Sloan,
\emph{Analysis of preintegration followed by quasi-Monte Carlo integration for
distribution functions and densities}, SIAM J.~Numer.~Anal.~\textbf{61} (2023), 135--166.

\bibitem{Glasserman} P.~Glasserman,
\emph{Monte Carlo Methods in Financial Engineering}, 
Springer, Berlin (2003).

\bibitem{GlaSta01}
 P.~Glasserman, J.~Staum,
 \emph{Conditioning on one-step survival for
 barrier option simulations}, Oper.\ Res.~\textbf{49} (2001), 923--937.

\bibitem{GrKNSSS15}
I.~G.~Graham, F.~Y.~Kuo, J.~A.~Nichols, R.~Scheichl, C.~Schwab and I.~H.~Sloan,
\emph{Quasi-Monte Carlo finite element methods for elliptic PDEs with lognormal random coefficients},
Numer.~Math.~\textbf{131} (2015), 329--368.

\bibitem{GrKNSS11}
 I.~G.~Graham, F.~Y.~Kuo, D.~Nuyens, R.~Scheichl and I.~H.~Sloan,
 \emph{Quasi-Monte Carlo methods for elliptic PDEs with random coefficients and applications},
 J.~Comput.~Phys.\ \textbf{230} (2011), 3668--3694.

\bibitem{GKS10} M.~Griebel, F.~Y.~Kuo, and I.~H.~Sloan,
\emph{The smoothing effect of the ANOVA decomposition},
    J.~Complexity \textbf{26} (2010), 523--551.

\bibitem{GKS13} M.~Griebel, F.~Y.~Kuo, and I.~H.~Sloan,
\emph{The smoothing effect of integration in $\R^d$ and the ANOVA decomposition},
    Math.\ Comp.\ \textbf{82} (2013), 383--400.

\bibitem{GKS17note} M.~Griebel, F.~Y.~Kuo, and I.~H.~Sloan, \emph{Note on
    ``The smoothing effect of integration in~$\R^d$ and the ANOVA
    decomposition''}, Math.\ Comp.\ \textbf{86} (2017), 1847--1854.

\bibitem{GKS17} M.~Griebel, F.~Y.~Kuo, and I.~H.~Sloan,
\emph{The ANOVA decomposition of a non-smooth function of infinitely many variables
    can have every term smooth}, Math.\ Comp.\ \textbf{86} (2017), 1855--
    1876.

\bibitem{GKLS18}
 A.~Griewank, F.~Y.~Kuo, H.~Le\"ovey, and I.~H.~Sloan,
 \emph{High dimensional integration of kinks and jumps --- smoothing by preintegration},
 J.\ Comput.~App.~Math.\ \textbf{344} (2018), 259--274.

\bibitem{Gris11}
P.~Grisvard,
\emph{Elliptic Problems in Nonsmooth Domains},
SIAM, Philedelphia, USA (2011).

\bibitem{HerSchw19}
L.~Hermann and C.~Schwab,
\emph{QMC integration for lognormal-parametric, elliptic PDEs: local supports and product weights},
Numer.~Math.\ \textbf{141} (2019), 63--102.

\bibitem{Hol11}
  M.~Holtz,
  \emph{Sparse Grid Quadrature in High Dimensions with Applications in Finance and Insurance}
  (PhD thesis), Springer-Verlag, Berlin, 2011.

\bibitem{Ind61}
J.~Indritz,
\emph{An inequality for Hermite polynomials},
Proc.~Amer.~Math.~Soc.~\textbf{12} (1961), 981--983.



\bibitem{KKS23}
V.~Kaarnioja, F.~Y.~Kuo, and I.~H.~Sloan,
\emph{Lattice-based kernel approximation and serendipitous weights for parametric PDEs in very high dimensions}, to appear in: A.~Hinrichs, P.~Kritzer, F.~Pillichshammer (eds.) Monte Carlo and Quasi-Monte Carlo Methods 2022, Springer Verlag.


\bibitem{KarKor05}
J.~Kar\'atson, S.~Korotov,
\emph{Discrete maximum principles for finite element solution of nonlinear elliptic problems with mixed boundary conditions},
Numer.~Math.~\textbf{99} (2005), 669--698.



\bibitem{Kaz19}
Y.~Kazashi,
\emph{Quasi-Monte Carlo integration with product weights for elliptic PDEs with log-normal coefficients},
IMA J.~Numer.~Anal.\ \textbf{39} (2019), 1563--1593.

\bibitem{KSS12}
F.~Y.~Kuo, C.~Schwab, I.~H.~Sloan,
\emph{Quasi-Monte Carlo finite element methods for a class of elliptic partial differential equations with random coefficients}, SIAM J.~Numer.~Anal.~\textbf{50} (2012), 3351--3374.

\bibitem{KSWW10}
F.~Y.~Kuo, I.~H.~Sloan, G.~W.~Wasilkowski and B.~J.~Waterhouse,
\emph{Randomly shifted lattice rules with the optimal rate of convergence for unbounded integrands}, J.\ Complexity\ \textbf{26} (2010), 135--160.

\bibitem{NK14}
J.~A.~Nichols and F.~Y.~Kuo,
    \emph{Fast CBC construction of
    randomly shifted lattice rules achieving $O(n^{-1+\delta})$
    convergence for unbounded integrands over $\R^d$ in weighted spaces
    with POD weights},  J.\ Complexity\ \textbf{30} (2014), 444--468.

\bibitem{fenics2}
A.~Logg, K.~A.~Mardal, G.~N.~Wells et al., \emph{Automated Solution of Differential Equations by the Finite Element Method}, Springer (2012). doi: 10.1007/978-3-642-23099-8

\bibitem{fenics3}
A.~Logg and G.~N.~Wells, \emph{DOLFIN: Automated Finite Element Computing}, ACM Transactions on Mathematical Software 37 (2010). doi: 10.1145/1731022.1731030

\bibitem{fenics4}
A.~Logg, G.~N.~Wells and J.~Hake, \emph{DOLFIN: a C++/Python Finite Element Library}, in: A. Logg, K.~A.~Mardal and G. N. Wells (eds) Automated Solution of Differential Equations by the Finite Element Method (chapter 10), volume 84 of Lecture Notes in Computational Science and Engineering, Springer (2012).

\bibitem{LecLem00} P.~L'Ecuyer, C.~Lemieux,
\emph{Variance reduction via lattice rules}, Manag.~Sci.\ \textbf{46} (2000), 1214--1235.

\bibitem{LecPucBAb22} P.~L'Ecuyer, F.~Puchhammer, A.~Ben Abdellah,
    \emph{Monte Carlo and quasi-Monte Carlo density estimation via
    conditioning}, INFORMS J.~Comput.\ \textbf{34}, (2022), 1729--1748.

\bibitem{LiuOw23}
S.~Liu, A.~B.~Owen,
\emph{Preintegration via active subspaces},
SIAM J.~Numer.~Anal.\ \textbf{61} (2023), 495--514.

\bibitem{Katana} 
PVC (Research Infrastructure), UNSW Sydney, \emph{Katana,} (2010). doi: 10.26190/669X-A286


\bibitem{TSGU13}
A.~L.~Teckentrup, R.~Scheichl, M.~B.~Giles, and E.~Ullmann,
\emph{Further analysis of multilevel Monte Carlo methods for elliptic PDEs with random
coefficients},
Numer.~Math.\ \textbf{125} (2013), 569--600.

\bibitem{WWH17}
 C.~Weng, X.~Wang, Z.~He,
 \emph{Efficient computation of option prices and greeks by quasi-Monte Carlo
 method with smoothing and dimension reduction},
 SIAM J.\,Sci.\,Comput.\,\textbf{39} (2017), B298--B322.

\end{thebibliography}

{\small
	
}

\end{document}